\documentclass[12pt,leqno]{amsart}
\usepackage{amssymb}
\usepackage{t1enc}
\usepackage[mathscr]{eucal}
\usepackage{parskip}
\usepackage{xcolor}
\usepackage{tikz}
\usepackage{tikz-cd}
\usepackage{hyperref}
\numberwithin{equation}{section}
\usepackage[margin=2.9cm]{geometry}
\usepackage{booktabs}
\usepackage{enumerate}
\usepackage{bm}
\usepackage{xfrac}

\usepackage{listings}
\lstset{
  basicstyle=\ttfamily,
  mathescape,
  xleftmargin=.4in
}

\DeclareFontFamily{U}{mathb}{\hyphenchar\font45}
\DeclareFontShape{U}{mathb}{m}{n}{
  <-5.5> mathb5
  <5.5-6.5> mathb6
  <6.5-7.5> mathb7
  <7.5-8.5> mathb8
  <8.5-9.5> mathb9
  <9.5-11> mathb10
  <11-> mathb12
}{}
\DeclareSymbolFont{mathb}{U}{mathb}{m}{n}
\DeclareFontSubstitution{U}{mathb}{m}{n}
\DeclareMathSymbol{\bigast}        {\mathop}{mathb}{"06}

\DeclareFontFamily{U}{mathx}{\hyphenchar\font45}
\DeclareFontShape{U}{mathx}{m}{n}{
<-6> mathx5 <6-7> mathx6 <7-8> matha7
<8-9> mathx8 <9-10> mathx9
<10-12> mathx10 <12-> mathx12
}{}
\DeclareSymbolFont{mathx}{U}{mathx}{m}{n}
\DeclareMathSymbol{\bigtimes}{\mathop}{mathx}{"91}

\theoremstyle{plain}
\newtheorem{thm}{Theorem}[section]
\newtheorem{lem}[thm]{Lemma}
\newtheorem{cor}[thm]{Corollary}
\newtheorem{prop}[thm]{Proposition}

 \theoremstyle{definition}
\newtheorem{defn}[thm]{Definition}
\newtheorem{rem}[thm]{Remark}
\newtheorem{ex}[thm]{Example}
\newtheorem{notn}[thm]{Notation}
\newtheorem{setup}[thm]{Setup}

\newcommand{\mb}[1]{\mathbb{#1}}
\newcommand{\mc}[1]{\mathcal{#1}}

\newcommand{\mr}[1]{\mathrm{#1}}

\newcommand{\vphi}{\varphi}
\newcommand{\Spec}{\operatorname{Spec}}
\newcommand{\GW}{\mathrm{GW}}

\newcommand{\rank}{\operatorname{rank}}
\newcommand{\disc}{\operatorname{disc}}

\newcommand{\nplus}{\oplus^\mathrm{N}}
\newcommand{\colim}{\operatorname{colim}}

\newcommand{\Th}{\mathrm{Th}}
\newcommand{\Sm}{\mathrm{Sm}}
\newcommand{\cv}{\curlyvee}
\newcommand{\Kmw}{\bm{\mathrm{K}}^{\mathrm{MW}}}
\renewcommand{\P}{\mathbb{P}}
\newcommand{\G}{\mathbb{G}}
\newcommand{\A}{\mathbb{A}}

\renewcommand{\H}{\mathcal{H}_{\bullet}}

\newcommand{\Bez}{\mathrm{B{\acute e}z}}

\newcommand{\commu}{\mathrm{com}}
\newcommand{\mon}{\mathrm{mon}}

\newcommand{\Nwt}{\mathrm{Nwt}}

\newcommand{\slant}{\mathrm{slant}}

\mathchardef\mhyphen="2D
\newcommand{\tnil}{\mathrm{2\mhyphen nil}}

\newcommand{\alg}{\mathrm{alg}}
\newcommand{\ab}{\mathrm{ab}}
\newcommand{\Res}{\operatorname{Res}}
\newcommand{\D}{\mathfrak{D}}
\newcommand{\Jac}{\operatorname{Jac}}
\newcommand{\red}{\mathrm{red}}
\newcommand{\pA}{\pi_1^{\mathbb{A}^1}}

\makeatletter
\@namedef{subjclassname@2020}{\textup{2020} Mathematics Subject Classification}
\makeatother

\begin{document}
\title{Motivic configurations on the line}

\author[Igieobo]{John Igieobo}
\address{Georgia Institute of Technology} 
\email{oririigieobo@gmail.com}

\author[McKean]{Stephen McKean}
\address{Department of Mathematics \\ Brigham Young University} 
\email{mckean@math.byu.edu}
\urladdr{shmckean.github.io}

\author[Sanchez]{Steven Sanchez}
\address{Georgia State University} 
\email{steve122900@gmail.com}

\author[Taylor]{Dae'Shawn Taylor}
\address{Georgia Institute of Technology} 
\email{daeshawntaylor@gmail.com}

\author[Wickelgren]{Kirsten Wickelgren}
\address{Department of Mathematics \\ Duke University}
\email{kirsten.wickelgren@duke.edu}
\urladdr{services.math.duke.edu/~kgw}

\subjclass[2020]{14F42, 55P35}

\begin{abstract}
For each configuration of rational points on the affine line, we define an operation on the group of unstable motivic homotopy classes of endomorphisms of the projective line. We also derive an algebraic formula for the image of such an operation under Cazanave and Morel's unstable degree map, which is valued in an extension of the Grothendieck--Witt group. In contrast to the topological setting, these operations depend on the choice of configuration of points via a discriminant. We prove this by first showing a local-to-global formula for the global unstable degree as a modified sum of local terms. We then use an anabelian argument to generalize from the case of local degrees of a global rational function to the case of an arbitrary collection of endomorphisms of the projective line.
\end{abstract}

\maketitle

\section{Introduction}\label{sec:introduction}
In topology, May's recognition principle characterizes loop spaces as algebras over the little cubes operad \cite{May72}, which is defined by operations coming from configuration spaces of Euclidean space. An analog of May's recognition principle for $\mb{P}^1$-loop spaces in unstable motivic homotopy theory has been sought for the last quarter century. We offer some thoughts on this question by defining a family of operations $\sum_D$ on the $\mb{P}^1$-loop space $\Omega_{\mb{P}^1}\mb{P}^1$. We construct these operations in terms of the configuration space of rational points in the affine line --- indeed, the subscript $D$ refers to such a configuration of points. In contrast to the topological setting, the homotopy classes of these operations depend on the set of points $D$ via a sort of discriminant.

Let $k$ be a field, and let $D=\{r_1,\ldots,r_n\}$ be a subset of $\mb{A}^1_k(k)$ with $r_i\neq r_j$ for $i\neq j$. We define the \emph{$D$-pinch map} (see Definition~\ref{def:D-pinch}) as the composite
\[\cv_D:\mb{P}^1_k\xrightarrow{c_D}\frac{\mb{P}^1_k}{\mb{P}^1_k-D}\xrightarrow{\cong}\bigvee_{i=1}^n\frac{\mb{P}^1_k}{\mb{P}^1_k-\{r_i\}}\xleftarrow{\simeq}\bigvee_{i=1}^n\mb{P}^1_k,\]
where $c_D$ is the collapse map induced by the inclusion $\mb{P}^1_k-D\hookrightarrow\mb{P}^1_k$, the second map is a canonical isomorphism of motivic spaces resulting from purity, and the last equivalence is a wedge of collapse maps coming from the inclusions $\mb{P}^1_k-\{r_i\}\hookrightarrow\mb{P}^1_k$. For endomorphisms $f_1,\ldots,f_n:\mb{P}^1_k\to\mb{P}^1_k$ in the unstable motivic homotopy category, we define the \emph{$D$-sum} (see Definition~\ref{def:D-sum}) to be
\[\sum_D(f_1,\ldots,f_n):=\vee_i f_i\circ\cv_D.\]

Note that $\sum_D(f_1,\ldots,f_n)$ is again an endomorphism of the motivic space $\mb{P}^1_k$. Morel proved that such endomorphisms can be understood in terms of quadratic forms: he defined an analog of the Brouwer degree~\cite{Mor06}, which is a ring homomorphism from the ring of endomorphisms of the sphere $S^n\wedge\mb{G}_m^{\wedge n}\simeq\mb{P}^n_k/\mb{P}^{n-1}_k$ to the Grothendieck--Witt ring $\GW(k)$ of isomorphism classes of non-degenerate symmetric bilinear forms over a field $k$. In dimensions 2 and greater, Morel's degree map is an isomorphism. In dimension 1, the degree is surjective but not injective. Morel~\cite[Theorem 7.36]{Mor12} also computed
\begin{align}\label{eq:cazanave}
    [\mb{P}^1_k,\mb{P}^1_k]\cong\GW(k)\times_{k^\times/(k^\times)^2}k^\times,
\end{align}
and Cazanave~\cite{Caz12} gave an explicit formula for this isomorphism. Let $\GW^u(k):=\GW(k)\times_{k^\times/(k^\times)^2}k^\times$, which we call the \textit{unstable Grothendieck--Witt group}. Let
\[\deg^u:[\mb{P}^1_k,\mb{P}^1_k]\to\GW^u(k)\]
denote the \emph{unstable degree}. Our main theorem is a characterization of the $D$-sum in terms of its image under $\deg^u$.

\begin{thm}\label{thm:D-sum-all-f_i}
Let $D=\{r_1,\ldots,r_n\}\subset\mb{A}^1_k(k)$. For any unstable pointed $\mb{A}^1$-homotopy classes of maps $f_1,\ldots,f_n\in[\mathbb{P}^1,\mathbb{P}^1]$, we have 
\[
\deg^u (\sum_D (f_1,\ldots, f_n)) = \big(\bigoplus_{i=1}^n\beta_i,\prod_{i=1}^n d_i\cdot\prod_{i<j}(r_i-r_j)^{2m_im_j}\big),
\] where $(\beta_i, d_i)=\deg^u(f_i)$ and $m_i = \rank \deg^u(f_i)$ for each $i$.
\end{thm}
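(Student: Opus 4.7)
The plan is to prove the theorem in two stages, following the outline in the abstract. First, I would establish a local-to-global formula that computes the unstable degree of a genuine rational endomorphism as a modified sum of local contributions. Second, I would bootstrap this special case to arbitrary tuples of endomorphisms via an anabelian argument.

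For the local-to-global stage, let $f:\mb{P}^1_k \to \mb{P}^1_k$ be a rational endomorphism pointed at $\infty$ whose finite preimage of the basepoint is exactly $D = \{r_1,\ldots,r_n\}$. The collapse map $c_D$ factors $f$ through $\mb{P}^1_k/(\mb{P}^1_k - D)$, and purity decomposes this as a wedge of local pieces. Each wedge factor is an endomorphism $f_i$ capturing the behavior of $f$ near $r_i$, and unwinding the definitions shows $f \simeq \sum_D(f_1,\ldots,f_n)$ in $\H$. Cazanave's explicit formula then computes $\deg^u(f)$ from the B\'ezoutian associated to the polynomial data of $f$. Matching this against the local B\'ezoutians of the individual $f_i$ requires accounting for a Vandermonde-style resultant contribution, which is precisely where the factor $\prod_{i<j}(r_i-r_j)^{2m_im_j}$ appears; a direct calculation should then confirm the claimed formula in this special case.

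For the anabelian stage, I would remove the assumption that the $f_i$ come from a single global function. Since $\deg^u:[\mb{P}^1_k,\mb{P}^1_k] \to \GW^u(k)$ is a bijection, each $f_i$ is determined by $(\beta_i,d_i)$, so it suffices to exhibit, for any prescribed tuple $(\beta_i,d_i)$, a global rational function with finite fiber exactly $D$ whose local factors at $r_i$ realize $(\beta_i,d_i)$. Once such a realization is produced, the first stage applies on the nose and yields the theorem. The principal obstacle I foresee is exactly this realization: constructing, for arbitrary $(\beta_i,d_i)$ at prescribed $r_i$, a global rational endomorphism with the desired local invariants. This is where the anabelian flavor enters, likely via $\pA$-style gluing of compatible local models into a global one, together with a fine analysis of Cazanave's presentation of $[\mb{P}^1_k,\mb{P}^1_k]$ to ensure that the discrete invariants $(\beta_i,d_i,r_i,m_i)$ alone determine both sides of the claimed formula.
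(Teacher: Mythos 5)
Your first stage is essentially the paper's first step (collapse/purity decomposition giving $f\simeq\sum_D$ of its local pieces, then the monomial-to-Newton change of basis whose squared determinant is $\prod_{i<j}(r_i-r_j)^{2m_im_j}$), and that part is fine in outline. The gap is in your second stage. You reduce the general case to the claim that for \emph{any} prescribed tuple $(\beta_i,d_i)\in\GW^u(k)^n$ there is a global pointed rational function with vanishing locus $D$ whose unstable local degrees at the $r_i$ are exactly $(\beta_i,d_i)$. This realization is impossible in general, for two reasons. First, $[\P^1_k,\P^1_k]\cong\GW^u(k)$ contains classes of rank zero and of negative rank, while the unstable local degree of a rational function at a root of multiplicity $m$ always has rank $m\geq 1$; so no tuple containing a non-positive-rank class is realizable. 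Second, even among positive-rank classes the local degree at a rational point is represented by the anti-triangular Newton matrix of Definition~\ref{def:newton matrix}, which constrains it: for instance every rank-two local degree has Gram matrix $\left(\begin{smallmatrix} A_1 & A_2\\ A_2 & 0\end{smallmatrix}\right)$ and hence discriminant $-1$, so a class such as $(\langle 1\rangle+\langle 1\rangle,\,1)$ over $k=\mb{R}$ is never a local degree (this is the restriction noted in the remark following Proposition~\ref{prop:local-to-global-rational-function}). No ``$\pA$-style gluing of local models'' can repair this, because the obstruction already lives in $\GW^u(k)$; the anabelian input cannot manufacture a rational function whose local invariants simply do not occur.

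What is actually needed — and what the paper's anabelian argument supplies — is a statement that the discrepancy between $\deg^u\circ\sum_D$ and the claimed algebraic formula depends only on the ranks $m_1,\ldots,m_n$, not on the classes $(\beta_i,d_i)$ themselves. Concretely, one factors $\pA(\vee_i f_i\circ\cv_D)$ through a $2$-nilpotent quotient, shows the difference of the two pinch maps lands in a central kernel $K$ generated by commutators (Lemmas~\ref{lem:Kr_central} and~\ref{lem:Deltac12_to_K}), and shows that $\overline{\pA(\vee f_i)}$ restricted to $K$ is controlled by the ranks via $\wp$ and the commutator pairing (Lemma~\ref{overlineveefi_on_K}); this yields Lemma~\ref{lem:key lemma}: one realization per rank-tuple suffices. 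For positive ranks the single realization is $\prod_i(x-r_i)^{m_i}$ (Corollary~\ref{cor:d=dalg for positive ranks}), and the non-positive-rank tuples — which, as above, admit no realization at all — are reached by a downward induction on the ranks using the difference-of-differences identity of Lemma~\ref{lem:f_i0+1Delta-fDelta}. Your proposal is missing both of these ingredients: the rank-only dependence that replaces realization of arbitrary $(\beta_i,d_i)$, and the induction that handles ranks $\leq 0$.
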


The proof of Theorem~\ref{thm:D-sum-all-f_i} proceeds in two steps. The first step is to give a local-to-global formula for the unstable $\mb{A}^1$-degree of a rational function. To this end, we develop an unstable analog of the local $\mb{A}^1$-degree \cite{KW19} and apply algebraic methods due to Cazanave \cite{Caz12}. As a result, we find that Theorem~\ref{thm:D-sum-all-f_i} holds when $f_1,\ldots,f_n$ represent the unstable local degrees of a rational function whose vanishing locus is $\{r_1,\ldots,r_n\}$.

\begin{thm}\label{thm:D-sum-local}
Let $f/g$ be a pointed rational function with vanishing locus $\{r_1,\ldots,r_n\}\subset\mb{A}^1_k(k)$. For each $i$, let $\deg^u_{r_i}(f/g)=(\beta_i,d_i)$ and $\rank\beta_i=m_i$. Then
\begin{equation}\label{eq:intro-ph-formula}
\deg^u(f/g)=\big(\bigoplus_{i=1}^n\beta_i,\prod_{i=1}^n d_i\cdot\prod_{i<j}(r_i-r_j)^{2m_im_j}\big).
\end{equation}
\end{thm}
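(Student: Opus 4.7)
The plan is to leverage Cazanave's algebraic description of the unstable degree and to unpack how both its Bezoutian part and its $k^\times$-lift behave under the Chinese Remainder decomposition induced by the roots $r_1,\ldots,r_n$.

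First, by Cazanave's formula, $\deg^u(f/g)$ is represented by the pair $(\Bez(f,g), r(f,g))$, where $\Bez(f,g)$ is the Bezoutian bilinear form on $k[x]/(f)$ and $r(f,g)\in k^\times$ is a canonical leading-coefficient lift whose class in $k^\times/(k^\times)^2$ agrees with the discriminant of $\Bez(f,g)$. I will use the analogous local construction (developed earlier in the paper) for $\deg^u_{r_i}(f/g)$: writing $f = a\prod_i f_i$ with $f_i = (x-r_i)^{m_i}$ the local factor of $f$ at $r_i$, the local Bezoutian $\beta_i$ lives on $k[x]/(f_i)$ and $d_i$ is the corresponding local leading-coefficient lift.

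Second, I would establish the Grothendieck--Witt half of the formula. The Chinese Remainder isomorphism $k[x]/(f)\cong\bigoplus_{i=1}^n k[x]/(f_i)$ is well-defined since the $(f_i)$ are pairwise coprime, and a direct computation shows that the global Bezoutian pulls back to the orthogonal direct sum of the local Bezoutians on this decomposition, modulo a change of basis. The determinant of this change of basis is, up to a square in $k^\times$, the Vandermonde-type product $\prod_{i<j}(r_i-r_j)^{m_im_j}$, which is the product of pairwise resultants $\Res(f_i,f_j)$. Since a change of basis is invisible in $\GW(k)$, this yields $\Bez(f,g)=\bigoplus_i\beta_i$ in $\GW(k)$, giving the first component of the claimed formula.

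Third, I would track the $k^\times$ component. Here the change of basis above is no longer invisible: its determinant contributes its \emph{square} to the discrepancy between $r(f,g)$ and the naive product $\prod_i d_i$, precisely because $r(f,g)$ records the bilinear form, not just its Witt class. Unraveling Cazanave's normalization of the leading-coefficient lift through the CRT isomorphism and comparing to the local normalizations that define each $d_i$ should yield
\[
r(f,g) = \prod_{i=1}^n d_i \cdot \prod_{i<j}(r_i-r_j)^{2m_im_j}.
\]
Because the correction factor $\prod_{i<j}(r_i-r_j)^{2m_im_j}$ is manifestly a square in $k^\times$, the resulting pair is well-defined in $\GW^u(k)$, consistent with the second step.

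The main obstacle is the bookkeeping in the third step. The $k^\times$-lift is highly sensitive to normalization choices, and matching Cazanave's global convention with the local unstable degree requires carefully tracking how the Vandermonde Jacobian squared emerges from passing between a basis of $k[x]/(f)$ adapted to the global polynomial $f$ and the product basis adapted to the decomposition $\bigoplus_i k[x]/(f_i)$. The GW-level statement in the second step, while requiring a careful setup of the CRT-compatibility of the Bezoutian pairing, is then a standard exercise in linear algebra.
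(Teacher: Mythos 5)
Your proposal is correct and takes essentially the same route as the paper: the paper likewise block-diagonalizes Cazanave's B\'ezoutian by passing from the monomial basis to the partial-fraction (Newton) basis adapted to the decomposition of $k[x]/(f)$ at the roots, identifies the blocks with the local degrees $(\beta_i,d_i)$ via the local Newton-matrix formula (Lemma~\ref{lem:degree=newton matrix}), and produces the correction term as the square of the change-of-basis determinant, computed exactly in Theorem~\ref{thm:duplicant} to be $\pm\prod_{i<j}(r_i-r_j)^{m_im_j}$. The one point to be careful about is that for the $k^\times$-component you need this determinant on the nose (up to sign), not merely ``up to a square'' as phrased in your second step --- but your identification of it with $\prod_{i<j}\Res\big((x-r_i)^{m_i},(x-r_j)^{m_j}\big)$ supplies exactly that, so the bookkeeping you flag goes through as in the paper's proof of Proposition~\ref{prop:local-to-global-rational-function}.
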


Theorem~\ref{thm:D-sum-local} will serve as the base case of an induction argument for Theorem~\ref{thm:D-sum-all-f_i}. While carrying out this first step, we prove a few results of some independent interest; we will mention these momentarily. 

The second step to proving Theorem~\ref{thm:D-sum-all-f_i} is an inductive argument that uses results of Morel on the fundamental group sheaf $\pA(\mb{P}^1)$. Morel showed that $\mb{P}^1$ is \emph{anabelian} in $\mb{A}^1$-homotopy theory \cite[Remark~7.32]{Mor12}, in the sense that the $\mb{A}^1$-fundamental group yields a group isomorphism
\begin{equation}\label{MorelThmP1Anabelian}
[\mb{P}^1_k,\mb{P}^1_k]\cong\mr{End}(\pA(\mb{P}^1_k)).\end{equation}
Here, we borrow the term \emph{anabelian} from Grothendieck's anabelian program on the \'etale fundamental group \cite{Gro97}. Morel's anabelian theorem implies that for Theorem~\ref{thm:D-sum-all-f_i}, it suffices to prove the analogous result after applying $\pA$.

The map $\pA(\cv_D)$ has target $\pA(\bigvee_{i=1}^n \P^1)$, which Morel shows is the initial strongly $\A^1$-invariant sheaf on the free product. While maps to this sheaf seem to us to be difficult to control, Morel shows that $\pA(\P^1)$ is $2$-nilpotent. Thus, we are free to pass to various sorts of $2$-nilpotent quotients of $\pA(\vee_{i=1}^n \P^1)$ while computing $\pA(\vee f_i \circ \cv_D)$. There are subtleties involving $\A^1$-invariance and nilpotence (c.f.~\cite{Asok_Bac_Hop24}). 

We define a $2$-nilpotent quotient for our purposes (Lemma \ref{lem:Kr_central}, Proposition \ref{prop:veefic_factors_2_nil}, and Remark \ref{rem:2nil_quotient_free}) and then define a homomorphism to a central kernel associated to a sort of ``difference" between two pinch maps (Lemma  \ref{lem:Deltac12_to_K}). We show this difference composed with $\pA(\vee_i f_i)$ is controlled by the ranks of the $\A^1$-degrees of the $f_i$ (Lemma \ref{overlineveefi_on_K}). Call these ranks the integer valued degrees of the $f_i$. This is used to show that a single example where Theorem \ref{thm:D-sum-all-f_i} holds for an $n$-tuple of endomorphisms with a given $n$-tuple of integer valued degrees implies the theorem for all endomorphisms of $\P^1$ with those integer valued degrees (Lemma~\ref{lem:key lemma}). This establishes the theorem for all $n$-tuples of endomorphisms of $\P^1$ with positive integer valued degrees (Corollary \ref{cor:d=dalg for positive ranks}). We then show a certain differences of differences is independent of one of the integer valued degrees (Lemma \ref{lem:f_i0+1Delta-fDelta}). This allows a downward induction which proves Theorem~\ref{thm:D-sum-all-f_i}.

As previously mentioned, the first step of our proof of Theorem~\ref{thm:D-sum-all-f_i} involves defining the \emph{unstable local $\mb{A}^1$-degree}.

\begin{defn}
    Let $f:\mb{P}^1_k\to\mb{P}^1_k$ be a pointed rational map. If $x$ is a closed point such that $f(x)=0$, then the \emph{unstable local degree} of $f$ at $x$ is the image $\deg^u_x(f)\in\GW^u(k)$ of the map
    \[\mb{P}^1_k\xrightarrow{c_x}\frac{\mb{P}^1_k}{\mb{P}^1_k-\{x\}}\xrightarrow{\overline{f}_x}\frac{\mb{P}^1_k}{\mb{P}^1_k-\{0\}}\xrightarrow{c_0^{-1}}\mb{P}^1_k.\]
\end{defn}

Here, $c_x$ denotes the collapse map, which is a weak equivalence when $x$ is $k$-rational. The last equivalence is given by inverse of the weak equivalence $c_0$. The map $\overline{f}_x$ is induced by $f$. Theorem~\ref{thm:D-sum-local} can be thought of as a Poincar\'e--Hopf theorem relating the global unstable degree to its local counterparts. We also give an explicit formula for the unstable local degree at rational points in terms of a ``higher residue.''

\begin{thm}
    Let $f/g$ be a pointed rational function. Let $r\in\mb{A}^1_k(k)$ be a root of $f$ of multiplicity $m$. Then there exists $a\in k^\times$ such that
    \[\frac{g(x)}{f(x)}=\frac{a}{(x-r)^m}+\sum_{i>-m}a_i(x-r)^i,\]
    and we have
    \[\deg^u_r(f)=\begin{cases}
        (\frac{m}{2}\mb{H},a^m) & m\equiv 0\mod 4,\\
        (\langle a\rangle+\frac{m-1}{2}\mb{H},a^m) & m\equiv 1\mod 4,\\
        (\frac{m}{2}\mb{H},-a^m) & m\equiv 2\mod 4,\\
        (\langle a\rangle+\frac{m-1}{2}\mb{H},-a^m) & m\equiv 3\mod 4.
    \end{cases}\]
    In other words, we have $\deg^u_r(f)=(A,\det{A})$, where we conflate the matrix
    \[A=\underbrace{\begin{pmatrix}
        * & * & \cdots & * & a\\
        * & * & \cdots & a & 0\\
        \vdots & \vdots & \text{\reflectbox{$\ddots$}} & \vdots & \vdots \\
        * & a & \cdots & 0 & 0\\
        a & 0 & \cdots & 0 & 0
    \end{pmatrix}}_{m\times m}\]
    with the isomorphism class of its associated bilinear form.
\end{thm}

\subsection{Outline}
We review some relevant terminology and notation in Section~\ref{sec:notation}. In Section~\ref{sec:local degree}, we define the unstable local $\mb{A}^1$-degree and derive an algebraic formula for it under nice hypotheses. In Section~\ref{sec:ltg homotopically}, we define the $D$-sum $\sum_D$ and prove that the unstable $\mb{A}^1$-degree satisfies a local-to-global principle with respect to $\sum_D$.

We take a slight detour in Section~\ref{sec:duplicants}, where we define a generalization of the polynomial discriminant (which we call the \emph{duplicant}). Code supporting our analysis of duplicants can be found in Appendix~\ref{sec:code}. Our aside on duplicants is utilized in Section~\ref{sec:algebraic local-to-global}, where we prove Theorem~\ref{thm:D-sum-local} (as Proposition~\ref{prop:local-to-global-rational-function}). Most of the techniques for this proof boil down to (somewhat involved) linear algebra. 

In Section~\ref{sec:general case}, we prove Theorem~\ref{thm:D-sum-all-f_i}, by inductively showing induced maps on $\pA$ are equal with Theorem~\ref{thm:D-sum-local} as the base case.

\subsection*{Acknowledgments}
We heartily thank Fabien Morel for useful discussions. We are also grateful to the anonymous referee for their careful reading and thorough feedback. JI, SS, and DT received support from an REU supplement to NSF DMS-2001890 and an REU supplement to NSF DMS-2103838. SM received support from NSF DMS-2202825. KW received support from NSF DMS Career DMS-2001890 (original number Career 1552730), NSF DMS-2103838 and NSF DMS-2405191.

\section{Terminology and notation}\label{sec:notation}
We will frequently work with \emph{pointed rational maps}, which are rational functions $f:\mb{P}^1_k\to\mb{P}^1_k$ such that $f(\infty)=\infty$. We will denote the unstable motivic homotopy category of pointed spaces over a field $k$ by $\mc{H}_\bullet(k)$. Given two pointed motivic spaces $X$ and $Y$, we denote the set of pointed $\mb{A}^1$-homotopy classes of maps $X\to Y$ by $[X,Y]$. We will really only need to consider the case of $X=Y=\mb{P}^1_k$.

\subsection{Unstable Grothendieck--Witt groups}\label{sec:unstable gw}
Define the unstable Grothendieck--Witt group
\[\GW^u(k):=\GW(k)\times_{k^\times/(k^\times)^2}k^\times.\]
We refer to the $\GW(k)$ and $k^\times$ factors of $\GW^u(k)$ as the \textit{stable} and \textit{unstable parts}, respectively. The group structure on $\GW^u(k)$ is given by $(\beta_1,d_1)+(\beta_2,d_2)=(\beta_1+\beta_2,d_1d_2)$ (or in words, by taking direct sums of the stable parts and multiplying the unstable parts). 

Equivalently, the unstable Grothendieck--Witt group $\GW^u(k)$ is the group completion of isomorphism classes of pairs $(\beta, (b_1,\ldots,b_n))$ where $\beta$ is a nondegenerate, symmetric bilinear form on a $k$-vector space with basis $(b_1,\ldots,b_n)$, and where an isomorphism is a linear isomorphism preserving the inner product and with determinant one in the given basis. This alternative definition is due to Lannes \cite[Remark~7.37]{Mor12}, but we could not find a proof in the literature and so provide one ourselves.

\begin{prop}
   Let $M(k)$ denote the monoid of isomorphism classes of pairs
   \[(\beta,(b_1,\ldots,b_n))\]
   as described above. Let $G(k)$ denote the group completion of $M(k)$. Then there is an isomorphism $G(k)\cong\GW^u(k)$.
\end{prop}
\begin{proof}
    By our first definition of the unstable Grothendieck--Witt group, $\GW^u(k)$ is the pullback (in abelian groups) of the cospan
    \[\begin{tikzcd}
        & k^\times\arrow[d,"\mr{mod}\ 2"]\\
        \GW(k)\arrow[r,"\disc"] & k^\times/(k^\times)^2,
    \end{tikzcd}\]
    where $\mr{mod}\ 2:k^\times\to k^\times/(k^\times)^2$ means modulo squares. By the universal property of pullbacks, it therefore suffices to construct homomorphisms $\phi:G(k)\to k^\times$ and $\psi:G(k)\to\GW(k)$ such that $\disc\circ\psi=\mr{mod}\ 2\circ\phi$.

    Let $\phi$ be the map induced by the Gram determinant map on $M(k)$. Explicitly, 
    \[\phi(\beta,(b_1,\ldots,b_n))=\det{B(\beta,(b_1,\ldots,b_n))},\]
    where $B(\beta,(b_1,\ldots,b_n))$ denotes the Gram matrix of the bilinear form $\beta$ in the basis $(b_1,\ldots,b_n)$. Note that $\phi$ is a well-defined function, as any linear isomorphism preserving the inner product with determinant one will preserve the Gram determinant. The monoid structure on $M(k)$ is given by direct sum. That is, if $(\beta,(b_1,\ldots,b_n)),(\gamma,(c_1,\ldots,c_m))\in M(k)$, then $(\beta,(b_i))+(\gamma,(c_j))=(\beta\oplus\gamma,(b_i\oplus\bm{0}_m,\ldots,{0}_n\oplus c_j)$, where $\bm{0}_a$ denotes the 0 vector of dimension $a$. Thus $B((\beta,(b_i))+(\gamma,(c_j)))=B(\beta,(b_i))\oplus B(\gamma,(c_j))$, so $\phi$ preserves monoid structures because determinants are multiplicative over direct sums of matrices. Group completing $M(k)$ gives us that $\phi$ is a homomorphism.

    Let $\psi$ denote the forgetful map. Explicitly, $\psi(\beta,(b_1,\ldots,b_n))=\beta$ on $M(k)$. Clearly, $\psi$ is a well-defined function that preserves units (sending the trivial bilinear form on the unique $0$-dimensional $k$-vector space with empty basis to $0\in\GW(k)$). To see that $\psi$ is a homomorphism, we simply observe that a direct sum of bilinear forms up to determinant one linear isomorphisms preserving the inner product will again be a direct sum up to linear isomorphism.

    Finally, $\mr{mod}\ 2\circ\phi$ is the Gram determinant modulo squares, which is precisely the definition of the discriminant of non-degenerate, symmetric bilinear forms. Thus $\mr{mod}\ 2\circ\phi=\disc\circ\psi$, as desired.
\end{proof}

We wish to describe $\GW^u(k)$ in terms of generators and relations. To this end, we recall the usual presentation of $\GW(k)$ \cite[9.4.~Corollary]{Sch85}.

\begin{prop}\label{prop:presentation of GW}
Let $k$ be a field. Given $a\in k^\times$, let $\langle a\rangle$ be the isomorphism class of the bilinear form $(x,y)\mapsto axy$. As a group, $\GW(k)$ is isomorphic to the abelian group generated by $\{\langle a\rangle:a\in k^\times\}$ modulo the following relations:
\begin{enumerate}[(i)]
\item $\langle ab^2\rangle=\langle a\rangle$ for all $a,b\in k^\times$.
\item $\langle a\rangle+\langle b\rangle=\langle a+b\rangle+\langle ab(a+b)\rangle$ for all $a,b\in k^\times$ such that $a+b\neq 0$.
\end{enumerate}
\end{prop}

\begin{rem}
Relations (i) and (ii) imply that
\[\langle a\rangle+\langle-a\rangle=\langle 1\rangle+\langle -1\rangle\]
for all $a\in k^\times$ (see e.g.~\cite[p.~30]{DGGM23}). Because of its ubiquity, we define the \textit{hyperbolic form} $\mb{H}:=\langle 1\rangle+\langle -1\rangle$.
\end{rem}

\begin{rem}
The Grothendieck--Witt group $\GW(k)$ can be given a ring structure by imposing the relation $\langle a\rangle\langle b\rangle=\langle ab\rangle$, corresponding to the tensor product of bilinear forms, but we will not need the ring structure in this article.
\end{rem}

Following the stable case, we can give a presentation of the unstable Grothendieck--Witt group in terms of generators and relations.

\begin{prop}\label{prop:presentation of GWu}
Let $k$ be a field. Given $a\in k^\times$, let $\langle a\rangle^u:=(\langle a\rangle,a)\in\GW^u(k)$. As a group, $\GW^u(k)$ is isomorphic to the abelian group $G(k)$ generated by $\{\langle a\rangle^u:a\in k^\times\}$ modulo the following relations:
\begin{enumerate}[(i')]
\item $\langle ab^2\rangle^u=\langle a\rangle^u+\langle b\rangle^u-\langle 1/b\rangle^u$ for all $a,b\in k^\times$.
\item $\langle a\rangle^u+\langle b\rangle^u=\langle 1/(a+b)\rangle^u+\langle ab(a+b)\rangle^u$ for all $a,b\in k^\times$ such that $a+b\neq 0$.
\end{enumerate}
\end{prop}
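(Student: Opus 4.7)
The plan is to check that both relations hold in $\GW^u(k)$, identify a ``square class correction'' element $\delta(c) := \langle c \rangle^u - \langle 1/c \rangle^u$ internal to the abstract group defined by the presentation, and then show that quotienting by these corrections recovers the presentation of $\GW(k)$ from Proposition \ref{prop:presentation of GW}.

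First I would verify that (i') and (ii') hold in $\GW^u(k)$ by checking stable and unstable parts separately. For (i'), the stable parts agree using $\langle b \rangle = \langle 1/b \rangle$ in $\GW(k)$, and both unstable parts equal $ab^2$. For (ii'), the stable parts reduce to relation (ii) using $\langle 1/(a+b) \rangle = \langle a+b \rangle$, and both unstable parts equal $ab$. This produces a well-defined homomorphism $\phi\colon G \to \GW^u(k)$, where $G$ is the group defined by the presentation. Surjectivity is straightforward: given $(\beta,d) \in \GW^u(k)$, write $\beta = \sum \eps_i \langle a_i \rangle$ and $d = (\prod a_i^{\eps_i})\cdot c^2$ for some $c \in k^\times$, and observe that $\sum \eps_i \langle a_i \rangle^u + \langle c \rangle^u - \langle 1/c \rangle^u$ maps to $(\beta,d)$.

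Next, I would study the homomorphism $\delta\colon k^\times \to G$ given by $\delta(c) := \langle c \rangle^u - \langle 1/c \rangle^u$. Relation (i') with $a=1$ gives $\delta(c) = \langle c^2\rangle^u - \langle 1\rangle^u$, and a second application of (i') with $a = c^2$ yields $\langle b^2 c^2\rangle^u = \langle c^2\rangle^u + \langle b^2\rangle^u - \langle 1\rangle^u$, whence $\delta(bc) = \delta(b) + \delta(c)$. So $\delta$ is a homomorphism, and clearly $\delta(-1) = 0$. Under $\phi$, the image $\delta(c)$ lands in $(k^\times)^2 \cong \ker(\GW^u(k) \to \GW(k))$ via $c \mapsto c^2$, so if $H := \delta(k^\times) \subset G$ then $\phi|_H \colon H \to (k^\times)^2$ is surjective, and it is also injective since the kernel of squaring is $\{\pm 1\}$.

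The main step is to identify $G/H$ with $\GW(k)$. Modulo $H$, relation (i') collapses to $\langle ab^2 \rangle^u = \langle a \rangle^u$, which is relation (i). For (ii'), the identity $\langle a+b \rangle^u - \langle 1/(a+b) \rangle^u = \delta(a+b) \in H$ (another instance of (i')) lets us rewrite (ii') as $\langle a \rangle^u + \langle b \rangle^u = \langle a+b \rangle^u + \langle ab(a+b) \rangle^u$, which is relation (ii). Since (i) and (ii) together imply (iii), by the remark following Proposition \ref{prop:presentation of GW}, the quotient $G/H$ satisfies the presentation of $\GW(k)$, hence $G/H \cong \GW(k)$. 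Applying the five lemma to the induced map between the short exact sequences $0 \to H \to G \to G/H \to 0$ and $0 \to (k^\times)^2 \to \GW^u(k) \to \GW(k) \to 0$ shows that $\phi$ is an isomorphism. The principal bookkeeping obstacle is the manipulation showing $\delta$ is additive, together with the reduction of (ii') modulo $H$; both boil down to one or two applications of (i').
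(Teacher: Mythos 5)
Your proposal is correct, and it is worth noting how it relates to the paper's argument. Both proofs share the same two easy steps: verifying that (i') and (ii') hold in $\GW^u(k)$ by comparing stable and unstable parts (using $\langle b\rangle=\langle 1/b\rangle$ and $\langle 1/(a+b)\rangle=\langle a+b\rangle$ in $\GW(k)$, and the identity $ab=\tfrac{1}{a+b}\cdot ab(a+b)$), and establishing surjectivity by writing $(\beta,d)$ as $\sum_i\eps_i\langle a_i\rangle^u+\langle c\rangle^u-\langle 1/c\rangle^u$. Where you genuinely diverge is in the completeness-of-relations (injectivity) direction: the paper dispatches this with the brief assertion that the unstable factor $k^\times$ imposes no relations, so only the stable relations of Proposition~\ref{prop:presentation of GW} are needed, whereas you make this precise by isolating the subgroup $H=\delta(k^\times)\subset G$ with $\delta(c)=\langle c\rangle^u-\langle 1/c\rangle^u$, proving via two applications of (i') that $\delta$ is additive with $\delta(\pm 1)=0$, checking that $\phi$ restricts to an isomorphism $H\xrightarrow{\;\cong\;}(k^\times)^2=\ker(\GW^u(k)\to\GW(k))$, identifying $G/H$ with the presentation (i),(ii) of $\GW(k)$ (noting, as in the remark after Proposition~\ref{prop:presentation of GW}, that (iii) is implied), and concluding with the short five lemma applied to the map of extensions. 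The one point you pass over quickly is that the five lemma needs the \emph{induced} map $G/H\to\GW(k)$ to be an isomorphism, not merely an abstract isomorphism $G/H\cong\GW(k)$; this is immediate since the induced map sends the class of $\langle a\rangle^u$ to $\langle a\rangle$, which is exactly the isomorphism furnished by Proposition~\ref{prop:presentation of GW}, but it deserves a sentence. Your route is longer but buys a self-contained verification of the part the paper leaves implicit, and it makes visible the extension $0\to(k^\times)^2\to\GW^u(k)\to\GW(k)\to 0$ underlying the unstable presentation.
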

\begin{proof}
By definition, each element of $\GW^u(k)$ is of the form $(\beta,d)$, where $\beta\in\GW(k)$ and $d\in k^\times$ such that $d\equiv \disc\beta\mod(k^\times)^2$. Writing $\beta=\sum_{i=1}^n\langle a_i\rangle-\sum_{j=1}^m\langle b_j\rangle$ in $\GW(k)$, we have $d=c^2(\prod_i a_i)(\prod_j b_j^{-1})$ for some $c\in k^\times$. Since
\begin{align*}
    \langle c\rangle^u-\langle 1/c\rangle^u&=(\langle c\rangle,c)-(\langle 1/c\rangle,1/c)\\
    &=(\langle c\rangle,c)-(\langle c\rangle,1/c)\\
    &=(0,c^2)
\end{align*}
by Proposition~\ref{prop:presentation of GW} (i), we have $(\beta,d)=\langle c\rangle^u-\langle 1/c\rangle^u+\sum_{i=1}^n\langle a_i\rangle^u-\sum_{j=1}^m\langle b_j\rangle^u$. That is, $\GW^u(k)$ is generated by elements of the form $\langle a\rangle^u$. 

There are no relations on $\GW^u(k)$ imposed by the unstable factor $k^\times$, so we only need the additive relations on the stable factor given in Proposition~\ref{prop:presentation of GW}. Relation (i') is precisely Proposition~\ref{prop:presentation of GW} (i) when restricted to elements of the form $\langle a\rangle^u$. For relation (ii'), we have $\langle 1/(a+b)\rangle=\langle a+b\rangle$ in $\GW(k)$. To verify that the unstable factors agree, we simply compute $ab=\frac{1}{a+b}\cdot ab(a+b)$.

So far, we have proved that $\GW^u(k)$ is generated by the symbols $\langle a\rangle^u$, and that the relations (i') and (ii') hold in $\GW^u(k)$. This implies that the function $\vphi:G(k)\to\GW^u(k)$, defined on generators by
\[\vphi(\langle a\rangle^u)=(\langle a\rangle,a),\]
is a surjective homomorphism. Moreover, $\vphi$ is injective. Indeed, if $\vphi(\langle a\rangle^u)=\vphi(\langle b\rangle^u)$, then $(\langle a\rangle,a)=(\langle b\rangle,b)$ in $\GW^u(k)$, and hence $a=b$ as elements of $k^\times$. This implies that $\langle a\rangle^u=\langle b\rangle^u$. We conclude that $\vphi$ is an isomorphism, as desired.
\end{proof}

\begin{rem}
It is generally not true that $\langle a\rangle^u+\langle -a\rangle^u=\langle 1\rangle^u+\langle -1\rangle^u$, as the unstable parts of these elements are given by $-a^2$ and $-1$, respectively. In fact, using relation (ii'), we find that $\langle -a\rangle^u+\langle a-1\rangle^u=\langle -1\rangle^u+\langle a^2-a\rangle^u$, and hence
\begin{align*}
    \langle a\rangle^u+\langle -a\rangle^u&=\langle a\rangle^u+\langle a^2-a\rangle^u+\langle -1\rangle^u-\langle a-1\rangle^u\\
    &=\langle\tfrac{1}{a^2}\rangle^u+\langle a^4(a-1)\rangle^u+\langle-1\rangle^u-\langle a-1\rangle^u\tag{ii'}\\
    &=\langle\tfrac{1}{a^2}\rangle^u+\langle a-1\rangle^u+\langle a^2\rangle^u-\langle\tfrac{1}{a^2}\rangle^u+\langle -1\rangle^u-\langle a-1\rangle^u\tag{i'}\\
    &=\langle 1\rangle^u+\langle -1\rangle^u+\langle a\rangle^u-\langle\tfrac{1}{a}\rangle^u\tag{i'}\\
    &=\langle 1\rangle^u+\langle -1\rangle^u+(0,a^2).
\end{align*}
\end{rem}

\subsection{B\'ezoutians}
We will briefly recall some details about univariate B\'ezoutians, which provide an algebraic formula for the unstable degree by \cite{Caz12}.

\begin{defn}\label{def:bezoutian}
Given a pointed rational function\footnote{We will generally normalize $f/g$ so that $f$ is monic, but this assumption is not necessary to define the B\'ezoutian.} $f/g:\mb{P}^1_k\to\mb{P}^1_k$, the \textit{B\'ezoutian polynomial} of $f/g$ is defined to be
\[\Bez(f/g):=\frac{f(X)g(Y)-f(Y)g(X)}{X-Y}\in k[X,Y].\]

The \textit{B\'ezoutian matrix with respect to the monomial basis} is the matrix
\[\Bez^\mon(f/g):=(a_{ij})_{i,j=0}^m,\]
where $a_{ij}\in k$ are such that $\Bez(f/g)=\sum_{i,j}a_{ij}X^iY^j$.
\end{defn}

\begin{rem}
The term \textit{monomial basis} in Definition~\ref{def:bezoutian} refers to the monomial basis $\{x^i\}_i$ of the $k$-algebra $Q(f/g):=k[x,\frac{1}{g}]/(\frac{f}{g})$. The B\'ezoutian can be viewed as an element of $Q(f/g)\otimes_k Q(f/g)$ under the isomorphism
\begin{align*}
    Q(f/g)\otimes_k Q(f/g)&\to k[X,Y,\sfrac{1}{g(X)},\sfrac{1}{g(Y)}]/(\sfrac{f(X)}{g(X)},\sfrac{f(Y)}{g(Y)})\\
    a(x)\otimes b(x)&\mapsto a(X)b(Y).
\end{align*}
The B\'ezoutian matrix with respect to the monomial basis is then the coefficient matrix of the B\'ezoutian polynomial in the basis $\{X^iY^j\}_{i,j}$.

We will also need another choice of basis for $Q(f/g)$.
\end{rem}

\begin{defn}\label{def:newton basis}
Let $f/g:\mb{P}^1_k\to\mb{P}^1_k$ be a pointed rational function with rational root $r$ of order $m$. Consider the $k$-algebra
\[Q_r(f/g):=\frac{k[x,\sfrac{1}{g}]_{(x-r)}}{(f/g)}.\]
The \textit{local Newton basis} of $Q_r(f/g)$ is the basis
\[B^\Nwt_r(f/g):=\bigg\{\frac{f}{g\cdot (x-r)},\frac{f}{g\cdot (x-r)^2},\ldots,\frac{f}{g\cdot (x-r)^m}\bigg\}.\]
If all roots of $f$ are $k$-rational, then we define the \textit{(global) Newton basis} of $Q(f/g)$ as
\[B^\Nwt(f/g):=\bigcup_{r\in f^{-1}(0)}B^\Nwt_r(f/g).\]
\end{defn}

\begin{rem}
Any symmetric non-degenerate matrix $M$ over a field $k$ represents a symmetric non-degenerate bilinear form over $k$. Given such a matrix $M$, we will also denote the isomorphism class of the bilinear form that it represents by $M\in\GW(k)$.
\end{rem}

Cazanave computes the unstable global degree in terms of the B\'ezoutian with respect to the monomial basis~\cite[Theorem 3.6]{Caz12}.

\begin{thm}[Cazanave]\label{thm:cazanave}
There is a group isomorphism
\[\deg^u:([\mb{P}^1_k,\mb{P}^1_k],\nplus)^{\mr{gp}}\to\GW^u(k)\]
induced by the formula $\deg^u(f/g)= (\Bez^\mon(f/g),\det\Bez^\mon(f/g))$ for all pointed rational functions with $f$ monic.
\end{thm}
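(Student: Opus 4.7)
The plan is to follow Cazanave's original argument, which takes Morel's identification~\eqref{eq:cazanave} of $[\mb{P}^1_k, \mb{P}^1_k]$ with $\GW^u(k)$ as given and pins down its explicit form via the B\'ezoutian. Throughout, I will use Cazanave's presentation of the monoid $([\mb{P}^1_k,\mb{P}^1_k],\nplus)$: a pointed rational function of degree $n$ is represented by a reduced pair $(f,g)$ with $f$ monic of degree $n$ and $\deg g < n$, and $\nplus$ arises from a matrix concatenation (block diagonal for the ``continued fraction'' matrix presenting $f/g$). The group completion of this monoid is then the left-hand side of the theorem.

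First, I would verify that $(\Bez^\mon(f/g), \det \Bez^\mon(f/g))$ actually lands in $\GW^u(k)$. The B\'ezoutian polynomial is manifestly symmetric in $X$ and $Y$, so the matrix $\Bez^\mon(f/g)$ is symmetric. The classical identity $\det \Bez^\mon(f/g) = \Res(f,g)$ shows the form is non-degenerate since $f$ and $g$ are coprime, and the compatibility with the discriminant in the fibered definition of $\GW^u(k)$ is then automatic.

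Second, I would verify that the assignment is a monoid homomorphism $([\mb{P}^1_k, \mb{P}^1_k],\nplus) \to \GW^u(k)$. The key lemma is the bilinear form identity
\[ \Bez^\mon(f_1/g_1 \nplus f_2/g_2) \cong \Bez^\mon(f_1/g_1) \oplus \Bez^\mon(f_2/g_2) \]
together with the multiplicativity $\det \Bez^\mon(f_1/g_1 \nplus f_2/g_2) = \det \Bez^\mon(f_1/g_1) \cdot \det \Bez^\mon(f_2/g_2)$ in $k^\times$. Under Cazanave's matrix description of $\nplus$, this reduces to exhibiting a block decomposition of the B\'ezoutian of the concatenation after a suitable change of monomial basis, whereupon both the stable and unstable additivity become immediate.

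Finally, the universal property of group completion yields a group homomorphism $([\mb{P}^1_k,\mb{P}^1_k],\nplus)^\mr{gp} \to \GW^u(k)$, and bijectivity can be read off the generators and relations of Proposition~\ref{prop:presentation of GWu}. For surjectivity, the degree-one pointed function $x/a$ has B\'ezoutian matrix $[a]$, mapping to $\langle a\rangle^u$, and these generate the target. For injectivity, the relations (i$'$) and (ii$'$) must be realized by naive $\mb{A}^1$-homotopies between degree-two rational functions, which can be exhibited explicitly. The main obstacle is the additivity step: exhibiting the block decomposition of the B\'ezoutian under Cazanave's $\nplus$ requires careful bookkeeping with the B\'ezoutian polynomial and a nontrivial change of monomial basis. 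Once this computation is in hand, the remaining steps are essentially formal.
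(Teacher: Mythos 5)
A preliminary remark: the paper does not prove this statement at all --- it is quoted from \cite[Theorem~3.6]{Caz12} --- so your proposal has to be measured against Cazanave's argument rather than anything in this text. Your outline does track that argument in broad strokes (symmetry and non-degeneracy of the B\'ezoutian, additivity under $\nplus$ via a block decomposition after a change of basis, the degree-one generators $x/a\mapsto\langle a\rangle^u$), but as written it has two genuine gaps.

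First, well-definedness. The source of $\deg^u$ is the set of genuine $\A^1$-homotopy classes $[\mb{P}^1_k,\mb{P}^1_k]$, and nothing in your sketch explains why $f/g\mapsto(\Bez^\mon(f/g),\det\Bez^\mon(f/g))$ is constant on such classes, nor why $\nplus$ is compatible with the group structure that Morel's identification~\eqref{eq:cazanave} puts on $[\mb{P}^1_k,\mb{P}^1_k]$. ``Taking Morel's identification as given'' does not supply this: Morel computes the abstract group, and the real content of Cazanave's theorem is the comparison --- invariance of the B\'ezoutian under naive homotopies (for the unstable factor one needs, e.g., that $\det\Bez^\mon(f/g)=\pm\Res(f,g)$ is a unit in $k[t]$ along a homotopy and hence constant), and the fact that the canonical map from naive homotopy classes to $[\mb{P}^1_k,\mb{P}^1_k]$ is a monoid homomorphism for $\nplus$ versus the $h$-cogroup structure and is a group completion (this is \cite[Lemma~3.20, Theorems~3.21 and~3.22]{Caz12}, the results this paper itself leans on in Section~\ref{sec:general case}). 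Second, injectivity. Realizing relations (i$'$) and (ii$'$) of Proposition~\ref{prop:presentation of GWu} by explicit naive homotopies of degree-two functions only produces a candidate inverse $\GW^u(k)\to([\mb{P}^1_k,\mb{P}^1_k],\nplus)^{\mr{gp}}$ on generators; to see that the two composites are identities you also need that the group completion is generated by the degree-one classes, i.e.\ that an arbitrary pointed rational function becomes, after stabilization and naive homotopy, a $\nplus$-sum of degree-one functions. That is the hard half of Cazanave's classification of naive homotopy classes of fixed degree by their B\'ezoutian forms (functions whose B\'ezoutian matrices are congruent by a determinant-one base change are naively homotopic), and your sketch never engages it; bijectivity cannot simply be ``read off the generators and relations.''
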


Here, the superscript $\mr{gp}$ denotes group completion (which is necessary, as the B\'ezoutian bilinear form only realizes elements of non-negative rank). The symbol $\nplus$ is Cazanave's \textit{na\"ive sum}, which is a monoid structure on the set $[\mb{P}^1_k,\mb{P}^1_k]$. We will recall the definition of $\nplus$ in Definition~\ref{def:n sum} when it becomes more relevant for us.

\begin{rem}
Note that $\Bez(cf/cg)=c^2\Bez(f/g)$. This $c^2$ factor does not cause any inconsistencies in the stable setting, as $\langle c^2\rangle=\langle 1\rangle$ in $\GW(k)$. However, this $c^2$ factor would cause $(\Bez^\mon(f/g),\det\Bez^\mon(f/g))$ to be ill-defined in $\GW^u(k)$. To get a well-defined B\'ezoutian, we therefore always normalize $f/g$ so that $f$ is monic as in \cite{Caz12}.
\end{rem}

When $f$ is a polynomial morphism, $\deg^u(f)$ is fully determined by the leading coefficient. Our convention that $f$ is monic forces $\deg^u(f)$ to scale inversely rather than directly:

\begin{prop}\label{prop:bezoutian of polynomial}
Let $f(x)=\sum_{i=0}^n a_ix^i\in k[x]$. Then $\deg^u(f)\in\GW^u(k)$ is presented by any matrix of the form
\begin{equation}\label{eq:upper triangular}
\begin{pmatrix}
* & * & \cdots & * & a_n^{-1}\\
* & * & \cdots & a_n^{-1} & 0\\
\vdots & \vdots & \text{\reflectbox{$\ddots$}} & \vdots & \vdots\\
* & a_n^{-1} & \cdots & 0 & 0\\
a_n^{-1} & 0 & \cdots & 0 & 0
\end{pmatrix}\qquad\text{or}\qquad
\begin{pmatrix}
0 & 0 & \cdots & 0 & a_n^{-1}\\
0 & 0 & \cdots & a_n^{-1} & *\\
\vdots & \vdots & \text{\reflectbox{$\ddots$}} & \vdots & \vdots\\
0 & a_n^{-1} & \cdots & * & *\\
a_n^{-1} & * & \cdots & * & *
\end{pmatrix}.
\end{equation}
\end{prop}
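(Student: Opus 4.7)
The plan is a direct computation of the B\'ezoutian followed by a simple congruence argument. Since Cazanave's formula requires a monic numerator, I begin by writing $f = \sum_{i=0}^n a_ix^i$ as the pointed rational function $\tilde f/\tilde g$ with $\tilde f = a_n^{-1}f$ monic and $\tilde g = a_n^{-1}$. Definition~\ref{def:bezoutian} then gives
\[\Bez(\tilde f/\tilde g) = \frac{(a_n^{-1}f(X))(a_n^{-1}) - (a_n^{-1}f(Y))(a_n^{-1})}{X-Y} = \frac{1}{a_n^2}\cdot\frac{f(X)-f(Y)}{X-Y}.\]

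Next, I expand using the elementary identity $\tfrac{X^i - Y^i}{X-Y} = \sum_{j+k=i-1} X^j Y^k$ to get
\[\Bez(\tilde f/\tilde g) = \sum_{i=1}^n\frac{a_i}{a_n^2}\sum_{j+k=i-1}X^jY^k.\]
Reading off coefficients (with rows and columns indexed $0$ to $n-1$), the $(j,k)$ entry of $\Bez^\mon(\tilde f/\tilde g)$ is $a_{j+k+1}/a_n^2$ when $j+k+1\le n$ and $0$ otherwise. On the antidiagonal $j+k=n-1$ this equals $a_n/a_n^2 = a_n^{-1}$, below the antidiagonal the entries vanish, and above it they are unconstrained parameters. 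Applying Cazanave's Theorem~\ref{thm:cazanave} (which says $\deg^u$ is represented by the pair $(\Bez^\mon,\det\Bez^\mon)$) then gives the first form in \eqref{eq:upper triangular}.

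For the second form, I conjugate by the basis-reversal permutation matrix $P$ sending $e_i \mapsto e_{n-1-i}$. Since congruence preserves the class in $\GW(k)$ and $\det(P)^2 = 1$ leaves the unstable part of $\GW^u(k)$ invariant, $P^T\Bez^\mon P$ presents the same element of $\GW^u(k)$. Under this conjugation the antidiagonal is fixed while the upper-left and lower-right triangles are swapped, immediately yielding the second form.

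There is no substantive obstacle: the entire argument reduces to the one-line B\'ezoutian expansion and the invariance of $\GW^u(k)$-classes under congruence by a permutation matrix. The only point requiring care is the index bookkeeping to confirm that the antidiagonal entry is exactly $a_n^{-1}$ rather than $a_n$; the $1/a_n^2$ factor introduced by the monic normalization is essential here, and reflects the earlier remark that $\Bez(cf/cg) = c^2\Bez(f/g)$.
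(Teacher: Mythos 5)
Your computation of the B\'ezoutian is correct and is essentially the same as the paper's first step: after the monic normalization $\tilde f/\tilde g=(a_n^{-1}f)/a_n^{-1}$, the matrix $\Bez^\mon(\tilde f/\tilde g)$ is anti-triangular with constant anti-diagonal $a_n^{-1}$, and Theorem~\ref{thm:cazanave} identifies this matrix together with its determinant with $\deg^u(f)$. The permutation-congruence step for the flipped shape is also fine as far as it goes, since $\det(P)^2=1$ leaves the unstable component untouched.

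However, there is a genuine gap: the proposition claims that $\deg^u(f)$ is presented by \emph{any} matrix of the displayed shapes, i.e.\ by every symmetric matrix whose entries on one side of the anti-diagonal vanish and whose anti-diagonal is constantly $a_n^{-1}$, whatever the $*$ entries are. Your argument establishes this only for two specific matrices: $\Bez^\mon(\tilde f/\tilde g)$ itself, whose upper-left entries are the particular values $a_{j+k+1}a_n^{-2}$, and its reversal $P^{\intercal}\Bez^\mon(\tilde f/\tilde g)P$. Calling those entries ``unconstrained parameters'' does not show that changing them leaves the class in $\GW^u(k)$ unchanged, and that independence is precisely the content that gets used later (for instance in Lemma~\ref{lem:degree=newton matrix}, where the $*$ entries are the partial-fraction coefficients $A_{r,i}$, which are not the B\'ezoutian entries). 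To close the gap you need the paper's remaining step: the class in $\GW(k)$ of an anti-triangular symmetric matrix with constant anti-diagonal depends only on that anti-diagonal entry (this is \cite[Lemma 6]{KW20}; concretely, one clears the $*$ entries by congruence with unipotent, hence determinant-one, elementary matrices, reducing to the purely anti-diagonal matrix, which represents $\tfrac{n}{2}\mb{H}$ or $\tfrac{n-1}{2}\mb{H}+\langle a_n^{-1}\rangle$), while the determinant of any anti-triangular matrix is determined by its anti-diagonal alone, so the unstable component is also independent of the $*$'s. Once this is in place, both displayed forms follow simultaneously and your permutation argument becomes unnecessary.
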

\begin{proof}
Because we normalize so that $f$ is monic, we write $f=\frac{x^n+\sum_i a_ia_n^{-1}x^i}{a_n^{-1}}$. One can readily compute that $\Bez(\frac{x^n+\sum_i a_ia_n^{-1}x^i}{a_n^{-1}})=a_n^{-1}\sum_{i+j=n-1}X^iY^j+\sum_{\ell=1}^{n-1} a_\ell a_n^{-1}\sum_{i+j=\ell-1}X^iY^j$, so the B\'ezoutian matrix with respect to the monomial basis is given by
\[\Bez^\mon(f)=\begin{pmatrix}
a_1a_n^{-1} & a_2a_n^{-1} & \cdots & a_{n-1}a_n^{-1} & a_n^{-1}\\
a_2a_n^{-1} & a_3a_n^{-1} & \cdots & a_n^{-1} & 0\\
\vdots & \vdots & \text{\reflectbox{$\ddots$}} & \vdots & \vdots\\
a_{n-1}a_n^{-1} & a_n^{-1} & \cdots & 0 & 0\\
a_n^{-1} & 0 & \cdots & 0 & 0
\end{pmatrix}.\]
The element of $\GW(k)$ determined by the matrix $\Bez^\mon(f)$ depends only on $a_n^{-1}$ (by e.g.~\cite[Lemma 6]{KW20}). Moreover, the determinant of any (anti)-triangular matrix is determined by its diagonal, so any matrix of the form Equation~\ref{eq:upper triangular} determines the same class in $\GW^u(k)$ as $(\Bez^\mon(f),\det\Bez^\mon(f))$.
\end{proof}

\section{Unstable local degree}\label{sec:local degree}
Following the stable setting, we will define the \textit{unstable local degree} of a map of curves at a closed point with rational image.

\begin{setup}\label{setup:local degree}
Let $X$ and $Y$ be curves over $k$. Let $f:X\to Y$ be a morphism. Assume that $x\in X$ is a closed point such that $f(x)\in Y(k)$. Let $U\subseteq X$ and $V\subseteq Y$ be Zariski open neighborhoods of $x$ and $f(x)$, respectively. Assume that $x$ is isolated in its fiber, so that (shrinking $U$ and $V$ as necessary) $f$ defines a map
\[\bar{f}_x:U/(U-\{x\})\to V/(V-\{f(x)\}).\]
By excision, we can rewrite this as
\[\bar{f}_x:\mb{P}^1_k/(\mb{P}^1_k-\{x\})\to\mb{P}^1_k/(\mb{P}^1_k-\{f(x)\})\simeq\mb{P}^1_k.\]
In order to obtain an element of $[\mb{P}^1_k,\mb{P}^1_k]$, we precompose with the collapse map $c_x:\mb{P}^1_k\to\mb{P}^1_k/(\mb{P}^1_k-\{x\})$.
\end{setup}

\begin{rem}\label{rem:collapse and induced map}
Suppose that $f$ has vanishing locus $D=\{x_1,\ldots,x_n\}$. We can then form the collapse map
\[c_D:\mb{P}^1_k\to\frac{\mb{P}^1_k}{\mb{P}^1_k-D}\]
from the inclusion $\mb{P}^1_k-D\hookrightarrow\mb{P}^1_k$. There is a canonical isomorphism $\mb{P}^1_k/(\mb{P}^1_k-D)\cong\bigvee_{i=1}^n\mb{P}^1_k/(\mb{P}^1_k-\{x_i\})$ \cite[Lemma~A.3]{Caz12}. The induced maps $\bar{f}_{x_i}:\mb{P}^1_k/(\mb{P}^1_k-\{x_i\})\to\mb{P}^1_k$ are constructed such that the diagram
\[\begin{tikzcd}
    \mb{P}^1_k\arrow[d,"c_D"']\arrow[r,"f"] & \mb{P}^1_k\\
    \frac{\mb{P}^1_k}{\mb{P}^1_k-D}\arrow[r,"\cong"] & \bigvee_i\frac{\mb{P}^1_k}{\mb{P}^1_k-\{x_i\}}\arrow[u,"\vee_i\bar{f}_{x_i}"']
\end{tikzcd}\]
commutes.
\end{rem}

\begin{defn}\label{defn:unstable local degree}
Assume the notation of Setup~\ref{setup:local degree}. The \textit{unstable local degree} of $f$ at $x$ is the image $\deg^u_x(f)\in\GW^u(k)$ of the composite $\bar{f}_x\circ c_x$ under Cazanave's isomorphism (Equation~\ref{eq:cazanave}). We will call $\deg^u_x(f)\in\GW^u(k)$ the \textit{algebraic} unstable local degree, in contrast to the \textit{homotopical} unstable local degree $\bar{f}_x\circ c_x\in[\mb{P}^1_k,\mb{P}^1_k]$.
\end{defn}

Note that if $x$ is the only zero of $f$, then the unstable degree coincides with the unstable local degree.

\begin{prop}\label{prop:local=global}
Let $f:\mb{P}^1_k\to\mb{P}^1_k$ be a pointed rational map with $f^{-1}(0)=\{x\}$. Assume that $x\in\mb{A}^1_k(k)$. Then $\deg^u_x(f)=\deg^u(f)$.
\end{prop}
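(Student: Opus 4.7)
The plan is to reduce the statement to a direct application of Remark~\ref{rem:collapse and induced map} in the degenerate case where the vanishing locus consists of a single point. Once this is done, the equality of unstable degrees follows by applying Cazanave's isomorphism to an equality already holding at the level of $[\mb{P}^1_k,\mb{P}^1_k]$.

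When $f^{-1}(0)=\{x\}$, the set $D$ appearing in Remark~\ref{rem:collapse and induced map} is the singleton $\{x\}$, so the canonical wedge decomposition $\mb{P}^1_k/(\mb{P}^1_k-D)\cong\bigvee_i\mb{P}^1_k/(\mb{P}^1_k-\{x_i\})$ reduces to the identity on $\mb{P}^1_k/(\mb{P}^1_k-\{x\})$. Consequently the collapse map $c_D$ coincides with the map $c_x:\mb{P}^1_k\to\mb{P}^1_k/(\mb{P}^1_k-\{x\})$ of Setup~\ref{setup:local degree}, and the wedge of induced maps $\vee_i\bar{f}_{x_i}$ is simply $\bar{f}_x$. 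The commutative diagram of Remark~\ref{rem:collapse and induced map} then specializes to the assertion
\[
f = \bar{f}_x\circ c_x \qquad \text{in } [\mb{P}^1_k,\mb{P}^1_k],
\]
where the right-hand side is interpreted using the crushing equivalence $\mb{P}^1_k\xrightarrow{\sim}\mb{P}^1_k/(\mb{P}^1_k-\{0\})$ that is built into the definition of $\bar{f}_x$ (see Setup~\ref{setup:local degree}).

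Applying the unstable degree $\deg^u$ (Theorem~\ref{thm:cazanave}) to this equality and invoking Definition~\ref{defn:unstable local degree} immediately yields $\deg^u(f)=\deg^u(\bar{f}_x\circ c_x)=\deg^u_x(f)$, as required. No real obstacle arises: the argument is essentially an unpacking of definitions, and the only substantive input---the commutativity of the diagram in Remark~\ref{rem:collapse and induced map}---becomes nearly tautological when $D$ is a singleton, since both the wedge decomposition and the composition with the induced map degenerate.
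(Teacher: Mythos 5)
Your argument is correct and is essentially the same as the paper's: the paper also reduces the claim to the commutativity of the diagram relating $f$, $c_x$, $\bar f_x$, and the crushing equivalence $\mb{P}^1_k\simeq\mb{P}^1_k/(\mb{P}^1_k-\{0\})$, and cites Remark~\ref{rem:collapse and induced map} with $n=1$ for that commutativity. Your unpacking of how the wedge decomposition and collapse map degenerate for a singleton $D$ is just a more explicit rendering of the same step.
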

\begin{proof}
By definition of the unstable local degree, it suffices to show that the diagram 
\begin{equation}\label{eq:local at one point}
    \begin{tikzcd}
        \mb{P}^1_k\arrow[d,"f"]\arrow[r,"c_x"] & \mb{P}^1_k/(\mb{P}^1_k-\{x\})\arrow[d,"\bar{f}_x"]\\
        \mb{P}^1_k & \mb{P}^1/(\mb{P}^1-\{0\})\arrow[l,"\simeq"]
    \end{tikzcd}
\end{equation}
commutes in $\H(k)$. The commutativity of Diagram~\ref{eq:local at one point} is explained in Remark~\ref{rem:collapse and induced map} (setting $n=1$).
\end{proof}

\begin{rem}
Precomposition with the collapse map should be thought of as a transfer $c_x^*:\GW^u(k(x))\to\GW^u(k)$, where $k(x)$ is the residue field of $x$. When $x$ is $k$-rational, the collapse map is in fact a homotopy equivalence $\mb{P}^1_k\simeq\mb{P}^1_k/(\mb{P}^1_k-\{x\})$ of pointed motivic spaces. Throughout this article, we will assume that $x$ is $k$-rational. We will give an analysis of the \textit{unstable transfer} $c_x^*$ and the unstable local degree at non-rational points in future work.
\end{rem}

\subsection{Algebraic formula for the unstable local degree}\label{sec:local degree formula}
We now give two formulas for the unstable local degree at rational points. The first formula assumes that we are computing the unstable local degree at a simple zero, in which case the local degree is given by the inverse of the derivative. This is the unstable analog of \cite[Lemma 9]{KW19}.

\begin{rem}
We are working with pointed rational functions $f/g$, which means that $\infty\in\mb{P}^1_k$ is not a root of $f$. In other words, all roots of $f$ lie in $\mb{A}^1_k=\mb{P}^1_k-\{\infty\}$.
\end{rem}

\begin{prop}\label{prop:local degree at simple zero}
Let $f:\mb{P}^1_k\to\mb{P}^1_k$ be a pointed rational map. Assume that $x\in\mb{A}^1_k(k)$ is a simple $k$-rational zero of $f$. Then $\deg^u_x(f)=\langle f'|_x^{-1}\rangle^u$.  
\end{prop}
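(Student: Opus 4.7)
The plan is to reduce $\bar f_x \circ c_x$ to a linear endomorphism of $\mb{P}^1_k$ via the motivic purity theorem, then apply Propositions~\ref{prop:local=global} and~\ref{prop:bezoutian of polynomial} to compute its unstable degree.

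First, I would apply motivic purity to identify $\mb{P}^1_k/(\mb{P}^1_k - \{x\})$ and $\mb{P}^1_k/(\mb{P}^1_k - \{0\})$ with the Thom spaces of the one-dimensional tangent lines $T_x\mb{P}^1_k$ and $T_0\mb{P}^1_k$, trivialized by the uniformizers $t - x$ and $t$ respectively. Under these trivializations, both Thom spaces are canonically identified with $\mb{P}^1_k$, and $\bar f_x$ is identified with the Thom-space map $\Th(df_x)$. Because $x$ is a simple zero, $df_x$ is multiplication by $f'|_x$ in these trivializations, so $\bar f_x \circ c_x$ is $\mb{A}^1$-homotopic to the linear pointed rational map $L(t) := f'|_x \cdot t$ as an element of $[\mb{P}^1_k, \mb{P}^1_k]$.

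Next, I would compute $\deg^u(L)$. Since $L$ has $0 \in \mb{A}^1_k(k)$ as its unique zero, Proposition~\ref{prop:local=global} gives $\deg^u_0(L) = \deg^u(L)$, and Proposition~\ref{prop:bezoutian of polynomial} applied to the degree-one polynomial $L$ with leading coefficient $f'|_x$ yields $\deg^u(L) = \langle f'|_x^{-1}\rangle^u$ via its $1 \times 1$ B\'ezoutian matrix. Chaining these identifications produces $\deg^u_x(f) = \langle f'|_x^{-1}\rangle^u$ as claimed.

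The main obstacle is making the purity step fully rigorous in the unstable setting: one must verify that the chosen trivializations of $T_x\mb{P}^1_k$ and $T_0\mb{P}^1_k$ are compatible with the collapse maps of Setup~\ref{setup:local degree} so that $\Th(df_x)$ matches $L$ on the nose, not merely up to a nonzero scalar. Such a scalar would alter the unstable class in $\GW^u(k)$, since the unstable part depends on $f'|_x$ itself rather than only on its square class. An alternative approach avoids purity by constructing an explicit $\mb{A}^1$-homotopy between $f$ and a linear model --- writing $f = p/q$ with $p(t) = (t - x)h(t)$, one can take $F(t, s) := (t - x)\,h(x + s(t - x))/q(x + s(t - x))$, which restricts to $L$ at $s = 0$ and to $f$ at $s = 1$ --- but this route requires controlling the motion of the other zeros of $F(-, s)$ and verifying that the induced map on $\mb{P}^1_k/(\mb{P}^1_k - \{x\})$ is constant along $s$.
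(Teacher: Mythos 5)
Your proposal is correct and follows essentially the same route as the paper: reduce $\bar f_x\circ c_x$ to the linear map $z\mapsto f'|_x\cdot z$ via the naturality of the purity isomorphism and the canonical trivializations of $T_x\mb{P}^1_k$ and $f^*T_{f(x)}\mb{P}^1_k$ at the rational points $x$ and $0$, then compute the unstable degree of that linear map via Proposition~\ref{prop:bezoutian of polynomial}. The compatibility concern you raise (that the identification must hold on the nose, not merely up to a scalar) is exactly what the paper addresses by stacking the two commutative diagrams coming from the rational-point trivializations and from \cite[Lemma 2.1]{Voe03}.
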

\begin{proof}
This is the unstable, $k$-rational version of \cite[Proposition 15]{KW19}. Because the proof in \textit{loc.~cit.}~makes use of the stable motivic homotopy category, we need to modify the proof to hold in $\H(k)$.

Because $x$ is a simple zero of $f$ (equivalently, $f$ is \'etale at $x$), the induced map of tangent spaces $df_x:T_x\mb{P}^1_k\to f^*T_{f(x)}\mb{P}^1_k$ is a monomorphism. Thus $df_x$ induces a map $\Th(df_x):\Th(T_x\mb{P}^1_k)\to\Th(f^*T_{f(x)}\mb{P}^1_k)$ of Thom spaces. Because $x$ and $f(x)$ are $k$-rational, we have isomorphisms $\Th(T_x\mb{P}^1_k)\cong\Th(\mc{O}_{\Spec{k}})\cong\Th(f^*T_{f(x)}\mb{P}^1_k)$ in $\H(k)$, which fit into the commutative diagram
\begin{equation}\label{eq:deriv 1}
\begin{tikzcd}
\Th(T_x\mb{P}^1_k)\arrow[r,"\Th(df_x)"]\arrow[d,"\cong"'] & \Th(f^*T_{f(x)}\mb{P}^1_k)\arrow[d,"\cong"]\\
\Th(\mc{O}_{\Spec{k}})\arrow[r,"f'|_x"] & \Th(\mc{O}_{\Spec{k}}).
\end{tikzcd}
\end{equation}
Here, $f'|_x$ refers to the linear map $z\mapsto f'|_x\cdot z$. Note that $f'|_x\in k^\times$ since $f$ is \'etale at $x$. The naturality of the purity isomorphism \cite[Lemma 2.1]{Voe03} yields a commutative diagram
\begin{equation}\label{eq:deriv 2}
\begin{tikzcd}
&\Th(T_xU)\arrow[d,"\cong"']\arrow[r,"\Th(df_x)"] & \Th(f^*T_{f(x)}V)\arrow[d,"\cong"]&\\
\mb{P}^1_k\arrow[r,"\simeq"] &\frac{U}{U-\{x\}}\arrow[r,"f|_U"] & \frac{V}{V-\{f(x)\}}\arrow[r,"\simeq"] &\mb{P}^1_k.
\end{tikzcd}
\end{equation}
By stacking Diagrams~\ref{eq:deriv 1} and~\ref{eq:deriv 2}, we find that $\deg^u_x(f)=\deg^u(z\mapsto f'|_x\cdot z)$. In other words, we have reduced computing $\deg^u_x(f)$ to computing the unstable \textit{global} degree of a pointed rational function. We may therefore apply \cite{Caz12} and compute $\deg^u(z\mapsto f'|_x\cdot z)=\langle f'|_x^{-1}\rangle^u$ (see Proposition~\ref{prop:bezoutian of polynomial}).
\end{proof}

Now we give a more general, algebraic formula for the unstable local degree at rational points. This formula, which is the unstable analog of \cite[Main Theorem]{KW19} and \cite[Theorem~1.2]{BMP21}, involves the \textit{local Newton matrix} \cite[Definition 7]{KW20}.

\begin{defn}\label{def:newton matrix}
Let $f/g$ be a pointed rational function. Let $r\in\mb{A}^1_k(k)$ be a root of $f$ of multiplicity $m$. Write a partial fraction decomposition
\[\frac{g(x)}{f(x)}=\frac{A_{r,m}}{(x-r)^m}+\frac{A_{r,m-1}}{(x-r)^{m-1}}+\cdots+\frac{A_{r,1}}{x-r}+\text{higher order terms}.\]
Define the \textit{local Newton matrix}
\[\Nwt_r(f/g):=\begin{pmatrix}
A_{r,1} & A_{r,2} & \cdots & A_{r,m-1} & A_{r,m}\\
A_{r,2} & A_{r,3} & \cdots & A_{r,m} & 0\\
\vdots & \vdots & \text{\reflectbox{$\ddots$}} & \vdots & \vdots\\
A_{r,m-1} & A_{r,m} & \cdots & 0 & 0\\
A_{r,m} & 0 & \cdots & 0 & 0
\end{pmatrix}.\]
The local Newton matrix, together with its determinant, represents a class
\[\Nwt^u_r(f/g):=(\Nwt_r(f/g),\det\Nwt_r(f/g))\]
in $\GW^u(k)$.
\end{defn}

To prove that $\Nwt^u_r(f/g)$ computes $\deg^u_r(f/g)$, we first show that the unstable local degree is an $\mb{A}^1$-homotopy invariant (c.f.~\cite[Lemma 4]{KW20}).

\begin{lem}\label{lem:homotopy invariant}
Let $r\in\mb{A}^1_k$ be a closed point. Let $\frac{f_0}{g_0},\frac{f_1}{g_1}:\mb{P}^1_k\to\mb{P}^1_k$ be pointed rational functions such that $f_0(r)=f_1(r)=0$. Suppose there exists an open subscheme $U\subseteq\mb{A}^1_k\times\mb{A}^1_k$ containing $\{r\}\times\mb{A}^1_k$ and a morphism $H:U\to\mb{P}^1_k$ such that $H(x,0)=\frac{f_0}{g_0}(x)$ and $H(x,1)=\frac{f_1}{g_1}(x)$. If $\{r\}\times\mb{A}^1_k$ is a connected component of $H^{-1}(\{0\}\times\mb{A}^1_k)$, then
\[\deg^u_r(f_0/g_0)=\deg^u_r(f_1/g_1).\]
\end{lem}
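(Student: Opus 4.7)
The plan is to exhibit a pointed $\A^1$-homotopy in $\H(k)$ between the homotopical unstable local degrees of $f_0/g_0$ and $f_1/g_1$ at $r$ from Definition~\ref{defn:unstable local degree}. Since $\deg^u$ is well-defined on $\A^1$-homotopy classes via Cazanave's isomorphism, such a homotopy immediately gives the desired equality in $\GW^u(k)$. I will build this homotopy directly from $H$ after first shrinking its domain using the connected component hypothesis.

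The first step is to produce a convenient open $V \subseteq U$. Because $U$ is an open subscheme of the Noetherian scheme $\A^1_k \times \A^1_k$, the closed subscheme $Z := H^{-1}(\{0\} \times \A^1_k)$ has finitely many connected components, each of which is therefore clopen in $Z$. Writing $Z = (\{r\} \times \A^1_k) \sqcup Z'$, the piece $Z'$ is closed in $U$, so $V := U \setminus Z'$ is an open neighborhood of $\{r\} \times \A^1_k$ with $V \cap Z = \{r\} \times \A^1_k$. The restriction $H|_V$ then sends $V \setminus (\{r\} \times \A^1_k)$ into $\mb{P}^1_k - \{0\}$, and so descends to a pointed map
\[ \widetilde H: V/(V - \{r\} \times \A^1_k) \to \mb{P}^1_k/(\mb{P}^1_k - \{0\}) \simeq \mb{P}^1_k. \]

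The second step is to identify the source of $\widetilde H$ with $\mb{P}^1_k \wedge (\A^1_k)_+$. Zariski excision applied to the open inclusion $V \hookrightarrow \A^1_k \times \A^1_k$ gives $V/(V - \{r\} \times \A^1_k) \simeq (\A^1_k \times \A^1_k)/((\A^1_k - \{r\}) \times \A^1_k)$, and the latter is the smash product $(\A^1_k/(\A^1_k - \{r\})) \wedge (\A^1_k)_+$, equivalent to $\mb{P}^1_k \wedge (\A^1_k)_+$ after one further application of excision to $\A^1_k \hookrightarrow \mb{P}^1_k$. Under these identifications $\widetilde H$ becomes a pointed map $\mb{P}^1_k \wedge (\A^1_k)_+ \to \mb{P}^1_k$, i.e.\ a pointed $\A^1$-homotopy between its restrictions at $t = 0$ and $t = 1$. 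By naturality of the excision equivalences applied fiberwise, these two restrictions recover the composites $\bar{(f_i/g_i)}_r \circ c_r$ defining the homotopical unstable local degrees of $f_0/g_0$ and $f_1/g_1$ at $r$. The main subtlety I anticipate is precisely this fiberwise naturality claim, namely that $c_r$ is correctly absorbed into the excision equivalence on each slice $V \cap (\A^1_k \times \{t\})$; this should follow from the same naturality of purity already invoked in the proof of Proposition~\ref{prop:local degree at simple zero}, so no input beyond the framework of Section~\ref{sec:local degree} appears necessary.
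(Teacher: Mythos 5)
Your proof is correct and takes essentially the same route as the paper: both convert $H$ into a naive pointed $\mb{A}^1$-homotopy between $\overline{(f_0/g_0)}_r\circ c_r$ and $\overline{(f_1/g_1)}_r\circ c_r$ by collapsing the complement of $\{r\}\times\mb{A}^1_k$ and applying excision. The only real difference is cosmetic: the paper disposes of the remaining components $Z$ of $H^{-1}(0)$ via the wedge splitting $U/(U-H^{-1}(0))\simeq U/(U-(\{r\}\times\mb{A}^1_k))\vee U/(U-Z)$ from \cite[Lemma~A.3]{Caz12}, whereas you remove them by shrinking $U$ to $V=U\setminus Z'$ before collapsing.
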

\begin{proof}
    Let $Z$ be the union of the connected components of $H^{-1}(\{0\}\times\mb{A}^1_k)$ that are distinct from $\{r\}\times\mb{A}^1_k$. We can then write
    \begin{align*}
        \frac{U}{U-H^{-1}(0)}&=\frac{U}{U-((\{r\}\times\mb{A}^1_k)\amalg Z)}\\
        &\simeq\frac{U}{U-(\{r\}\times\mb{A}^1_k)}\vee\frac{U}{U-Z}\tag*{\cite[Lemma~A.3]{Caz12}}\\
        &\simeq\frac{\mb{P}^1_k\times\mb{A}^1_k}{\mb{P}^1_k\times\mb{A}^1_k-(\{r\}\times\mb{A}^1_k)}\vee\frac{U}{U-Z}.\tag{excision}
    \end{align*}
    This implies that the morphism $\frac{U}{U-H^{-1}(0)}\to\frac{\mb{P}^1_k}{\mb{P}^1_k-\{0\}}$ induced by $H$ is equivalent to a morphism
    \begin{equation}\label{eq:morphism}
        \frac{\mb{P}^1_k\times\mb{A}^1_k}{\mb{P}^1_k\times\mb{A}^1_k-(\{r\}\times\mb{A}^1_k)}\vee\frac{U}{U-Z}\to\frac{\mb{P}^1_k}{\mb{P}^1_k-\{0\}}.
    \end{equation}
    Pre-composing Equation~\ref{eq:morphism} with the natural morphisms
    \[\frac{\mb{P}^1_k}{\mb{P}^1_k-\{r\}}\times\mb{A}^1_k\to\frac{\mb{P}^1_k\times\mb{A}^1_k}{\mb{P}^1_k\times\mb{A}^1_k-(\{r\}\times\mb{A}^1_k)}\to\frac{\mb{P}^1_k\times\mb{A}^1_k}{\mb{P}^1_k\times\mb{A}^1_k-(\{r\}\times\mb{A}^1_k)}\vee\frac{U}{U-Z}\]
    gives us a na\"ive $\mb{A}^1$-homotopy from the map $\overline{(\frac{f_0}{g_0})}_r$ to $\overline{(\frac{f_1}{g_1})}_r$ (in the notation of Setup~\ref{setup:local degree}). It follows that we have a na\"ive homotopy from $\overline{(\frac{f_0}{g_0})}_r\circ c_r$ to $\overline{(\frac{f_1}{g_1})}_r\circ c_r$, and hence these maps determine the same element of $\GW^u(k)$.
\end{proof}

Using Lemma~\ref{lem:homotopy invariant}, we can now compute $\deg^u_r(f/g)=\Nwt^u_r(f/g)$ when $r$ is a rational point (c.f.~\cite[Corollary 8]{KW20}).

\begin{lem}\label{lem:degree=newton matrix}
Let $f/g$ be a pointed rational function. Let $r\in\mb{A}^1_k(k)$ be a root of $f$. Then
\[\deg^u_r(f/g)=\Nwt^u_r(f/g).\]
\end{lem}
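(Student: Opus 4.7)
The plan is to use the homotopy invariance of Lemma~\ref{lem:homotopy invariant} to reduce the local computation at $r$ to a global B\'ezoutian computation, in parallel with the stable argument of \cite[Corollary~8]{KW20}. Writing $f(x) = (x-r)^m h(x)$ with $h(r) \neq 0$, I would introduce the auxiliary polynomial $Q(x) := \sum_{i=1}^m A_{r,i}(x-r)^{m-i}$ and the pointed rational function $\tilde f/\tilde g := (x-r)^m/Q(x)$. Multiplying through by $(x-r)^m$ in the partial fraction expansion and evaluating at $x=r$ gives $A_{r,m} = g(r)/h(r)$, so $Q(r) = A_{r,m} \neq 0$. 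Consequently the only zero of $\tilde f/\tilde g$ in $\A^1_k$ is $r$, and by construction its local Newton matrix at $r$ agrees with that of $f/g$.

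Next I would produce the na\"ive straight-line homotopy
\[H(x,t) := \frac{(1-t)\,f(x)\,Q(x) + t\,(x-r)^m g(x)}{g(x)\,Q(x)},\]
which specializes to $f/g$ at $t=0$ and to $\tilde f/\tilde g$ at $t=1$. Factoring $(x-r)^m$ out of the numerator leaves a factor $(1-t) h(x) Q(x) + t g(x)$ that evaluates to $g(r) \neq 0$ at $x=r$. The Zariski open complement $U \subseteq \A^1_k \times \A^1_k$ of the common zeros of this factor and the denominator $g(x)Q(x)$ therefore contains $\{r\} \times \A^1_k$, and on $U$ the zero locus of $H$ is exactly $\{r\} \times \A^1_k$ as a connected component. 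Lemma~\ref{lem:homotopy invariant} then gives $\deg^u_r(f/g) = \deg^u_r(\tilde f/\tilde g)$, and Proposition~\ref{prop:local=global} yields $\deg^u_r(\tilde f/\tilde g) = \deg^u(\tilde f/\tilde g)$ since $r$ is the only zero of $\tilde f/\tilde g$.

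It remains to identify $\deg^u((x-r)^m/Q(x))$ with $\Nwt_r(f/g)$ in $\GW^u(k)$. After the substitution $x \mapsto x+r$ reducing to $r=0$, a direct expansion of $\Bez(x^m/Q(x))$ in the monomial basis produces an anti-triangular matrix with $A_{r,m}$ on the anti-diagonal and remaining entries in the pattern dictated by $\Nwt_r(f/g)$; Cazanave's formula (Theorem~\ref{thm:cazanave}) then presents $\deg^u(\tilde f/\tilde g)$ as this matrix paired with its determinant. An argument in the spirit of Proposition~\ref{prop:bezoutian of polynomial} --- using that symmetric row and column operations on an anti-triangular matrix preserve both the class in $\GW(k)$ and the determinant --- identifies the resulting element of $\GW^u(k)$ with $\Nwt_r(f/g)$. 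I expect the main technical obstacle to be this last step: tracking the unstable factor (the actual determinant, not merely its square class) through the matrix manipulations requires slightly more care than the stable argument of \cite[Corollary~8]{KW20}.
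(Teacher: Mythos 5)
Your proposal is correct, and its skeleton matches the paper's: use the na\"ive homotopy invariance of Lemma~\ref{lem:homotopy invariant} to replace $f/g$ by a model function whose only zero is $r$, pass from local to global via Proposition~\ref{prop:local=global}, and finish with Cazanave's B\'ezoutian formula. The difference lies in the model and the endgame. The paper homotopes in two stages to $A(x-r)^m/B$ with constant numerator-remainder and denominator, so only the single number $B/A$ survives; it then needs Proposition~\ref{prop:bezoutian of polynomial} to absorb the unspecified entries and a higher-residue computation to identify $B/A$ with $A_{r,m}$. You instead homotope in one step to $(x-r)^m/Q(x)$, where $Q$ encodes the entire principal part of $g/f$ at $r$, and the payoff is that $\Bez^\mon\big((x-r)^m/Q\big)$ is literally $\Nwt_r(f/g)$ with the basis order reversed: the $(j,k)$ entry is $A_{r,2m-1-j-k}$ when $j+k\geq m-1$ and $0$ otherwise. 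So the final identification you worried about is immediate --- the two Gram matrices differ by congruence with the reversal permutation matrix, whose determinant is $\pm 1$, so both the class in $\GW(k)$ and the actual determinant (not just its square class) are unchanged; no delicate unstable bookkeeping is needed. Your verification of the hypotheses of Lemma~\ref{lem:homotopy invariant} is also sound: after factoring out $(x-r)^m$, the remaining factor $(1-t)h(x)Q(x)+tg(x)$ equals $g(r)\neq 0$ along $\{r\}\times\A^1_k$ (using $A_{r,m}=g(r)/h(r)$), so $\{r\}\times\A^1_k$ is disjoint from, and hence a connected component apart from, the rest of the zero locus of your straight-line homotopy.
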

\begin{proof}
Since $r$ is a root of $f$ of order $m$, there exist $A\in k^\times$ and a polynomial $f_0(x)\in k[x]$ such that $f(x)=(x-r)^m(A+(x-r)f_0(x))$. Similarly, since $f/g$ is a pointed rational function, $r$ is not a root of $g$ and hence there exist $B\in k^\times$ and a polynomial $g_0(x)\in k[x]$ such that $g(x)=B+(x-r)g_0(x)$.

Now let $U=\{(x,t)\in\mb{P}^1_k\times\mb{A}^1_k:x\neq\infty\text{ and }g(x)\neq 0\}$. Then
\[H_1(x,t)=\frac{(x-r)^m(A+t(x-r)f_0(x))}{g(x)}\]
determines a morphism $H_1:U\to\mb{P}^1_k$ such that $H_1(x,0)=\frac{A(x-r)^m}{g(x)}$ and $H_1(x,1)=\frac{f}{g}(x)$. This morphism satisfies the criteria of Lemma~\ref{lem:homotopy invariant}, which implies
\[\deg^u_r(f/g)=\deg^u_r(A(x-r)^m/g(x)).\]
Next, we get a morphism $H_2:\mb{P}^1_k\times\mb{A}^1_k\to\mb{P}^1_k$ given by
\[H_2(x,t)=\frac{A(x-r)^m}{B+t(x-r)g_0(x)}\]
that also satisfies the criteria of Lemma~\ref{lem:homotopy invariant}. Thus
\[\deg^u_r(A(x-r)^m/g(x))=\deg^u_r(A(x-r)^m/B).\]
Since $r$ is the only root of $A(x-r)^m/B$, it follows from Proposition~\ref{prop:local=global} that $\deg^u_r(f/g)=\deg^u(A(x-r)^m/B)$. We now normalize $A(x-r)^m/B=\frac{(x-r)^m}{B/A}$ and apply Proposition~\ref{prop:bezoutian of polynomial} to compute
\[\deg^u(\tfrac{(x-r)^m}{B/A})=\left(\begin{pmatrix}
* & * & \cdots & * & \tfrac{B}{A}\\
* & * & \cdots & \tfrac{B}{A} & 0\\
\vdots & \vdots & \text{\reflectbox{$\ddots$}} & \vdots & \vdots\\
* & \tfrac{B}{A} & \cdots & 0 & 0\\
\tfrac{B}{A} & 0 & \cdots & 0 & 0
\end{pmatrix},(-1)^{\frac{m(m-1)}{2}}\left(\frac{B}{A}\right)^m\right).\]
It thus suffices to prove that $\frac{B}{A}=A_{r,m}$. Given a rational function $F$, let $\Res^m(F,r)$ denote the coefficient of $(x-r)^{-m}$ in the Laurent expansion of $F$ about $r$,\footnote{One might call $\Res^m$ a \textit{higher residue}, since $\Res^1$ is the usual residue from complex analysis.} so that $A_{r,m}=\Res^m(g/f,r)$. Since $f(x)=A(x-r)^m(1+(x-r)f_0(x))$, we have
\[\frac{1}{f}=\frac{1}{A(x-r)^m}\sum_{i\geq 0}a_i(x-r)^i\]
with $a_0\in k^\times$ and $a_i\in k$ for $i>0$. Thus
\begin{align*}
    A_{r,m}&=\Res^m\big(\frac{g}{f},r\big)\\
    &=\Res^m\big(\frac{B+(x-r)g_0}{A(x-r)^m}\sum_{i\geq 0}a_i(x-r)^i,r\big)\\
    &=\frac{B}{A},
\end{align*}
as desired.
\end{proof}

\begin{rem}
    Lemma~\ref{lem:degree=newton matrix} corroborates Proposition~\ref{prop:local degree at simple zero}. If $f/g$ has a simple root at $r$, then Lemma~\ref{lem:degree=newton matrix} (in particular, its proof) implies that $\deg^u_r(f/g)=\langle\Res(g/f,r)\rangle^u$. The standard trick for computing the residue of a simple pole tells us
    \begin{align*}
        \Res(g/f,r)&=\frac{g(r)}{f'(r)}\\
        &=\frac{g(r)^2}{f'(r)\cdot g(r)-f(r)\cdot g'(r)}\\
        &=(f/g)'(r)^{-1},
    \end{align*}
    since $f(r)=0$. Thus $\langle\Res(g/f,r)\rangle^u=\langle(f/g)'|_r^{-1}\rangle^u$.
\end{rem}

\section{Local-to-global principle, homotopically}\label{sec:ltg homotopically}
Given a map $f:\mb{P}^1_k\to\mb{P}^1_k$, we are interested in understanding the relationship between the unstable degree $\deg^u(f)$ and the unstable local degrees $\deg^u_x(f)$ for $x\in f^{-1}(0)$. In particular, we would like to prove a \textit{local-to-global principle} or \textit{local decomposition} for the homotopical unstable degree $\deg^u(f)$, namely that
\begin{align}\label{eq:local decomp def}
\deg^u(f)=\sum_{x\in f^{-1}(0)}\deg^u_x(f).
\end{align}
In topology, such local decompositions give rise to the Poincar\'e--Hopf theorem for vector bundles. A crucial aspect of Equation~\ref{eq:local decomp def} is that the sum is indexed over the vanishing locus $f^{-1}(0)$ --- we do not only want to express $\deg^u(f)$ in terms of simpler summands, but rather that these summands have an explicit and tractable geometric relationship to the morphism $f$.

In this section, we will prove a homotopical local decomposition 
\begin{align}\label{eq:homotopical local decomp}
f=\sum_{x\in f^{-1}(0)}\bar{f}_x\circ c_x.
\end{align}
In Section~\ref{sec:algebraic local-to-global}, we will obtain an algebraic local decomposition $\deg^u(f)=\sum_{x\in f^{-1}(0)}\deg^u_x(f)$ by analyzing the image of Equation~\ref{eq:homotopical local decomp} in $\GW^u(k)$. We will also discuss Cazanave's decomposition of $\deg^u(f)$ and how it fails to be local. 

Homotopically, sums of maps are given by pinching and folding. That is, given $f,g:X\to Y$, the sum $f+g$ is defined as the composite
\[X\xrightarrow{\cv}X\vee X\xrightarrow{f\vee g}Y\vee Y\xrightarrow{\nabla}Y.\]
The fold is actually unnecessary for our purposes: the wedge is the coproduct in pointed spaces, so maps out of the wedge are in bijection with a set of maps out of each to a fixed target. Post-composition with the fold map would be necessary if we were working with an external wedge sum, which we will not need in this article.

Whenever $X$ is a suspension $X\simeq S^1\wedge X'$, we can construct a pinch map as follows. Any choice of inclusion $S^0\subset S^1$ separates $S^1$ into two disjoint intervals; collapsing $S^0$ closes each of these intervals off into an $S^1$, with the two copies of $S^1$ joined together at the image of $S^0$ (see Figure~\ref{fig:pinch}). One then defines the pinch $\cv:X\to X\vee X$ as
\[S^1\wedge X'\xrightarrow{\cv}(S^1\vee S^1)\wedge X'\simeq(S^1\wedge X')\vee(S^1\wedge X').\]
Here, the last homotopy equivalence holds in any category where smash products distribute over wedge sums, i.e.~any category in which products commute with pushouts.

\begin{figure}
\centering
\begin{tikzpicture}
\draw[very thick] (0,0) circle (20pt);
\draw[very thick] (110pt,0) circle (15pt);
\draw[very thick] (140pt,0) circle (15pt);
\filldraw[red] (0,20pt) circle (2pt);
\filldraw[red] (0,-20pt) circle (2pt);
\filldraw[red] (125pt,0) circle (2pt);
\node[red] (0,0) {$S^0$};
\draw[->,thick] (40pt,0) -- (75pt,0);
\end{tikzpicture}
\caption{Pinching $S^1$}\label{fig:pinch}
\end{figure}
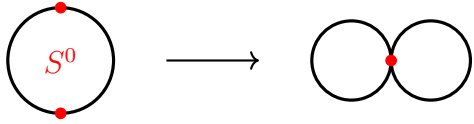

In order to add pointed endomorphisms of $\mb{P}^1$, we need a workable pinch map $\mb{P}^1\to\mb{P}^1\vee\mb{P}^1$. While $\mb{P}^1\simeq S^1\wedge\mb{G}_m$ as a motivic space, the simplicial pinch map $\mb{P}^1\to\mb{P}^1\vee\mb{P}^1$ is unwieldy from the perspective of algebraic geometry. That is, there is not an evident way to describe the simplicial pinch in terms of subschemes of $\mb{P}^1$. This stems from the fact that we need $\mb{A}^1$-invariance to realize $\mb{P}^1$ as a suspension:
\[\begin{tikzcd}
\mb{G}_m\arrow[r]\arrow[d] & *\arrow[d]\\
*\arrow[r] & S^1\wedge\mb{G}_m
\end{tikzcd}
\qquad\simeq\qquad
\begin{tikzcd}
\mb{G}_m\arrow[r]\arrow[d] & \mb{A}^1\arrow[d]\\
\mb{A}^1\arrow[r] & \mb{P}^1.
\end{tikzcd}\]
While the simplicial pinch map gives the usual group structure on $[\mb{P}^1_k,\mb{P}^1_k]\cong\GW^u(k)$ \cite[Lemma 3.20 and Theorem 3.21]{Caz12}, Cazanave noticed that the collapse map can be viewed as an algebraic pinch map \cite[Lemma A.3]{Caz12}. Cazanave used these algebraic pinch maps to define the na\"ive sum $\nplus:[\mb{P}^1,\mb{P}^1]^2\to[\mb{P}^1,\mb{P}^1]$ \cite[\S 3.1]{Caz12}, which give a method for decomposing global maps into ``local'' terms. However, as we will describe in Section~\ref{sec:naive insufficient}, the na\"ively local terms of a map $f:\mb{P}^1\to\mb{P}^1$ fail to be truly local.

While Cazanave only considers the pinch map arising from the collapse map $c_{\{0,\infty\}}:\mb{P}^1_k\to\mb{P}^1_k/(\mb{P}^1_k-\{0,\infty\})$, we will need to consider the pinch maps arising from $c_D:\mb{P}^1_k\to\mb{P}^1_k/(\mb{P}^1_k-D)$ for arbitrary divisors $D\subset\mb{P}^1_k(k)$. We begin by defining the algebraic pinch map associated to $D$.

\begin{lem}\label{lem:inverse to collapse}
Let $x\in\mb{P}^1_k(k)$ be a rational point. Then there exists a homotopy inverse $p:\frac{\mb{P}^1_k}{\mb{P}^1_k-\{x\}}\to\mb{P}^1_k$ to $c_x$ in $\H(k)$.
\end{lem}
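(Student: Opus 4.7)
The plan is to show that $c_x$ is itself an $\mb{A}^1$-weak equivalence in $\H(k)$, at which point any inverse $p$ in the homotopy category furnishes the desired homotopy inverse. The essential geometric input is that $\mb{P}^1_k - \{x\}$ is $\mb{A}^1$-contractible: since $x$ is $k$-rational, any element of $\op{PGL}_2(k)$ carrying $x$ to $\infty$ exhibits an isomorphism $\mb{P}^1_k - \{x\} \cong \mb{A}^1_k$.

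I would first assume $x \neq \infty$, so that the basepoint $\infty \in \mb{P}^1_k - \{x\}$ and all cofiber constructions are naturally pointed. I would then consider the morphism of pointed cofiber sequences
\[
\begin{tikzcd}
\{\infty\} \arrow[r, hook] \arrow[d, hook, "\simeq"'] & \mb{P}^1_k \arrow[d, "\id"] \arrow[r] & \mb{P}^1_k/\{\infty\} \arrow[d, "\phi"] \\
\mb{P}^1_k - \{x\} \arrow[r, hook] & \mb{P}^1_k \arrow[r, "c_x"] & \mb{P}^1_k/(\mb{P}^1_k - \{x\})
\end{tikzcd}
\]
in $\H(k)$. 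The left vertical arrow is an $\mb{A}^1$-weak equivalence because both its source and target are $\mb{A}^1$-contractible (a single rational point versus $\mb{A}^1_k$). The gluing lemma for cofibers in the left proper motivic model structure then implies that the induced map $\phi$ is an $\mb{A}^1$-weak equivalence. Since collapsing the basepoint does nothing, $\mb{P}^1_k/\{\infty\}$ is canonically identified with $\mb{P}^1_k$ as a pointed space, and under this identification $\phi$ coincides with $c_x$. Hence $c_x$ is an $\mb{A}^1$-weak equivalence in $\H(k)$, so its inverse in the homotopy category provides the required $p$.

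For the remaining case $x = \infty$, I would first transport the basepoint of $\mb{P}^1_k$ to some point of $\mb{A}^1_k(k)$ via an automorphism in $\op{PGL}_2(k)$, reducing to the previous case. The main subtlety throughout is bookkeeping around basepoints: forming pointed cofibers of the open inclusion $\mb{P}^1_k - \{x\} \hookrightarrow \mb{P}^1_k$ requires the basepoint of $\mb{P}^1_k$ to lie in $\mb{P}^1_k - \{x\}$, which is exactly what the initial reduction arranges. Beyond that basepoint bookkeeping, the argument is a routine application of the gluing lemma together with the $\mb{A}^1$-contractibility of $\mb{A}^1_k$.
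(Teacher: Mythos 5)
Your argument is correct, but it is genuinely different from what the paper does. The paper disposes of this lemma by citation: it invokes [Hoy14, Lemma 5.4], which asserts that the collapse map $c_x$ is an $\mb{A}^1$-homotopy equivalence and moreover gives an explicit formula for the inverse $p$ (a fact the paper records, although only the homotopy class of $p$ is ever used). You instead argue from first principles: since $x$ is $k$-rational, $\mb{P}^1_k-\{x\}\cong\mb{A}^1_k$ is $\mb{A}^1$-contractible, so the inclusion of the basepoint $\{\infty\}\hookrightarrow\mb{P}^1_k-\{x\}$ is an $\mb{A}^1$-weak equivalence; both quotients are pushouts along monomorphisms, hence homotopy pushouts in the (left proper, injective) motivic model structure, and the gluing lemma identifies $c_x$ with the induced equivalence $\mb{P}^1_k=\mb{P}^1_k/\{\infty\}\to\mb{P}^1_k/(\mb{P}^1_k-\{x\})$, so $c_x$ is invertible in $\H(k)$. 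This is a complete, self-contained proof in the case $x\neq\infty$, which is the case actually used in the main theorems (there $D\subset\mb{A}^1_k(k)$, so the basepoint lies in the collapsed open set); your reduction of $x=\infty$ by transporting the basepoint along $\operatorname{PGL}_2(k)$ is acceptable, though you should say explicitly how $\mb{P}^1_k/(\mb{P}^1_k-\{\infty\})$ is pointed and that $c_\infty$ is taken as a map in $\H(k)$ after this re-pointing — a point the paper also leaves implicit. In short, the citation buys brevity plus an explicit inverse $p$, while your route buys independence from [Hoy14] at the cost of invoking the model-categorical gluing lemma.
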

\begin{proof}
The collapse map $c_x$ is a homotopy equivalence by \cite[Lemma 5.4]{Hoy14}, which implies the existence of a homotopy inverse $p$. In fact, an explicit formula for $p$ is given in \textit{loc.~cit}.
\end{proof}

\begin{defn}\label{def:D-pinch}
Let $D=\{x_1,\ldots,x_n\}\subset\mb{P}^1_k(k)$ be a finite set of rational points. Define the \textit{$D$-pinch map} as the composite
\[\cv_D:\mb{P}^1_k\xrightarrow{c_D}\frac{\mb{P}^1_k}{\mb{P}^1_k-D}\xrightarrow{\simeq}\bigvee_{i=1}^n\frac{\mb{P}^1_k}{\mb{P}^1_k-\{x_i\}}\xrightarrow{\vee_i p_i}\bigvee_{i=1}^n\mb{P}^1_k,\]
where $c_D$ is the collapse map induced by the inclusion $\mb{P}^1_k-D\hookrightarrow\mb{P}^1_k$, the second map is the canonical isomorphism of motivic spaces $\mb{P}^1_k/(\mb{P}^1_k-D)\cong\bigvee_{i=1}^n\mb{P}^1_k/(\mb{P}^1_k-\{x_i\})$ given by \cite[Lemma A.3]{Caz12}, and $p_i=c^{-1}_{x_i}$ (which exists by Lemma~\ref{lem:inverse to collapse}) for each $i$.
\end{defn}

Homotopically, the desired local-to-global principle for the unstable degree should be encoded as the commutativity of the following diagram, which relates our ``global'' map $f:\mb{P}^1_k\to\mb{P}^1_k$ to an appropriate sum $\vee_i(\bar{f}_{x_i}\circ c_{x_i})\circ\cv_D:\mb{P}^1_k\to\mb{P}^1_k$ of its local terms.
\begin{equation}\label{eq:homotopical local decomp diagram}
\begin{tikzcd}
\mb{P}^1_k\arrow[rr,"c_D"]\arrow[d,"f"'] && \frac{\mb{P}^1_k}{\mb{P}^1_k-D}\arrow[rr,"\cong"]\arrow[drr,"\cong"'] && \bigvee_i\frac{\mb{P}^1_k}{\mb{P}^1_k-\{x_i\}}\arrow[rr,"\vee_ip_i"]\arrow[d,equal] && \bigvee_i\mb{P}^1_k\arrow[d,equal]\\
\mb{P}^1_k &&  && \bigvee_i\frac{\mb{P}^1_k}{\mb{P}^1_k-\{x_i\}}\arrow[llll,"\vee_i\bar{f}_{x_i}"] && \bigvee_i\mb{P}^1_k\arrow[ll,"\vee_ic_{x_i}"]
\end{tikzcd}    
\end{equation}

\begin{thm}[Local-to-global principle, homotopically]\label{thm:local-global homotopic}
Let $f:\mb{P}^1_k\to\mb{P}^1_k$ be a pointed rational map with vanishing locus $D=\{x_1,\ldots,x_n\}\subset\mb{P}^1_k(k)$. Then $f=\vee_i(\bar{f}_{x_i}\circ c_{x_i})\circ\cv_D$ in $\H(k)$.
\end{thm}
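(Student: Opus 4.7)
My plan is to unwind the definition of $\cv_D$ and reduce the claim to the commutative square in Remark~\ref{rem:collapse and induced map}. Writing $\varphi$ for the canonical isomorphism $\mb{P}^1_k/(\mb{P}^1_k - D) \xrightarrow{\cong} \bigvee_i \mb{P}^1_k/(\mb{P}^1_k - \{x_i\})$ from \cite[Lemma A.3]{Caz12}, Definition~\ref{def:D-pinch} gives $\cv_D = (\vee_i p_i) \circ \varphi \circ c_D$, so
\[
\vee_i(\bar{f}_{x_i} \circ c_{x_i}) \circ \cv_D \;=\; \vee_i(\bar{f}_{x_i} \circ c_{x_i}) \circ (\vee_i p_i) \circ \varphi \circ c_D.
\]
Because $\bigvee$ is the coproduct in pointed motivic spaces, composition distributes over wedges, and by Lemma~\ref{lem:inverse to collapse} each $c_{x_i} \circ p_i \simeq \id$ in $\H(k)$. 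The middle portion therefore collapses to $\vee_i \bar{f}_{x_i}$, reducing the theorem to the identity $(\vee_i \bar{f}_{x_i}) \circ \varphi \circ c_D = f$ in $\H(k)$.

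To establish this identity, which is the content of the square in Remark~\ref{rem:collapse and induced map}, I would choose pairwise disjoint Zariski open neighborhoods $U_i$ of the $x_i$ with $f(U_i) \subseteq V$ for a common open $V \ni 0$; this is possible because the $x_i$ are distinct rational points with $f(x_i)=0$. Setting $U = \bigsqcup_i U_i$, the restriction $f|_U : U \to V$ induces a morphism $U/(U - D) \to V/(V - \{0\})$, and disjointness gives $U/(U - D) \simeq \bigvee_i U_i/(U_i - \{x_i\})$. Applying excision to each factor identifies this with $\bigvee_i \mb{P}^1_k/(\mb{P}^1_k - \{x_i\})$, and $V/(V - \{0\})$ with $\mb{P}^1_k/(\mb{P}^1_k - \{0\}) \simeq \mb{P}^1_k$; by construction of the $\bar{f}_{x_i}$, the induced map is precisely $\vee_i \bar{f}_{x_i}$. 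Since $f$ itself sends $\mb{P}^1_k - D$ into $\mb{P}^1_k - \{0\}$, it factors through $c_D$, and one checks the resulting factorization matches $(\vee_i \bar{f}_{x_i}) \circ \varphi$.

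The main obstacle is the coherence between the individual excisions $U_i/(U_i - \{x_i\}) \simeq \mb{P}^1_k/(\mb{P}^1_k - \{x_i\})$ used to define each $\bar{f}_{x_i}$ and the global wedge decomposition $\varphi$ of $\mb{P}^1_k/(\mb{P}^1_k - D)$. Concretely, one needs the diagram
\[
\begin{tikzcd}
\bigvee_i U_i/(U_i - \{x_i\}) \arrow[r,"\cong"] \arrow[d,"\cong"'] & U/(U - D) \arrow[d,"\cong"] \\
\bigvee_i \mb{P}^1_k/(\mb{P}^1_k - \{x_i\}) & \mb{P}^1_k/(\mb{P}^1_k - D) \arrow[l,"\varphi"']
\end{tikzcd}
\]
to commute, which follows from the naturality of excision along $U = \bigsqcup_i U_i \hookrightarrow \mb{P}^1_k$ and the explicit construction of $\varphi$ in \cite[Lemma A.3]{Caz12}. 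Once this compatibility is recorded, the local pieces and the global map match up and the theorem follows.
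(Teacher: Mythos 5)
Your proposal is correct and follows the paper's own route: unwind $\cv_D$, cancel each $c_{x_i}\circ p_i\simeq\id$ using Lemma~\ref{lem:inverse to collapse}, and reduce to the commutativity of the trapezoid recorded in Remark~\ref{rem:collapse and induced map} (via \cite[Lemma A.3]{Caz12}). The only difference is that you additionally sketch, via disjoint neighborhoods and compatibility of excisions, why that trapezoid commutes — a point the paper simply attributes to the construction of the $\bar{f}_{x_i}$ in the remark.
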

\begin{proof}
The top three maps of Diagram~\ref{eq:homotopical local decomp diagram} compose to $\cv_D$. Thus if Diagram~\ref{eq:homotopical local decomp diagram} commutes in $\H(k)$, then we obtain the desired result by comparing the leftmost vertical map with the composite around the remaining three edges of the outer rectangle. 

There are three polygons in Diagram~\ref{eq:homotopical local decomp diagram} to consider. The commutativity of the central triangle
\[\begin{tikzcd}
\phantom{\circ}\arrow[rr,"\cong"]\arrow[drr,"\cong"'] && \phantom{\circ}\arrow[d,equal]\\
&& \phantom{\circ}
\end{tikzcd}\]
is simply two copies of the isomorphism $\frac{\mb{P}^1_k}{\mb{P}^1_k-D}\cong\bigvee_i\frac{\mb{P}^1_k}{\mb{P}^1_k-\{x_i\}}$ \cite[Lemma A.3]{Caz12}. The commutativity of the rightmost rectangle
\[\begin{tikzcd}
\phantom{\circ}\arrow[rr,"\vee_ip_i"]\arrow[d,equal] && \phantom{\circ}\arrow[d,equal]\\
\phantom{\circ} && \phantom{\circ}\arrow[ll,"\vee_i c_{x_i}"]
\end{tikzcd}\]
follows from Lemma~\ref{lem:inverse to collapse}, which states that $p_i$ is the homotopy inverse of $c_{x_i}$ in $\H(k)$.

Finally, we need to show that the leftmost trapezoid
\[\begin{tikzcd}
\phantom{\circ}\arrow[d,"f"']\arrow[rr,"c_D"] && \phantom{\circ}\arrow[drr,"\cong"] &&\\
\phantom{\circ} &&  && \phantom{\circ}\arrow[llll,"\vee_i\bar{f}_{x_i}"]
\end{tikzcd}\]
commutes. The commutativity of this diagram is explained in Remark~\ref{rem:collapse and induced map}.
\end{proof}

In summary, we have proved that a pointed rational function is homotopic to the sum of its homotopical local unstable degrees. The subtlety in this story is figuring out \textit{which definition} of addition ensures this local-to-global principle. Theorem~\ref{thm:local-global homotopic} states that taking our addition to be $(-)\circ\cv_D$, where $D$ is the vanishing locus of $f:\mb{P}^1_k\to\mb{P}^1_k$, gives us the desired local-to-global principle for $f$. This justifies the following definition.

\begin{defn}\label{def:D-sum}
Let $D=\{r_1,\ldots,r_n\}\subset\mb{A}^1_k(k)$. The \emph{(homotopical) $D$-sum} is the function
\[\sum_D:=(-)\circ\cv_D:[\mb{P}^1_k,\mb{P}^1_k]^n\to[\mb{P}^1_k,\mb{P}^1_k].\]
If we do not wish to specify the divisor $D$, we will refer to the $D$-sum as a \emph{(homotopical) divisorial sum}.
\end{defn}

Our next goal is to study the algebraic image $\bigoplus_D:=\deg^u\circ\sum_D$ of the $D$-sum and compare it to the usual group structure on $\GW^u(k)$.

\section{Aside on duplicants}\label{sec:duplicants}
Before computing the addition law $\bigoplus_D$ in $\GW^u(k)$, we need to generalize the notion of the discriminant of a polynomial. We begin with some notation.

\begin{notn}\label{notn:duplicant}
    Given $m,n\in\mb{N}$, denote the $m\textsuperscript{th}$ elementary symmetric polynomial in $n$ variables by
    \[\sigma_{m,n}(x_1,\ldots,x_n):=\sum_{1\leq i_1<\ldots<i_m\leq n}x_{i_1}\cdots x_{i_m}.\]
    By convention, we will set $\sigma_{0,n}=1$ and $\sigma_{m,n}=0$ for $m\not\in\{0,\ldots,n\}$. 
    
    Given a monic polynomial of the form $f=\prod_{i=1}^n(x-r_i)^{e_i}$, let $N:=\deg(f)$ and 
    \[\bm{r}_{i,j}:=(\underbrace{r_1,\ldots,r_1}_{e_1\text{ times}},\ldots,\underbrace{r_i,\ldots,r_i}_{e_i-j\text{ times}},\ldots,\underbrace{r_n,\ldots,r_n}_{e_n\text{ times}}).\]
    By Vieta's formulas, the coefficient of $x^i$ in $f/(x-r_\ell)^j=(x-r_\ell)^{e_\ell-j}\prod_{m\neq \ell}(x-r_m)^{e_m}$ is given by $(-1)^{N-i-j}\sigma_{N-i-j,N-j}(\bm{r}_{\ell,j})$. For fixed $\ell$ and varying $0\leq i\leq N-1$ and $1\leq j\leq e_\ell$, we get a matrix of coefficients $\Sigma_\ell(f):=((-1)^{N-i-j}\sigma_{N-i-j,N-j}(\bm{r}_{\ell,j}))_{i,j}$. If we treat $i$ as the row index and $j$ as the column index, then the matrix
    \[\Sigma(f):=\begin{pmatrix} \Sigma_1(f) & \Sigma_2(f) & \cdots & \Sigma_n(f)\end{pmatrix}\]
    is an $N\times N$ square. We will only be interested in $\det\Sigma(f)$ and its square, so we will conflate $\Sigma(f)$ and its transpose $\Sigma(f)^\intercal$ when convenient.
\end{notn}

The heavy notation needed for this setup is unfortunate, as it may obfuscate what $\Sigma(f)$ really is:

\begin{prop}\label{prop:change of basis}
    Let $f/g:\mb{P}^1_k\to\mb{P}^1_k$ be a pointed rational function. Assume that $f=\prod_{i=1}^n(x-r_i)^{e_i}$ with $N:=\sum_{i=1}^n e_i$. Then the change-of-basis matrix from the monomial basis
    \[\left\{\frac{1}{g(x)},\frac{x}{g(x)},\ldots,\frac{x^{N-1}}{g(x)}\right\}\]
    to the Newton basis
    \[\left\{\frac{f(x)}{(x-r_1)g(x)},\ldots,\frac{f(x)}{(x-r_1)^{e_1}g(x)},\ldots,\frac{f(x)}{(x-r_n)g(x)},\ldots,\frac{f(x)}{(x-r_n)^{e_n}g(x)}\right\}\]
    is given by $\Sigma(f)^\intercal$.
\end{prop}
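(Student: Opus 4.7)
The plan is to reduce immediately to a statement about the polynomial ring and then apply Vieta's formulas directly. Since multiplication by $1/g$ is a $k$-linear bijection from $k[x]_{<N}$ onto the $k$-span of $\{x^i/g\}_{i=0}^{N-1}$ that sends $x^i \mapsto x^i/g$ and $f/(x-r_\ell)^j \mapsto f/((x-r_\ell)^j g)$, the change-of-basis matrix in the statement coincides with the change-of-basis matrix between $\{x^i\}_{i=0}^{N-1}$ and $\{f/(x-r_\ell)^j : 1 \le \ell \le n,\, 1 \le j \le e_\ell\}$ inside $k[x]_{<N}$. I would make this reduction first and work in $k[x]_{<N}$ throughout.

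Next I would verify that the $N$ polynomials $f/(x-r_\ell)^j$ actually form a basis of $k[x]_{<N}$: each is a polynomial of degree $N-j \le N-1$, there are exactly $N=\sum_\ell e_\ell$ of them, and their linear independence is the uniqueness clause of the partial fractions decomposition, obtained by dividing any linear relation by $f$ and applying the standard uniqueness statement to the resulting expansion of $0$.

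The computational heart is Vieta's formulas. I would write
\[\frac{f(x)}{(x-r_\ell)^j} = (x-r_\ell)^{e_\ell-j}\prod_{m\neq \ell}(x-r_m)^{e_m},\]
and observe that the right-hand side is monic of degree $N-j$ with tuple of roots (with multiplicity) given by $\bm{r}_{\ell,j}$ from Notation~\ref{notn:duplicant}. Vieta then gives
\[\frac{f(x)}{(x-r_\ell)^j} = \sum_{m=0}^{N-j}(-1)^m\sigma_{m,N-j}(\bm{r}_{\ell,j})\,x^{N-j-m},\]
so reindexing by $i = N-j-m$ shows that the coefficient of $x^i$ is $(-1)^{N-i-j}\sigma_{N-i-j,N-j}(\bm{r}_{\ell,j})$, which is precisely the $(i,j)$-entry of $\Sigma_\ell(f)$ by definition. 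Assembling across $\ell$ yields the block matrix $\Sigma(f)$, whose column indexed by $(\ell,j)$ is exactly the coordinate vector of $f/(x-r_\ell)^j$ in the monomial basis --- that is, the desired change-of-basis matrix, identified with $\Sigma(f)^\intercal$ under the transpose conflation noted in Notation~\ref{notn:duplicant}.

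I expect no serious obstacle here; the argument is essentially bookkeeping on top of Vieta. The only minor subtlety is aligning the conventions so that the output is named $\Sigma(f)^\intercal$ rather than $\Sigma(f)$, which is precisely the transpose flexibility the authors allow themselves after introducing the matrix.
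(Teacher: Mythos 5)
Your proposal is correct and follows essentially the same route as the paper: the paper's proof likewise just observes that, by the Vieta computation already built into Notation~\ref{notn:duplicant}, the entries of $\Sigma_\ell(f)$ are the monomial coefficients of $f/(x-r_\ell)^j$, so that $\Sigma(f)^\intercal$ carries the monomial basis (over $g$) to the Newton basis, with the factor $1/g$ playing no role. Your added check that the Newton elements really form a basis is a harmless extra that the paper leaves implicit (it also follows from the nonvanishing of $\det\Sigma(f)$ computed later).
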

\begin{proof}
    By definition, $\Sigma_\ell(f)$ is the matrix of coefficients of $f/(x-r_\ell),\ldots,f/(x-r_\ell)^{e_\ell}$. This matrix is indexed so that
    \[\Sigma_\ell(f)^\intercal\begin{pmatrix}\frac{1}{g(x)}\\ \vdots\\ \frac{x^{N-1}}{g(x)}\end{pmatrix}=\begin{pmatrix}\frac{f(x)}{(x-r_\ell)g(x)}\\ \vdots\\ \frac{f(x)}{(x-r_\ell)^{e_\ell}g(x)}\end{pmatrix}.\]
    It follows that $\Sigma(f)^\intercal$ is the desired change-of-basis matrix.
\end{proof}

\begin{rem}\label{rem:independent of g}
    Note that the change-of-basis matrix in Proposition~\ref{prop:change of basis} does not depend on $g(x)$, justifying the notation $\Sigma(f)$.
\end{rem}

We will need to work with $\det\Sigma(f)^2$ in Section~\ref{sec:algebraic local-to-global}, so we give it a name and derive a formula for it.

\begin{defn}\label{def:disc}
    Let $f\in k[x]$ be a monic polynomial whose roots are all $k$-rational. Under the conventions listed in Notation~\ref{notn:duplicant}, we define the \textit{duplicant} of $f$ as
    \[\D(f):=\det\Sigma(f)^2.\]
\end{defn}

\begin{ex}
    Let $f=(x-r_1)(x-r_2)^2$. Then $\Sigma_1(f)=\begin{pmatrix} r_2^2 & -2r_2 & 1\end{pmatrix}$ and
    \[\Sigma_2(f)=\begin{pmatrix}
        r_1r_2 & -r_1-r_2 & 1\\
        -r_1 & 1 & 0
    \end{pmatrix}.
    \]
    Setting $f_\red=(x-r_1)(x-r_2)$, we compute
    \begin{align*}
    \D(f)&=\det\begin{pmatrix}
    r_2^2 & -2r_2 & 1\\
    r_1r_2 & -r_1-r_2 & 1\\
    -r_1 & 1 & 0\end{pmatrix}^2\\
    &=(r_1-r_2)^4\\
    &=\disc(f_\red)^2.
    \end{align*}
    See Appendix~\ref{sec:code} for some rough Sage code for computing duplicants.
\end{ex}

The following proposition shows that the duplicant is indeed a generalization of the discriminant.

\begin{prop}\label{prop:duplicant=discriminant}
    Let $f=\prod_{i=1}^n(x-r_i)$ with all $r_i$ distinct. Then $\D(f)=\disc(f)$.
\end{prop}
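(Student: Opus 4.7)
The plan is to leverage the Vandermonde matrix to turn $\Sigma(f)$ into a diagonal matrix whose entries are values of $f'$. Since each root has multiplicity one, $N = n$ and each block $\Sigma_\ell(f)$ reduces to a single column: the coefficient vector of $f_\ell(x) := f(x)/(x - r_\ell) = \prod_{m \neq \ell}(x - r_m)$. Thus $\Sigma(f)$ is the $n \times n$ matrix whose $(i,\ell)$-entry is the coefficient of $x^i$ in $f_\ell$.

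Next, I would introduce the Vandermonde matrix $V := (r_j^i)_{j,i=1,\ldots,n}$ (rows indexed by the root and columns by the exponent) and compute $V \cdot \Sigma(f)$. Its $(j,\ell)$-entry is $\sum_i r_j^i \cdot [x^i]f_\ell = f_\ell(r_j)$. Because $r_j$ is a root of $f_\ell$ for $j \neq \ell$ and $f_\ell(r_\ell) = \prod_{m \neq \ell}(r_\ell - r_m) = f'(r_\ell)$, we get
\[
V \cdot \Sigma(f) = \operatorname{diag}\bigl(f'(r_1),\ldots,f'(r_n)\bigr).
\]
Taking determinants gives $\det V \cdot \det \Sigma(f) = \prod_{\ell} f'(r_\ell)$, and since $\det V = \prod_{i<j}(r_j - r_i)$ is nonzero (the $r_i$ are distinct), we can solve for $\det \Sigma(f)$.

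Squaring yields
\[
\mathfrak{D}(f) = \det\Sigma(f)^2 = \frac{\bigl(\prod_\ell f'(r_\ell)\bigr)^2}{\prod_{i<j}(r_j - r_i)^2}.
\]
Expanding $\prod_\ell f'(r_\ell) = \prod_{\ell \neq m}(r_\ell - r_m)$ and pairing factors $(r_\ell - r_m)(r_m - r_\ell)$ shows the numerator equals $\prod_{i<j}(r_i-r_j)^4$ up to sign; squaring absorbs the sign. Canceling against the denominator leaves $\prod_{i<j}(r_i - r_j)^2$, which is $\disc(f)$ by the standard formula for the discriminant of a monic polynomial with distinct roots.

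The only real obstacle is bookkeeping: verifying the Vandermonde identity with the chosen sign convention for $\Sigma(f)$ (which carries the $(-1)^{N-i-j}$ signs from Vieta's formulas) and confirming that these signs are consistent with the coefficient extraction $[x^i]f_\ell$. Since $f_\ell$ is monic of degree $n-1$ with roots $\{r_m : m \neq \ell\}$, Vieta gives $[x^i]f_\ell = (-1)^{n-1-i}\sigma_{n-1-i,n-1}(r_m:m\neq\ell)$, which matches the $j=1$ specialization of Notation~\ref{notn:duplicant} after setting $N = n$ and $e_\ell = 1$; everything else is routine.
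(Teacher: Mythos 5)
Your proof is correct, but it takes a genuinely different route from the paper's. The paper notes that when all $e_i=1$ the entries of $\Sigma(f)$ are, up to row signs, the partial derivatives $\tfrac{\partial\sigma_{a,n}}{\partial x_\ell}$ evaluated at $x_i=r_i$, so $\det\Sigma(f)=\pm\Jac(\sigma_{1,n},\ldots,\sigma_{n,n})|_{x_i=r_i}$, and then quotes the classical formula $\Jac(\sigma_{1,n},\ldots,\sigma_{n,n})=\prod_{i<j}(x_i-x_j)$, whose square is $\disc(f)$. You instead diagonalize $\Sigma(f)$ by the Vandermonde matrix of the roots: since the $\ell$th column is the coefficient vector of $f_\ell=f/(x-r_\ell)$, evaluation at the roots gives $V\cdot\Sigma(f)=\operatorname{diag}(f'(r_1),\ldots,f'(r_n))$, hence $\det\Sigma(f)=\prod_\ell f'(r_\ell)/\det V$, and squaring together with $\bigl(\prod_\ell f'(r_\ell)\bigr)^2=\prod_{i<j}(r_i-r_j)^4$ and $(\det V)^2=\prod_{i<j}(r_i-r_j)^2$ yields $\D(f)=\disc(f)$. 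Your version is self-contained and elementary: it avoids citing the Jacobian-of-symmetric-polynomials identity and in effect reproves the classical resultant formula $\disc(f)=\pm\prod_\ell f'(r_\ell)$; the paper's version is shorter given the citation and makes the link between $\Sigma(f)$ and symmetric-function Jacobians explicit. Note that your use of $\det V\neq 0$ is exactly where the hypothesis that the $r_i$ are distinct enters, and it is needed (the same hypothesis makes the paper's answer nonzero). Two cosmetic points: your Vandermonde exponents should run over $0,\ldots,n-1$ so that the columns of $V$ match the rows of $\Sigma(f)$, and your sign check against Notation~\ref{notn:duplicant} (the $j=1$, $e_\ell=1$ specialization of Vieta) is indeed the only bookkeeping required, since all signs disappear upon squaring.
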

\begin{proof}
    Since $e_i=1$ for all $i$, the matrices of coefficients take the form
    \[\Sigma_\ell(f):=((-1)^{N-i-1}\sigma_{N-i-1,N-1}(r_1,\ldots,\hat{r}_\ell,\ldots,r_n))_{i=0}^{N-1}.\]
    Note that
    \begin{align*}
        \frac{\partial\sigma_{a,b}}{\partial x_\ell}&=\sum_{1\leq i_1<\ldots<\ell<\ldots<i_a\leq b}x_{i_1}\cdots\hat{x}_\ell\cdots x_{i_a}\\
        &=\sigma_{a-1,b-1}(x_1,\ldots,\hat{x}_\ell,\ldots,x_b)
    \end{align*}
    when $1\leq a\leq b$. It follows that, up to multiplying some rows by $-1$, we have
    \begin{align*}
    \Sigma(f)&=\begin{pmatrix}
    \sigma_{n-1,n-1}(\bm{r}_{1,1}) & \sigma_{n-2,n-1}(\bm{r}_{1,1}) & \cdots & \sigma_{0,n-1}(\bm{r}_{1,1}) \\
    \sigma_{n-1,n-1}(\bm{r}_{2,1}) & \sigma_{n-2,n-1}(\bm{r}_{2,1}) & \cdots & \sigma_{0,n-1}(\bm{r}_{2,1}) \\
    \vdots & \vdots & \ddots & \vdots \\
    \sigma_{n-1,n-1}(\bm{r}_{n,1}) & \sigma_{n-2,n-1}(\bm{r}_{n,1}) & \cdots & \sigma_{0,n-1}(\bm{r}_{n,1})\end{pmatrix}\\
    &=\begin{pmatrix}
    \frac{\partial\sigma_{n,n}}{\partial x_1} & \frac{\partial\sigma_{n-1,n}}{\partial x_1} & \cdots & \frac{\partial\sigma_{1,n}}{\partial x_1}\\
    \frac{\partial\sigma_{n,n}}{\partial x_2} & \frac{\partial\sigma_{n-1,n}}{\partial x_2} & \cdots & \frac{\partial\sigma_{1,n}}{\partial x_2} \\
    \vdots & \vdots & \ddots & \vdots\\
    \frac{\partial\sigma_{n,n}}{\partial x_n} & \frac{\partial\sigma_{n-1,n}}{\partial x_n} & \cdots & \frac{\partial\sigma_{1,n}}{\partial x_n}
    \end{pmatrix}\Bigg|_{x_i=r_i},
    \end{align*}
    where the evaluation sets $x_i=r_i$ for all $1\leq i\leq n$. Thus 
    \[\det\Sigma(f)=\pm\Jac(\sigma_{n,n},\ldots,\sigma_{1,n})|_{x_i=r_i}.\]
    In order to compute $\det\Sigma(f)^2$, it therefore suffices to evaluate the Jacobian determinant of the elementary symmetric polynomials. The computation
    \[\Jac(\sigma_{1,n},\ldots,\sigma_{n,n})=\prod_{1\leq i<j\leq n}(x_i-x_j)\]
    is classical (see e.g.~\cite[pp.~150]{Per51}) and implies 
    \[\Jac(\sigma_{n,n},\ldots,\sigma_{1,n})=(-1)^{\lfloor n/2\rfloor}\prod_{1\leq i<j\leq n}(x_i-x_j).\]
    After evaluating $x_i\mapsto r_i$, this squares to $\disc(f)$.
\end{proof}

Based on computations using the code in Appendix~\ref{sec:code}, we can conjecture (and subsequently prove) a compact formula for $\D(f)$.

\begin{thm}\label{thm:duplicant}
    If $f=\prod_{i=1}^n(x-r_i)^{e_i}$, then
    \[\det\Sigma(f)=\pm\prod_{1\leq i<j\leq n}(r_i-r_j)^{e_ie_j},\]
    and hence
    \[\D(f)=\prod_{1\leq i<j\leq n}(r_i-r_j)^{2e_ie_j}.\]
\end{thm}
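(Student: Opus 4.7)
The plan is to compute $\det \Sigma(f)$ by left-multiplying $\Sigma(f)$ with a Hermite-evaluation matrix whose determinant is already known by a classical formula, and observing that the resulting product has a transparent block structure. Define the Hermite evaluation $\operatorname{Ev} : k[x]_{<N} \to k^N$ by $\operatorname{Ev}(P) := \left(\tfrac{1}{k!} P^{(k)}(r_i)\right)_{1 \le i \le n,\, 0 \le k \le e_i - 1}$. When the $r_i$ are distinct, $\operatorname{Ev}$ is a linear isomorphism (Hermite interpolation), and its matrix $V$ in the monomial basis of the source and the standard basis of the target is the classical \emph{confluent Vandermonde matrix}, whose determinant is $\det V = \pm \prod_{i < j}(r_i - r_j)^{e_i e_j}$. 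Meanwhile, by Proposition~\ref{prop:change of basis} and Remark~\ref{rem:independent of g}, $\Sigma(f)$ is precisely the coefficient matrix (in the monomial basis) of the polynomials $P_{i,j} := f/(x - r_i)^j$; hence the product $V \cdot \Sigma(f)$ is the matrix whose column indexed by $(i,j)$ records $\operatorname{Ev}(P_{i,j})$.

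The key step is to show that $V \cdot \Sigma(f)$ is block-diagonal when both rows and columns are grouped by their root index. Factor $P_{i,j}(x) = (x - r_i)^{e_i - j} Q_i(x)$ with $Q_i(x) := \prod_{\ell \ne i}(x - r_\ell)^{e_\ell}$. For $i' \ne i$, the polynomial $P_{i,j}$ vanishes to order $e_{i'}$ at $r_{i'}$, so $P_{i,j}^{(k)}(r_{i'}) = 0$ for all $0 \le k \le e_{i'} - 1$, forcing the entire $(i', *)$-block of $\operatorname{Ev}(P_{i,j})$ to vanish. For $i' = i$, the polynomial $P_{i,j}$ vanishes to order exactly $e_i - j$ at $r_i$, and a short application of Leibniz shows that its $(e_i - j)$-th Taylor coefficient at $r_i$ equals $Q_i(r_i) = \prod_{\ell \ne i}(r_i - r_\ell)^{e_\ell}$, a quantity independent of $j$. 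Thus the $(i,i)$-block of $V \Sigma(f)$ is an $e_i \times e_i$ matrix whose $(k, j)$-entry vanishes for $k + j < e_i$ and whose anti-diagonal $(k + j = e_i)$ is constant equal to $Q_i(r_i)$; it is anti-triangular and has determinant $\pm Q_i(r_i)^{e_i}$.

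Combining these computations,
\[\det V \cdot \det \Sigma(f) = \det(V \Sigma(f)) = \pm \prod_{i=1}^n Q_i(r_i)^{e_i} = \pm \prod_{i \ne \ell}(r_i - r_\ell)^{e_i e_\ell} = \pm \prod_{i < \ell}(r_i - r_\ell)^{2 e_i e_\ell}.\]
Dividing by $\det V = \pm \prod_{i<j}(r_i - r_j)^{e_i e_j}$ yields $\det \Sigma(f) = \pm \prod_{i<j}(r_i - r_j)^{e_i e_j}$, and squaring produces the stated formula $\D(f) = \prod_{i<j}(r_i - r_j)^{2 e_i e_j}$. The only delicate part of the argument is verifying the block-anti-triangular structure of the diagonal blocks of $V\Sigma(f)$ and identifying the constant anti-diagonal entry $Q_i(r_i)$ via the Leibniz rule; the numerous sign contributions (from reordering rows within each block, from passing between $r_i - r_\ell$ and $r_\ell - r_i$, and from $\det V$ itself) can all be absorbed into a single $\pm$ since the theorem only asserts equality up to sign.
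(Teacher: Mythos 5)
Your proof is correct, but it takes a genuinely different route from the paper. The paper introduces an auxiliary ``slant'' basis, observes that the monomial-to-slant change of basis is unitriangular, and then computes $\det T^{\slant}_{\Nwt}(f)$ by induction on the number of distinct roots; the inductive step (Lemma~\ref{lem:inductive step}) isolates the block coming from multiplication by $(x-r_n)^{e_n}$, reduces to $e_1=\dots=e_{n-1}=1$ by treating the roots as indeterminates and specializing, and exhibits a triangular structure by evaluating at the roots. You instead left-multiply $\Sigma(f)$ by the Hermite-evaluation (confluent Vandermonde) matrix $V$, show that $V\Sigma(f)$ is block-diagonal with anti-triangular diagonal blocks whose constant anti-diagonal entry is $Q_i(r_i)=\prod_{\ell\neq i}(r_i-r_\ell)^{e_\ell}$, and divide by the classical formula $\det V=\pm\prod_{i<j}(r_i-r_j)^{e_ie_j}$. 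Your block computation is right: for $i'\neq i$ the column polynomial $f/(x-r_i)^j$ vanishes to order $e_{i'}$ at $r_{i'}$, and for $i'=i$ its expansion at $r_i$ begins with $Q_i(r_i)(x-r_i)^{e_i-j}$, so $\det(V\Sigma(f))=\pm\prod_i Q_i(r_i)^{e_i}=\pm\prod_{i<j}(r_i-r_j)^{2e_ie_j}$, and the sign ambiguities are harmless since the statement is only up to sign and $\D(f)$ is the square. Two small points: since the paper allows arbitrary characteristic, replace $\frac{1}{k!}P^{(k)}(r_i)$ by the coefficient of $(x-r_i)^k$ in the expansion of $P$ at $r_i$ (the Hasse derivative), which is defined over any field; with that normalization the confluent Vandermonde determinant identity has integer coefficients and holds over any ring, and its nonvanishing for distinct $r_i$ follows from the formula itself, so you do not even need Hermite interpolation as a separate input. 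The trade-off between the two arguments: yours is shorter, avoids induction, and makes the exponent $e_ie_j$ conceptually transparent as a pairing between the Newton-type columns and Taylor-coefficient functionals, but it imports the confluent Vandermonde determinant as a black box (whose standard proofs resemble the paper's induction/specialization), whereas the paper's argument is self-contained and stays in the monomial/Newton basis language reused later in Proposition~\ref{prop:local-to-global-rational-function}.
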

\begin{proof}
Let $N:=\sum_{i=1}^n e_i$. Consider the monomial, slant monomial, and Newton bases of $Q(f):=k[x]/(f)$:
\begin{align*}
    B^\mon(f)&=\bigg\{1,x,x^2,\ldots,x^{N-1}\bigg\},\\
    B^\slant(f)&=\bigg\{1,(x-r_1),\ldots,(x-r_1)^{e_1},\\
    &\qquad (x-r_1)^{e_1}(x-r_2),\ldots,(x-r_1)^{e_1}(x-r_2)^{e_2},\\
    &\qquad\ldots,\\
    &\qquad\prod_{i=1}^{n-1}(x-r_i)^{e_i}\cdot(x-r_n),\ldots,\prod_{i=1}^{n-1}(x-r_i)^{e_i}\cdot(x-r_n)^{e_n-1}\bigg\},\\
    B^\Nwt(f)&=\bigcup_{i=1}^n\bigg\{\frac{f}{x-r_i},\ldots,\frac{f}{(x-r_i)^{e_i}}\bigg\}.
\end{align*}
Given bases $B$ and $B'$, denote the $B$-to-$B'$ change-of-basis matrix by $T^B_{B'}$. To simplify notation, we will write $T^{\mon}_{\Nwt}(f):=T^{B^\mon(f)}_{B^\Nwt(f)}$, and similarly for other pairs of bases among $B^\mon(f),B^\slant(f),B^\Nwt(f)$. By Proposition~\ref{prop:change of basis}, we can prove the present theorem by showing that $\det{T^{\mon}_{\Nwt}(f)}=\pm\prod_{i<j}(r_i-r_j)^{e_ie_j}$. 

Note that $T^{\mon}_{\slant}(f)$ is a triangular matrix with all entries on the diagonal equal to 1, since the elements of $\mon(f)$ and $\slant(f)$ are monic polynomials of degrees $0,1,\ldots,N-1$. In particular, $\det{T^{\mon}_{\slant}}(f)=1$, so $\det{T^{\mon}_{\Nwt}(f)}=\det{T^{\slant}_{\Nwt}(f)}$. We will thus compute $\det{T^{\slant}_{\Nwt}(f)}$.

We conclude the proof by inducting on $n$. The base case is $n=1$, in which $T^\slant_\Nwt(f)$ is a permutation matrix (and thus has determinant $\pm 1$) and $\prod_{1\leq i<j\leq n}(r_i-r_j)^{e_ie_j}$ is an empty product (and thus equal to 1). As the inductive hypothesis, we may therefore assume
\[\det{T^\slant_\Nwt(\tilde{f})}=\pm\prod_{1\leq i<j\leq n-1}(r_i-r_j)^{e_ie_j},\]
where $\tilde{f}=\prod_{i=1}^{n-1}(x-r_i)^{e_i}$ (so that $f=\tilde{f}\cdot(x-r_n)^{e_n}$). We will complete the inductive step in Lemma~\ref{lem:inductive step}.
\end{proof}

\begin{lem}\label{lem:inductive step}
Assume the notation of Theorem~\ref{thm:duplicant} and its proof. If $\det{T^\slant_\Nwt(\tilde{f})}=\pm\prod_{1\leq i<j\leq n-1}(r_i-r_j)^{e_ie_j}$, then $\det{T^\slant_\Nwt(f)}=\pm\prod_{1\leq i<j\leq n}(r_i-r_j)^{e_ie_j}$.
\end{lem}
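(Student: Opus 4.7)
The plan is to block-decompose $T^\slant_\Nwt(f)$ according to the factorization $f = \tilde{f}(x-r_n)^{e_n}$, and then identify the main diagonal block with a scaled copy of $T^\slant_\Nwt(\tilde{f})$ — the scaling coming from multiplication by $(x-r_n)^{e_n}$ in the quotient ring $k[x]/(\tilde{f})$.

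First, reading off degrees of basis elements, one checks that $B^\slant(f) = B^\slant(\tilde{f}) \sqcup B_2$, where $B_2 := \{\tilde{f}(x-r_n)^k : 0 \leq k \leq e_n - 1\}$ consists of the top-degree slant elements (of degrees $N - e_n$ through $N - 1$). The Newton basis also splits naturally as $B^\Nwt(f) = B_3 \sqcup B_4$, with $B_3 := \{f/(x-r_i)^j : i < n,\ 1 \leq j \leq e_i\}$ and $B_4 := \{f/(x-r_n)^j : 1 \leq j \leq e_n\}$. The identity $f/(x-r_n)^j = \tilde{f}(x-r_n)^{e_n - j}$ identifies $B_4$ with $B_2$ as subsets of $k[x]/(f)$ (up to reversal). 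Consequently, ordering slant by $(B^\slant(\tilde{f}),B_2)$ and Newton by $(B_3,B_4)$ puts $T^\slant_\Nwt(f)$ in the block form $\bigl(\begin{smallmatrix} A & 0 \\ C & D \end{smallmatrix}\bigr)$, where $D$ is a reversal permutation matrix of determinant $\pm 1$. The crucial structural point is the vanishing of the top-right block: every element of $B_4$, being literally an element of $B_2$, has trivial expansion involving no element of $B^\slant(\tilde{f})$.

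Next, I identify $A$ as a matrix product. For $i < n$ and $1 \leq j \leq e_i$, polynomial division by $\tilde{f}$ yields
\[(x-r_n)^{e_n}\tilde{f}/(x-r_i)^j = p_{i,j} + \tilde{f}q_{i,j}\]
with $\deg p_{i,j} < N - e_n$ and $\deg q_{i,j} < e_n$. By construction, the coefficients of $p_{i,j}$ in $B^\slant(\tilde{f})$ form the $(i,j)$-column of $A$. Reading the decomposition modulo $\tilde{f}$ gives $p_{i,j} \equiv \bar{u}\cdot \tilde{f}/(x-r_i)^j$ in $k[x]/(\tilde{f})$, where $\bar{u} := (x-r_n)^{e_n}$. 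Hence each column of $A$ is obtained by applying multiplication by $\bar{u}$ on $k[x]/(\tilde{f})$ to the corresponding Newton basis element of $\tilde{f}$ and then expanding in $B^\slant(\tilde{f})$. In matrix form, $A = M_{\bar u} \cdot T^\slant_\Nwt(\tilde{f})$, where $M_{\bar u}$ denotes the matrix of multiplication by $\bar u$ on $k[x]/(\tilde{f})$ in the basis $B^\slant(\tilde{f})$.

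Finally, $\det M_{\bar u}$ is the $k$-algebra norm $N_{k[x]/(\tilde{f})/k}(\bar u)$. Since multiplication by $x$ on $k[x]/(\tilde{f})$ has characteristic polynomial $\tilde{f}$, its eigenvalues (in an extension) are $r_1,\ldots,r_{n-1}$ with multiplicities $e_1,\ldots,e_{n-1}$, and therefore $\det M_{\bar u} = \prod_{i<n}(r_i - r_n)^{e_i e_n}$. Combining this with $\det D = \pm 1$ and the inductive hypothesis $\det T^\slant_\Nwt(\tilde{f}) = \pm \prod_{1 \leq i < j \leq n-1}(r_i - r_j)^{e_i e_j}$ produces the desired formula. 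The main obstacle is purely bookkeeping: one must verify carefully that $B^\slant(f) = B^\slant(\tilde{f}) \sqcup B_2$, which requires parsing how the final row of $B^\slant(\tilde f)$ (truncated at exponent $e_{n-1}-1$) fits inside the next-to-last row of $B^\slant(f)$ (going up to exponent $e_{n-1}$). Once this indexing is settled, the norm calculation and sign accounting are routine.
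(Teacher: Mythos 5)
Your proof is correct, and its skeleton coincides with the paper's: both of you decompose $T^\slant_\Nwt(f)$ into blocks using $B^\slant(f)=B^\slant(\tilde f)\sqcup\{\tilde f(x-r_n)^k\}_{k=0}^{e_n-1}$ and the matching splitting of $B^\Nwt(f)$, note that the block pairing the elements $f/(x-r_n)^j$ with the top slant elements is a permutation matrix while the complementary off-diagonal block vanishes, and thereby reduce the lemma to the determinant of the remaining square block, which (after discarding the $B_2$-components, i.e.\ reducing mod $\tilde f$) is, up to transpose conventions, the matrix of multiplication by $(x-r_n)^{e_n}$ on $k[x]/(\tilde f)$ --- the paper's $P\cdot M$, your $M_{\bar u}$ --- composed with $T^\slant_\Nwt(\tilde f)$. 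Where you genuinely diverge is in evaluating $\det(P\cdot M)$: you identify it as the algebra norm $N_{k[x]/(\tilde f)/k}\big((x-r_n)^{e_n}\big)$ and compute it from the eigenvalues of multiplication by $x$ (characteristic polynomial $\tilde f$, hence eigenvalues $r_1,\ldots,r_{n-1}$ with multiplicities $e_1,\ldots,e_{n-1}$), obtaining $\prod_{i<n}(r_i-r_n)^{e_ie_n}$, in effect a resultant $\operatorname{Res}\big(\tilde f,(x-r_n)^{e_n}\big)$. The paper instead stays matrix-theoretic: it lifts the roots to independent variables and specializes to reduce to the case $e_1=\cdots=e_{n-1}=1$, then evaluates the identity $v_i(x)(x-r_n)^{e_n}=\sum_j a_{i,j}v_j(x)+R_i(x)$ at $x=r_\ell$ to see that $(a_{i,j})$ is triangular with diagonal entries $(r_i-r_n)^{e_n}$. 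Your norm argument is shorter and basis-independent for this step (only the block structure uses the slant basis), at the cost of invoking the spectral description of the multiplication operator over an extension; the paper's route is more elementary and self-contained but requires the generic-roots specialization bookkeeping. Either way one gets the factor $\prod_{i=1}^{n-1}(r_i-r_n)^{e_ie_n}$, which together with $\det D=\pm1$ and the inductive hypothesis yields the claimed formula, so your proof goes through.
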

\begin{proof}
    Note that
\begin{align}\label{eq:slant bases}
    B^\Nwt(f)&=\bigg\{v(x)\cdot(x-r_n)^{e_n}:v(x)\in B^\Nwt(\tilde{f})\bigg\}\cup\bigg\{\frac{f}{x-r_n},\ldots,\frac{f}{(x-r_n)^{e_n}}\bigg\},\nonumber\\
    B^\slant(f)&=B^\slant(\tilde{f})\cup\bigg\{\frac{f}{(x-r_n)^{e_n}},\ldots,\frac{f}{x-r_n}\bigg\}.
\end{align}
    This implies that $T^\slant_\Nwt(f)$ is a block diagonal matrix: the rows of $T^\slant_\Nwt(f)$ corresponding to the $\{f/(x-r_n),\ldots,f/(x-r_n)^{e_n}\}$ are 0 in the columns corresponding to $B^\slant(\tilde{f})$ and a permutation matrix in the remaining columns. Similarly, the rows of $T^\slant_\Nwt(f)$ corresponding to the elements $\{v(x)\cdot(x-r_n)^{e_n}:v(x)\in B^\Nwt(\tilde{f})\}$ are the first $\sum_{i=1}^{n-1}e_i$ rows of the product
    \[M\cdot T^\slant_\Nwt(\tilde{f})\]
    (followed by $e_n$ columns of zeros), where $M$ is the $N\times(\sum_{i=1}^{n-1}e_i)$ matrix corresponding to the linear transformation $Q(\tilde{f})\to Q(f)$ given by multiplication by $(x-r_n)^{e_n}$ on $B^\slant(\tilde{f})$.

    By Equation~\ref{eq:slant bases}, the first $\sum_{i=1}^{n-1}e_i$ rows of $M$ correspond to the elements of $B^\slant(\tilde{f})$. In particular, the matrix $M$ consists of a square matrix $S$ with rows and columns indexed by $B^\slant(\tilde{f})$, followed by $e_n$ rows underneath that are irrelevant for our computations. The matrix $S$ can be written as $P\cdot M$, where $P$ is the matrix of the projection $Q(f)\to Q(\tilde{f})$ corresponding to forgetting the basis elements $B^\slant(f)-B^\slant(\tilde{f})=\{f/(x-r_n)^{e_n},\ldots,f/(x-r_n)\}$.

    All of this setup allows us to state
    \begin{align*}
    \det{T^\slant_\Nwt(f)}&=\pm\det\big(M\cdot T^\slant_\Nwt(\tilde{f})\big)_{i,j=1}^{N-e_n}\\
    &=\pm\det(P\cdot M)\cdot\det{T^\slant_\Nwt(\tilde{f})}.
    \end{align*}
    It thus suffices to prove that $\det(P\cdot M)=\prod_{i=1}^{n-1}(r_i-r_n)^{e_ie_n}$. Note that if we write
    \[F=\prod_{i=1}^n\prod_{j=1}^{e_i}(x-r_{i,j})\]
    and treat $r_{i,j}$ as variables, then $B^\slant(F)$ is a basis for the free $k[r_{1,1},\ldots,r_{n,e_n}]$-module given by polynomials in $k[r_{1,1},\ldots,r_{n,e_n}][x]$ of degree at most $N-1$. Similarly, writing
    \[\tilde{F}=\prod_{i=1}^{n-1}\prod_{j=1}^{e_i}(x-r_{i,j}),\]
    we have that $B^\slant(\tilde{F})$ is a basis for the free $k[r_{1,1},\ldots,r_{n-1,e_{n-1}}]$-module given by polynomials in $k[r_{1,1},\ldots,r_{n-1,e_{n-1}}][x]$ of degree at most $N-e_n-1$. Specializing $r_{i,j}\mapsto r_i$ sends $F\mapsto f$ and $\tilde{F}\mapsto\tilde{f}$. In particular, we can compute $\det(P\cdot M)$ by working with $B^\slant(F)$ and $B^\slant(\tilde{F})$ and then specializing. By inductively specializing, beginning with $r_{n,j}$ and working down to $r_{1,j}$, we may therefore assume that $e_i=1$ for $1\leq i\leq n-1$.
    
    Now let
    \begin{align*}
        v_1&=1,\\
        v_2&=x-r_1,\\
        v_3&=(x-r_1)(x-r_2)\\
        &\vdots\\
        v_n&=(x-r_1)\cdots(x-r_{n-1}),
    \end{align*}
    so that $B^\slant(f)=\{v_1,\ldots,v_n,\frac{f}{(x-r_n)^{e_n}},\ldots,\frac{f}{x-r_n}\}$. We then define constants $a_{i,j}\in k$ by
    \begin{align}\label{eq:v_i}
        v_i(x)\cdot(x-r_n)^{e_n}&=\sum_{j=1}^{N-e_n}a_{i,j}\cdot v_j(x)+R_i(x),
    \end{align}
    where $R_i(x)$ is a $k$-linear combination of the basis elements $\{\frac{f}{(x-r_n)^{e_n}},\ldots,\frac{f}{x-r_n}\}$. As matrices, we have
    \[P\cdot M=(a_{i,j})_{i,j=1}^n,\]
    so we need to show that $\det(a_{i,j})=\prod_{i=1}^{n-1}(r_i-r_n)^{e_n}$ (recall that we have assumed $e_i=1$ for $i<n$). Note that $R_i(r_\ell)=0$ for all $0\leq\ell<n$. Similarly, $v_i(r_\ell)=0$ for $i>\ell$. Substituting $x=r_\ell$ into Equation~\ref{eq:v_i} for $1\leq\ell<n$, we find that
    \[a_{i,j}=\begin{cases}(r_i-r_n)^{e_n} & i=j,\\ 0 & i<j.\end{cases}\]
    This implies that $\det(P\cdot M)=\prod_{i=1}^{n-1}(r_i-r_n)^{e_n}$ when $e_1=\ldots=e_{n-1}=1$, which completes the proof.
\end{proof}

\begin{rem}
If we loosen the requirement that $f$ be monic, we can still define and compute the duplicant of $f$. If $f\in k[x]$ with all roots $r_1,\ldots,r_n$ rational, then we can write $f=c\cdot h$, where $h=\prod_{i=1}^n(x-r_i)^{e_i}$ and $c\in k^\times$. The coefficient matrix $\Sigma(f)$ is now given by scaling each column of $\Sigma(h)$ by $c$, so we find that 
\begin{align*}
    \det\Sigma(f)&=c^{\rank\Sigma(h)}\cdot\det\Sigma(h)\\
    &=c^{\sum_i e_i}\cdot\det\Sigma(h).
\end{align*}
If we define $\D(f):=\det\Sigma(f)^2$ and denote $N:=\deg(f)=\deg(h)=\sum_{i=1}^n e_i$, then it follows from Theorem~\ref{thm:duplicant} that
\[\D(f)=c^{2N}\prod_{1\leq i<j\leq n}(r_i-r_j)^{2e_ie_j}.\]
\end{rem}

Unlike the usual discriminant, the duplicant need not vanish when $f$ has repeated roots. In fact, since $\D(f)$ is the square of the determinant of the monomial-to-Newton change-of-basis matrix, we have $\D(f)\neq 0$.

\section{Local-to-global principle, algebraically}\label{sec:algebraic local-to-global}
Our next goal is to derive an algebraic formula for the homotopical $D$-sum given in Theorem~\ref{thm:local-global homotopic}, which will yield a local-to-global principle for the algebraic unstable degree. We will begin by showing that this sum must be more subtle than the natural group structure on $\GW^u(k)$. To do so, we need to recall Cazanave's monoid operation on $[\mb{P}^1_k,\mb{P}^1_k]$ (whose group completion maps under $\deg^u$ to the standard group structure on $\GW^u(k)$) \cite[\S 3.1]{Caz12}. 

\begin{defn}\label{def:n sum}
Let $f$ be a polynomial with $\deg(f)=n$. Then there is a unique pair of polynomials $u,v$ with $\deg(u)\leq n-2$ and $\deg(v)\leq n-1$ satisfying the B\'ezout identity $fu+gv=1$. Given two pointed rational functions $f_1/g_1$ and $f_2/g_2$, let $u_i,v_i$ be the corresponding pairs of polynomials. Write
\[\begin{pmatrix}f_3 & -v_3\\g_3 & u_3\end{pmatrix}:=
\begin{pmatrix}f_1 & -v_1\\g_1 & u_1\end{pmatrix}
\begin{pmatrix}f_2 & -v_2\\g_2 & u_2\end{pmatrix}.\]
Then the \textit{na\"ive sum} is defined to be $f_1/g_1\nplus f_2/g_2:=f_3/g_3$, which is again a pointed rational function.
\end{defn}

By specifying the monoid structure on $[\mb{P}^1_k,\mb{P}^1_k]$ in Theorem~\ref{thm:cazanave}, Cazanave effectively gives a local-to-global principle for computing the unstable degree in terms of $\Bez^\mon$. However, we will see that this na\"ive local-to-global principle does not satisfy our desired criteria. The shortcoming is that when decomposing a pointed rational function $f/g$ by the na\"ive sum, the resulting ``local'' terms do not vanish at the same points as the original function $f/g$.

Instead, we will show that the local Newton matrix, namely our formula for the unstable local degree, satisfies a local-to-global principle with respect to the divisorial sum (see Definitions~\ref{def:D-sum} and~\ref{def:D-sum-alg}).

\subsection{Insufficiency of the na\"ive local-to-global principle}\label{sec:naive insufficient}
By Theorem~\ref{thm:cazanave}, one can express the unstable degree of a pointed rational function $f/g$ as a sum of unstable degrees of rational functions $f_1/g_1,\ldots,f_n/g_n$ of lesser degree. Iterating this process decreases the degrees of the na\"ive summands, so one can assume that each $f_i/g_i$ vanishes at a single point in $\mb{P}^1_k$. The unstable local degree of such a function should be equal to its unstable (global) degree, so this gives a \textit{na\"ive} local-to-global principle for the unstable degree. Unfortunately, the point of vanishing of $f_i/g_i$ can never belong to the vanishing locus of $f/g$:

\begin{prop}\label{prop:n-sum factor}
Let $f/g:\mb{P}^1_k\to\mb{P}^1_k$ be a pointed rational function. Assume that $f=f_1\cdot f_2$ for some non-constant polynomials $f_1,f_2$. Then there cannot exist $g_1,g_2$ such that $f_i/g_i:\mb{P}^1_k\to\mb{P}^1_k$ are pointed rational functions with $f/g=f_1/g_1\nplus f_2/g_2$.
\end{prop}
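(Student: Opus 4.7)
The plan is to argue by contradiction using the explicit matrix formula for $\nplus$. Suppose $g_1,g_2$ do exist with $f_i/g_i$ pointed rational functions and $f/g = f_1/g_1\nplus f_2/g_2$. Let $n_i = \deg f_i$, so $n_1,n_2\geq 1$ since $f_1,f_2$ are non-constant. Expanding the matrix product in Definition~\ref{def:n sum} gives the identities
\[
f_3 \;=\; f_1 f_2 - v_1 g_2, \qquad g_3 \;=\; g_1 f_2 + u_1 g_2.
\]
The goal is to squeeze enough information out of the equation $f_3 = f$ (which I will first need to justify as a polynomial, not just a rational function, identity) to force $v_1 = 0$ and then invoke B\'ezout.

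The first step is normalization. Since $f_2/g_2$ is pointed, $\deg g_2 \leq n_2 - 1$; by construction of the B\'ezout pair, $\deg v_1 \leq n_1 - 1$. Hence $\deg(v_1 g_2) \leq n_1 + n_2 - 2 < n_1 + n_2 = \deg(f_1 f_2)$, so $f_3$ is monic of degree $n_1 + n_2$ (with $f_i$ monic as per the convention fixed after Theorem~\ref{thm:cazanave}), agreeing with $f = f_1 f_2$. The second step is coprimality of $(f_3, g_3)$: taking determinants of the matrix product gives $f_3 u_3 + v_3 g_3 = \prod_i(f_i u_i + v_i g_i) = 1$, so $\gcd(f_3, g_3) = 1$. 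Since $f/g$ is also in reduced monic form, the equality $f_3/g_3 = f/g$ of rational functions now upgrades to the polynomial equality $f_3 = f$. This forces $v_1 g_2 = 0$, and since $g_2\neq 0$ (it is a denominator of a rational function), we conclude $v_1 = 0$. Substituting into the B\'ezout identity $f_1 u_1 + g_1 v_1 = 1$ yields $f_1 u_1 = 1$, so $f_1 \in k^\times$, contradicting the hypothesis that $f_1$ is non-constant.

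The main subtlety is precisely the upgrade from rational to polynomial equality; without monicity of both $f$ and $f_3$ (together with coprimality of $(f_3,g_3)$), one would only get $f_3 = c\cdot f$ for some $c\in k^\times$, leading to the more delicate equation $v_1 g_2 = (1-c)f_1 f_2$. This case can also be handled by using $\gcd(v_1,f_1)=1$ and $\gcd(g_2,f_2)=1$ from the two B\'ezout identities to conclude $v_1 \mid f_2$ and $g_2 \mid f_1$, whence degree-counting produces the inequalities $n_2 \leq n_1 - 1$ and $n_1 \leq n_2 - 1$, again a contradiction. However, the monic normalization built into the paper's conventions cleanly avoids this case split, making the B\'ezout-style observation the only real content of the proof.
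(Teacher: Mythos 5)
Your proof is correct and follows essentially the same route as the paper's: from the matrix product one gets $f = f_1f_2 - v_1g_2$, forces $v_1g_2 = 0$, hence $v_1 = 0$, and then the B\'ezout identity makes $f_1$ a unit, contradicting non-constancy. The only difference is that you carefully justify the upgrade from the equality of rational functions to the polynomial identity $f_3 = f$ (via monicity and the determinant/coprimality observation), a step the paper's proof takes as implicit in the normalization conventions.
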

\begin{proof}
Suppose that $f/g=f_1/g_1\nplus f_2/g_2$ with $f=f_1\cdot f_2$. By definition of $\nplus$, we have $f=f_1f_2-v_1g_2$, so $v_1g_2=0$. Since $g_2$ is the denominator of a pointed rational function and the ring of polynomials over a field is a domain, we deduce that $v_1=0$. But this implies that $f_1u_1=1$, so $f_1$ is a unit. It follows that $f_1$ must be constant, contradicting our assumption that $f_1,f_2$ are non-constant.
\end{proof}

\begin{cor}\label{cor:no poincare-hopf}
Let $f/g:\mb{P}^1_k\to\mb{P}^1_k$ be a pointed rational function with vanishing locus $\{x_1,\ldots,x_n\}$. For each $x_i$, let $m_i$ be its minimal polynomial. Let $e_i\cdot\deg(m_i)$ be the order of vanishing of $f$ at $x_i$, so that $f=\prod_{i=1}^n m_i^{e_i}$. Then there cannot exist polynomials $g_1,\ldots,g_n$ such that $m_i^{e_i}/g_i:\mb{P}^1_k\to\mb{P}^1_k$ are pointed rational functions satisfying
\begin{align}\label{eq:no poincare-hopf}
\deg^u(f/g)=\sum_{i=1}^n\deg^u(m_i^{e_i}/g_i).
\end{align}
\end{cor}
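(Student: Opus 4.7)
The plan is to deduce the corollary from Proposition~\ref{prop:n-sum factor} via Cazanave's isomorphism. The statement is vacuous when $n=1$ (one may simply take $g_1=g$), so I assume $n\geq 2$. Suppose for contradiction that polynomials $g_1,\ldots,g_n$ satisfy (\ref{eq:no poincare-hopf}). Both sides of (\ref{eq:no poincare-hopf}) represent classes in $\GW^u(k)$ of the same positive rank $\deg f$, and since the naive monoid injects into its group completion on positive-rank elements, Theorem~\ref{thm:cazanave} lifts (\ref{eq:no poincare-hopf}) to an equation of pointed rational functions
\[
f/g \;=\; (m_1^{e_1}/g_1) \nplus (m_2^{e_2}/g_2) \nplus \cdots \nplus (m_n^{e_n}/g_n).
\]

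For the base case $n=2$, the factorization $f = m_1^{e_1}\cdot m_2^{e_2}$ into non-constant polynomials is exactly the hypothesis of Proposition~\ref{prop:n-sum factor}, which immediately yields the contradiction.

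For $n\geq 3$, I would reduce to the binary case by associativity. Regroup the right-hand side as $(m_1^{e_1}/g_1)\nplus(F/G)$, where $F/G$ represents the iterated $(n-1)$-fold naive sum of the remaining summands. The matrix formula defining $\nplus$ yields $f = m_1^{e_1}F - v_1 G$, where $(u_1,v_1)$ is the B\'ezout pair for $(m_1^{e_1},g_1)$. Combined with $f = m_1^{e_1}\cdot\prod_{i\geq 2}m_i^{e_i}$, this gives $m_1^{e_1}(F-\prod_{i\geq 2}m_i^{e_i}) = -v_1 G$; since $\gcd(m_1^{e_1},v_1)=1$ (which is immediate from the B\'ezout identity $m_1^{e_1}u_1+g_1v_1=1$), we conclude $m_1^{e_1}\mid G$. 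Running the analogous argument with each $m_i^{e_i}$ in turn placed in the leading slot, using commutativity of $\nplus$ on homotopy classes, one accumulates divisors on the denominator that ultimately violate the inequality $\deg G<\deg F$ required of a pointed rational function.

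The main obstacle will be making the symmetrization rigorous: the numerator $F$ of a partial naive sum is not simply $\prod_{i\geq 2}m_i^{e_i}$, since it picks up B\'ezout correction terms at every stage, so Proposition~\ref{prop:n-sum factor} does not apply verbatim to the regrouped two-fold sum. The cleanest resolution is likely a direct induction on $n$, exploiting at each step the B\'ezout obstruction of Proposition~\ref{prop:n-sum factor} to force a nontrivial divisibility on the accumulating denominator until the degree bound breaks.
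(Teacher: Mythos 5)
Your outline is the same route the paper takes: translate Equation~\ref{eq:no poincare-hopf} through Theorem~\ref{thm:cazanave} into an $\nplus$-factorization of $f/g$, kill the $n=2$ case with Proposition~\ref{prop:n-sum factor}, and handle $n\geq 3$ by associativity and induction. But as written there are two genuine gaps. First, the ``lift'': equality of unstable degrees only identifies $f/g$ with $(m_1^{e_1}/g_1)\nplus\cdots\nplus(m_n^{e_n}/g_n)$ in $[\mb{P}^1_k,\mb{P}^1_k]$ (equivalently up to naive homotopy, both sides having rank $\deg f$), not as rational functions on the nose, whereas the identity $f=m_1^{e_1}F-v_1G$ you use next is the literal matrix identity and needs literal equality. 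The distinction is not cosmetic: for $f=x(x-1)$, $g=1$, $g_1=1$, $g_2=-1$ one computes $\deg^u(x(x-1)/1)=(\mb{H},-1)=\deg^u(x/1)+\deg^u((x-1)/(-1))$ in $\GW^u(k)$, while $x/1\nplus(x-1)/(-1)=(x^2-x+1)/(x-1)$ is only naively homotopic, not equal, to $x(x-1)/1$. So the passage from the $\GW^u$ identity to a literal $\nplus$-factorization cannot be repaired (the first sentence of the paper's own proof makes the same identification); what Proposition~\ref{prop:n-sum factor} genuinely rules out is the on-the-nose factorization $f/g=m_1^{e_1}/g_1\nplus\cdots\nplus m_n^{e_n}/g_n$, and that is the statement your B\'ezout argument can hope to prove.

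Second, even granting a literal factorization, your $n\geq 3$ step is a plan rather than a proof, as you concede. The single conclusion $m_1^{e_1}\mid G$ does not contradict $\deg G<\deg F=\sum_{i\geq 2}e_i\deg m_i$ once $n\geq 3$, and the proposed symmetrization is not available: $\nplus$ is commutative only up to homotopy, so permuting the summands destroys exactly the literal equality the matrix argument needs, and each regrouping produces a different pair $(F,G)$, so the divisibilities for different $i$ cannot be accumulated on one denominator. The paper disposes of $n\geq 3$ with a one-line appeal to associativity and induction, i.e.\ by regrouping as a two-fold sum --- which, as you correctly observe, is not verbatim an instance of Proposition~\ref{prop:n-sum factor}, since the inner numerator $F$ need not equal $\prod_{i\geq 2}m_i^{e_i}$. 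So you have put your finger on a real subtlety that the paper glosses over, but your proposal does not close it; until the inductive step is actually carried out (for instance by a finer analysis of coprimality and degree constraints on the partial sums), the argument is incomplete.
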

\begin{proof}
By~\cite[Theorem 3.6]{Caz12}, finding $g_1,\ldots,g_n$ satisfying Equation~\ref{eq:no poincare-hopf} is equivalent to finding $g_1,\ldots,g_n$ such that
\[\frac{f}{g}=\frac{m_1^{e_1}}{g_1}\nplus\cdots\nplus\frac{m_n^{e_n}}{g_n}.\]
Since $\nplus$ is associative, we can reduce via induction to the $n=2$ case. It now follows from Proposition~\ref{prop:n-sum factor} that such a factorization cannot exist.
\end{proof}

Corollary~\ref{cor:no poincare-hopf} tells us that the B\'ezoutian with respect to the monomial basis will not give a satisfactory \textit{unstable local} degree, in contrast with the unstable \textit{global} degree~\cite{Caz12} and the \textit{stable} local degree~\cite{BMP21}. This is because the local terms in any na\"ive decomposition will not vanish at any points in the vanishing locus of our original function.

\subsection{Divisorial sums of local terms}
We have just seen that in general, the na\"ive sum will not give us a satisfactory local-to-global principle. In Theorem~\ref{thm:local-global homotopic}, we saw that our desired local-to-global principle requires that we work with the homotopical sum $(-)\circ\cv_D$, where $D$ is the vanishing locus of the pointed rational map that we are trying to decompose. In contrast, the na\"ive sum is defined homotopically by collapsing the complement of the locus $\{0,\infty\}$. In other words, the na\"ive sum fails to give the desired local-to-global principle, because the vanishing locus of a pointed rational map is generally not a subset of $\{0,\infty\}$.

Our next goal is to compute the image in $\GW^u(k)$ (under the B\'ezoutian) of the addition law $\sum_D:=(-)\circ\cv_D$. We will also call this image the (algebraic) $D$-sum, denoted $\oplus_D$, which will depend on $D$. We will use Theorem~\ref{thm:local-global homotopic}, our formula for the unstable local degree, and Cazanave's formula for the unstable global degree to compute $\oplus_D$.

\begin{defn}\label{def:D-sum-alg}
Let $D=\{r_1,\ldots,r_n\}\subset\mb{P}^1_k(k)$ be a finite set of rational points. The \emph{(algebraic) $D$-sum} is the function
\[\bigoplus_D:\bigoplus_{i=1}^n\GW^u(k)\to\GW^u(k)\]
satisfying $\bigoplus_D\deg^u(f_i)=\deg^u(\vee_i f_i\circ\cv_D)$ for any $n$-tuple $f_1,\ldots,f_n:\mb{P}^1_k\to\mb{P}^1_k$ of pointed rational maps. In other words, $\bigoplus_D:=\deg^u\circ\sum_D\circ(\deg^u)^{-1}$.
\end{defn}

As with the homotopical $D$-sum, we will say \emph{(algebraic) divisorial sum} when we do not wish to specify the divisor $D$.

Our goal is to give an algebraic formula for $\oplus_D$ for any $n$-tuple of elements in $\GW^u(k)$. As a first step, we can use the homotopical local-to-global principle (Theorem~\ref{thm:local-global homotopic}) and our formula for the unstable local degree (Lemma~\ref{lem:degree=newton matrix}) to compute a formula for $\oplus_D$ in some cases.

\begin{prop}\label{prop:local-to-global-rational-function}
    Let $f/g:\mb{P}^1_k\to\mb{P}^1_k$ be a pointed rational function, with vanishing locus $D=\{r_1,\ldots,r_n\}$. Let $\deg^u_{r_i}(f/g)=(\beta_i,d_i)\in\GW^u(k)$, and let $m_i = \rank \beta_i$ and $m=\sum_i m_i$. Then
    \begin{equation}\label{eq:d-sum-local-to-global}
    \bigoplus_D((\beta_1,d_1),\ldots,(\beta_n,d_n))=\big(\bigoplus_{i=1}^n\beta_i,\prod_{i=1}^n d_i\cdot\prod_{i<j}(r_i -r_j)^{2m_i m_j}\big).
    \end{equation}
\end{prop}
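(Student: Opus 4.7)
By Theorem~\ref{thm:local-global homotopic}, the morphism $f/g$ is $\mb{A}^1$-homotopic to $\vee_i(\bar{f}_{r_i}\circ c_{r_i})\circ\cv_D$, so the left-hand side of the proposition equals $\deg^u(f/g)$ by Definition~\ref{def:D-sum-alg}. It therefore suffices to compute $\deg^u(f/g)$ and match it against the stated formula. By Cazanave's Theorem~\ref{thm:cazanave}, $\deg^u(f/g) = (\Bez^\mon(f/g),\det\Bez^\mon(f/g))$, and by Lemma~\ref{lem:degree=newton matrix}, $(\beta_i,d_i) = \Nwt_{r_i}(f/g)$; in particular $\beta_i$ is the isometry class of the local Newton matrix, $d_i = \det\Nwt_{r_i}(f/g)$, and $m_i$ equals the multiplicity of $r_i$ as a root of $f$.

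The heart of the argument is to show that the B\'ezoutian expressed in the global Newton basis $B^\Nwt(f/g)$ is block-diagonal, with blocks given exactly by the local Newton matrices:
\[
\Bez^\Nwt(f/g) \;=\; \bigoplus_{i=1}^n \Nwt_{r_i}(f/g).
\]
Since $Q(f/g)$ splits as $\prod_i Q_{r_i}(f/g)$ via partial fractions, each element $f/(g(x-r_i)^j)$ of $B^\Nwt_{r_i}(f/g)$ lies in the summand $Q_{r_i}(f/g)$ (it is divisible by $(x-r_\ell)^{m_\ell}$ for every $\ell\neq i$), so the global Newton basis is adapted to this product decomposition. The B\'ezoutian, being a trace-type element of $Q(f/g)\otimes_k Q(f/g)$, respects this decomposition, which forces its off-diagonal blocks to vanish; the diagonal blocks can then be read off as the local Newton matrices by comparing the B\'ezoutian polynomial with the local Laurent expansions of $g/f$ about each $r_i$ used to define $\Nwt_{r_i}(f/g)$. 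I plan to verify this block-diagonality by explicit expansion of the B\'ezoutian polynomial in the tensor basis arising from $B^\Nwt(f/g)$. Once it is in hand, invariance of the isometry class under change of basis yields $\Bez^\mon(f/g) = \Bez^\Nwt(f/g) = \bigoplus_i\beta_i$ in $\GW(k)$, and $\det\Bez^\Nwt(f/g) = \prod_i d_i$.

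To finish the unstable coordinate, I relate $\det\Bez^\mon(f/g)$ and $\det\Bez^\Nwt(f/g)$ via the monomial-to-Newton change of basis, identified in Proposition~\ref{prop:change of basis} as $\Sigma(f)^\intercal$. The standard transformation law for bilinear forms under change of basis gives
\[
\det\Bez^\mon(f/g) = (\det\Sigma(f))^2\cdot\det\Bez^\Nwt(f/g) = \D(f)\cdot\prod_{i=1}^n d_i,
\]
and Theorem~\ref{thm:duplicant}, applied with $e_i = m_i$, evaluates $\D(f) = \prod_{i<j}(r_i-r_j)^{2m_im_j}$. Combining these computes both coordinates of $\deg^u(f/g)$ and matches the claimed formula. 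The main obstacle is the explicit verification of the block-diagonal decomposition of $\Bez^\Nwt(f/g)$: while conceptually natural from the idempotent decomposition of $Q(f/g)$, a careful matrix-level computation is needed to confirm that the off-diagonal coefficients of the B\'ezoutian polynomial vanish when expanded in the Newton tensor basis, and that each diagonal block recovers exactly the matrix of Definition~\ref{def:newton matrix}.
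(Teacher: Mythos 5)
Your proposal is correct and follows essentially the same route as the paper's proof: reduce to computing $\deg^u(f/g)$ via Theorem~\ref{thm:local-global homotopic}, express the B\'ezoutian form in the Newton basis, where it becomes the block sum of the local Newton matrices identified with the $(\beta_i,d_i)$ by Lemma~\ref{lem:degree=newton matrix}, and track the determinant through the monomial-to-Newton change of basis $\Sigma(f)^\intercal$ using Theorem~\ref{thm:duplicant}. The block-diagonality that you flag as the main remaining obstacle is exactly the identity the paper imports from \cite{KW20} (Section 3, Equation (22), and Definition 7), and your sketched justification via the idempotent decomposition of $Q(f/g)$ is sound, so no essential new work is required there.
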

\begin{proof}
As described in Section~\ref{sec:unstable gw}, we can describe elements of $\GW^u(k)$ in terms of $k$-vector space, a choice of basis, and the Gram matrix of a symmetric bilinear form with respect to that basis. Recall the notation $Q(f/g):=k[x,\frac{1}{g}]/(\frac{f}{g})$, and consider the following bases of $Q(f/g)$:
\begin{align*}
    B^\mon(f)&=\bigg\{1,x,x^2,\ldots,x^{m-1}\bigg\},\\
     B^{\mon/g}(f)&=\bigg\{\frac{1}{g},\frac{x}{g},\frac{x^2}{g},\ldots,\frac{x^{m-1}}{g}\bigg\},\\
     B^{\Nwt}(f) &= \bigcup_{i=1}^n\bigg\{ \frac{f(x)}{(x-r_i)},\frac{f(x)}{(x-r_i)^2}, \ldots,  \frac{f(x)}{(x-r_i)^{m_i}} \bigg\},\\
     B^{\Nwt/g}(f) &= \bigcup_{i=1}^n\bigg\{ \frac{f(x)}{(x-r_i)g(x)},\frac{f(x)}{(x-r_i)^2 g(x)}, \ldots,  \frac{f(x)}{(x-r_i)^{m_i}g(x)} \bigg\}.
    \end{align*}
By \cite[Theorem~3.6]{Caz12}, the Gram matrix of $\deg^u(f/g)$ with respect to $B^\mon(f)$ is given by $\Bez^\mon(f/g)=(a_{ij})$, where
\begin{equation}\label{eq:cazanave bezoutian}
\frac{f(x)g(y)-f(y)g(x)}{x-y}=\sum_{i,j}a_{ij}x^iy^j.
\end{equation}
Dividing both sides of Equation~\ref{eq:cazanave bezoutian} by $g(x)g(y)$, we obtain
\begin{equation*}
\frac{f(x)/g(x)-f(y)/g(y)}{x-y}=\sum_{i,j}a_{ij}\frac{x^i}{g(x)}\frac{y^j}{g(y)}.
\end{equation*}
As described in \cite[Section 3 and Equation (22)]{KW20}, the (global) Newton matrix is given by $\Nwt(f/g)=(a_{ij})$, where
\[\frac{f(x)/g(x)-f(y)/g(y)}{x-y}=\sum_{i,j}a_{ij}v_i(x)v_j(y)\]
and $\{v_1,\ldots,v_{m-1}\}=B^{\Nwt/g}(f)$. This means that $\deg^u(f/g)$ and $\Nwt(f/g)$ are related by changing basis from $B^{\mon/g}(f)$ to $B^{\Nwt/g}(f)$. Note that the relevant change-of-basis matrix is identical to the change-of-basis matrix $T^\mon_\Nwt$ from $B^\mon(f)$ to $B^\Nwt(f)$. Moreover, $\Nwt(f/g)=\bigoplus_{i=1}^n\Nwt_{r_i}(f/g)$ by \cite[Definition 7]{KW20}. In summary, we find that
\begin{align*}
    \deg^u(f/g)&=(T^\mon_\Nwt)^\intercal\cdot\Nwt(f/g)\cdot T^\mon_\Nwt\\
    &=(T^\mon_\Nwt)^\intercal\cdot\bigoplus_{i=1}^n\Nwt_{r_i}(f/g)\cdot T^\mon_\Nwt\\
    &=(T^\mon_\Nwt)^\intercal\big(\sum_{i=1}^n\deg^u_{r_i}(f/g)\big)T^\mon_\Nwt,
\end{align*}
where the last equality follows from Lemma~\ref{lem:degree=newton matrix}. We conclude the proof by taking determinants and recalling that $\det(T^\mon_\Nwt)^2=\prod_{i<j}(r_i-r_j)^{2m_im_j}$ by Theorem~\ref{thm:duplicant}.
\end{proof}

\begin{rem}
    Note that we can rewrite Equation~\ref{eq:d-sum-local-to-global} as
    \[\bigoplus_D((\beta_1,d_1),\ldots,(\beta_n,d_n))=\big(\bigoplus_{i=1}^n\beta_i,\D(f)\cdot\prod_{i=1}^n d_i\big),\]
    where $\D(f)$ is the duplicant of $f$.
\end{rem}

\begin{rem}
As written, Proposition~\ref{prop:local-to-global-rational-function} does not come close to the generality we are aiming for in Theorem~\ref{thm:D-sum-all-f_i}. Indeed, not all pointed maps $\P^1\to\P^1$ in the unstable $\A^1$-homotopy category are represented by rational functions. Moreover, not every element $(\beta,d)\in\GW^u(k)$ arises as the unstable local degree of some rational function $f/g$, even when $\rank\beta\geq 0$ \cite{Quick-Strand-Wilson}. The paper \cite{Barth-Hornslien-Quick-Wilson} studies endomorphisms of $\P^1$ with unstable degree of negative rank explicitly by lifting to the Jouanolou device, but this again fails to give, for an arbitrary $(\beta,d)\in\GW^u(k)$, an explicit pointed map $\P^1\to\P^1$ whose unstable local degree at some point is $(\beta,d)$. We will bridge the gap between Proposition~\ref{prop:local-to-global-rational-function} and Theorem~\ref{thm:D-sum-all-f_i} in the next section by using Proposition~\ref{prop:local-to-global-rational-function} as the base case of an induction argument. 
\end{rem}

\section{Proof of Theorem \ref{thm:D-sum-all-f_i}}\label{sec:general case}
It is a theorem of Morel \cite[Section 7.3, 7.26]{Mor12} that the Hopf map $\eta: \A^2 - \{0\} \to \P^1$ defined $\eta(x,y)= [x,y]$ and the map $\iota_{1,\infty}: \P^1 \to \P^{\infty}$ classifying $\mathcal{O}(1)$ form an $\A^1$-fiber sequence
\begin{equation}\label{eq:central for pi1}
\A^2 - \{0\} \to \P^1 \to \P^{\infty},
\end{equation}

which induces a central extension
\begin{equation}\label{eq:Kmw2pA1Gm_extension}
1 \to \Kmw_2 \cong  \pA (\A^2 - \{0\})  \to \pA(\P^1) \stackrel{\wp}{\to} \pA( \P^{\infty}) \cong \G_m \to 1.
\end{equation} 

We will base our schemes at $(1,0) \in \A^2 - \{0\} $ and the corresponding images under the appropriate maps.

\begin{notn}
    We will frequently need to use the inverse of the isomorphism $\deg^u$ to convert from elements in $\GW^u(k)$ to their associated elements in $[\P^1,\P^1]$. To simplify notation, we will define
    \[\phi:=(\deg^u)^{-1}:\GW^u(k)\to[\P^1,\P^1].\]
\end{notn} 

\begin{lem}\label{lem:wppabeta}
Let $\beta^u=(\beta,d)$ in $\GW^u(k)$ have rank $m$. Then $\wp \circ \pA (\phi\beta^u) = m \circ \wp$, where $m$ denotes the map $\G_m \to \G_m$ given by $z \mapsto z^m$.
\end{lem}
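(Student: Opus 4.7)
The strategy is to reduce the claim to the computation $f^*\mc{O}(1) \cong \mc{O}(\rank \deg^u f)$, using the classifying-map interpretation of $\iota_{1,\infty}$.

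Via $\deg^u$ (Theorem~\ref{thm:cazanave}) and Morel's anabelian theorem~\eqref{MorelThmP1Anabelian}, the element $\beta$ lifts to some $f_\beta \in ([\P^1, \P^1], \nplus)^{\mr{gp}}$ with $\pA(\beta) = \pA(f_\beta)$. By construction of~\eqref{eq:central for pi1}, $\wp = \pA(\iota_{1,\infty})$, so
\[
\wp \circ \pA(\beta) \;=\; \pA(\iota_{1,\infty} \circ f_\beta),
\]
and the composite $\iota_{1,\infty} \circ f_\beta : \P^1 \to \P^\infty$ classifies the line bundle $f_\beta^* \mc{O}(1)$. The first step is then to identify this line bundle as $\mc{O}(m)$. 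For a genuine pointed rational function $f = p/q$ with $p$ monic, Theorem~\ref{thm:cazanave} and Proposition~\ref{prop:bezoutian of polynomial} show that $\rank \deg^u(f)$ equals the size of the B\'ezoutian matrix, which is $\max(\deg p, \deg q)$, i.e.\ the degree of $f$ as a morphism of curves; that integer is precisely the $n$ with $f^* \mc{O}(1) \cong \mc{O}(n)$. Since $\pA(\P^1)$ is already a group, $\pA$ factors through the group completion, and the same identification holds additively for any $\beta$ of rank $m \in \mb{Z}$.

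The second step is to recognize the classifying map of $\mc{O}(m)$ as $\iota_{m,\infty} = \mu^m \circ \iota_{1,\infty}$, where $\mu^m : \P^\infty \to \P^\infty$ denotes the $m$-fold iterate of the $H$-space multiplication on $\P^\infty$ induced by tensor product of line bundles. Under the isomorphism $\pA(\P^\infty) \cong \G_m$, this $H$-space multiplication corresponds to the group operation on $\G_m$, so $\pA(\mu^m)$ is the $m$-th power map $z \mapsto z^m$. Assembling the pieces:
\[
\wp \circ \pA(\beta) \;=\; \pA(\iota_{1,\infty} \circ f_\beta) \;=\; \pA(\iota_{m,\infty}) \;=\; \pA(\mu^m) \circ \pA(\iota_{1,\infty}) \;=\; m \circ \wp.
\]

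The main point requiring care is the identification of $\pA(\mu^m)$ with $z \mapsto z^m$, i.e.\ that $\pA$ sends the tensor-product $H$-space structure on $\P^\infty$ to the multiplicative group structure on $\G_m$ compatibly with the chosen isomorphism $\pA(\P^\infty) \cong \G_m$; this is folklore, but one must keep track of basepoints. The case of negative $m$ then works verbatim, since the line-bundle identification in the first step is additive in $\beta$ and both sides of the desired equality are homomorphisms out of $\pA(\P^1)$.
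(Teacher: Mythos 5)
Your argument is correct and takes essentially the same route as the paper: both factor through $\P^\infty$, identify $\iota_{1,\infty}\circ\beta$ with the classifying map of $\mc{O}(m)$ using that pointed maps $\P^1\to\P^\infty$ are detected by the degree of the pulled-back line bundle ($[\P^1,\P^\infty]\cong\mb{Z}$), and then apply $\pA$; your $\mu^m$ is the paper's $Bm$, for which $\pA(Bm)=m$ is immediate from naturality of $\pA(B\G_m)\cong\G_m$. The only differences are cosmetic: you justify the identification of the pullback degree with $\rank\deg^u(\beta)$ via the B\'ezoutian size plus additivity over the group completion (a point the paper simply asserts), and you phrase the power map via the $H$-space structure rather than via $B$ applied to $z\mapsto z^m$.
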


\begin{proof}
Given a sheaf of pointed sets $\mc{F}$, the \emph{contraction} of $\mc{F}$ is the sheaf
\[\mc{F}_{-1}:=\underline{\mr{Map}}(\mb{G}_m,\mc{F}).\]
See \cite[Theorem 4.37 p.125]{Voev_Sus_Fr_Cycles_tr_motivic_homology}, \cite[Remark 2.23]{Mor12} or \cite[Section 4]{Bac24}. Morel computes the unstable $\A^1$-homotopy classes $[\P^1, \P^{n}]$ by 
\[
[\P^1, \P^{n}] \cong [\Sigma \G_m, \P^n] \cong \pA(\P^n)_{-1}(k).
\] Furthermore, Morel shows the $\G_m$-torsor $\A^{n+1} - \{0\} \to \P^n$ is the universal cover, whence $\pA(\P^n) \cong \G_m$ (for $n\geq 2$) and $\pA(\P^n)_{-1} \cong \mb{Z}$. Thus $[\P^1, \P^{\infty}] \cong \colim_n [\P^1, \P^{n}] \cong \mb{Z}$. An explicit isomorphism $[\P^1, \P^{\infty}] \to \mb{Z}$ sends a map in $[\P^1, \P^{\infty}]$ to the degree of the corresponding pullback of $\mathcal{O}(1)$.

The composition $\P^1 \xrightarrow{\phi\beta^u} \P^1 \xrightarrow{\iota_{1,\infty}} \P^{\infty}$ classifies $\mathcal{O}(m)$. So does $\P^1  \stackrel{\iota_{1,\infty}}{\longrightarrow} \P^{\infty} \stackrel{Bm}{\to} \P^{\infty}$, where $Bm$ is defined by $B \G_m \simeq \P^{\infty}$ and the map $m:\G_m \to \G_m$. Thus $Bm \circ \iota_{1,\infty} \simeq  \iota_{1,\infty} \circ \phi\beta^u $ by Morel's computation $[\P^1, \P^{\infty}] \cong \mb{Z}$. To conclude, note $\pA (Bm \circ \iota_{1,\infty} ) = m \circ \wp$ and $\pA(\iota_{1,\infty} \circ \phi\beta^u) = \wp \circ \pA(\phi\beta^u)$.
\end{proof}

Let $\iota_j : \P^1 \to \bigvee_{i=1}^n \P^1$ denote the inclusion of the $j$th summand. Let $s_j: \bigvee_{i=1}^n \P^1 \to \P^1$ denote the map which is the identity on the $j$th summand and the constant map to the point on the other summands.

\begin{defn}
A map $c: \P^1 \to \bigvee_{i=1}^n \P^1$ with the property that $s_j \circ c$ is equivalent to the identity map on $\P^1$
\[
s_j \circ c \simeq 1_{\P^1}
\] for $j=1,\ldots,n$ will be said to be a \emph{good pinch map}. 
\end{defn}

\begin{ex}[Simplicial pinch]\label{ex:c+}
The standard isomorphism $\P^1 \xrightarrow{\simeq} \Sigma \G$ (from $\P^1 \cong \A^1 \cup_{\G_m} \A^1$) and the standard pinch map $S^1 \to S^1 \vee S^1$ define a good pinch map $c_{+}:\P^1 \to  \P^1 \vee \P^1$. By iterating this construction, we more generally obtain a good pinch map
\[c_+:\P^1\to\bigvee_{i=1}^n\P^1.\]
\end{ex}

\begin{ex}[Divisorial pinch]\label{ex:CD}
$\cv_D: \P^1 \to \P^1/(\P^1 - D) \stackrel{\simeq}{\leftarrow} \bigvee_{i = 1}^n \P^1$  is a good pinch map for $D = \{ r_1, \ldots, r_n\}$ with $r_i$ in $k$ and $r_i \neq r_j$ for $i \neq j$. 
\end{ex}

\begin{defn}\label{df:a}
Let $a$ denote the left adjoint to the inclusion of strongly $\A^1$-invariant sheaves of groups on the Nisnevich site into sheaves of groups \cite[p. 184]{Mor12}. 
\end{defn}

\begin{lem}\label{lem:pi1deg0}
Let $c$ be a good pinch map. Let $\beta^u=(\beta,d)$ and $\beta^u_0=(\beta_0,d_0)$ be elements of $\GW^u(k)$ with $\beta_0$ of rank $0$. Then
\begin{enumerate}
\item \label{it:pi1deg0:landsK2} $\pA(\phi\beta^u_0)$ factors through $\pA(\eta)$, so that we obtain a map of the form $\pA(\phi\beta^u_0): \pA(\P^1) \to \Kmw_2$.
\item \label{it:pi1deg0:additive}  For all $U$ in $\Sm_k$ and all $\gamma$ in $ \pA(\P^1)(U)$, we have \[
\pA((\phi\beta^u \vee \phi\beta^u_0) \circ c )(\gamma)= \pA(\phi\beta^u)(\gamma)\pA(\phi\beta^u_0)(\gamma).
\] In particular, 
\[
\pA(\phi(\beta^u+\beta^u_0))(\gamma) = \pA(\phi\beta^u)(\gamma)\pA(\phi\beta^u_0)(\gamma).
\]
\end{enumerate}
\end{lem}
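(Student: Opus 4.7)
The plan is to settle part (\ref{it:pi1deg0:landsK2}) first using the central extension and Lemma~\ref{lem:wppabeta}, and then use the centrality of $\Kmw_2$ in $\pA(\P^1)$ in a word-rearrangement argument for part (\ref{it:pi1deg0:additive}). For (\ref{it:pi1deg0:landsK2}), recall that by Equation~\ref{eq:Kmw2pA1Gm_extension}, $\Kmw_2 = \ker(\wp)$. Hence showing $\pA(\beta_0)$ factors through $\Kmw_2$ reduces to showing $\wp \circ \pA(\beta_0) = 1$. Lemma~\ref{lem:wppabeta} applied with $m = \rank(\beta_0) = 0$ gives $\wp \circ \pA(\beta_0) = 0 \circ \wp$, where $0: \G_m \to \G_m$ is the trivial (constant at $1$) map. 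This closes (\ref{it:pi1deg0:landsK2}) and crucially yields: $\pA(\beta_0)$ takes values in the central subgroup $\Kmw_2 \subset \pA(\P^1)$.

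For (\ref{it:pi1deg0:additive}), I will use Morel's description of $\pA(\bigvee_{i=1}^2 \P^1)$ as $a(\pA(\P^1) * \pA(\P^1))$ (with $a$ from Definition~\ref{df:a}). Working stalk-locally in the Nisnevich topology, I will represent $\pA(c)(\gamma)$ as a word
\[
\pA(c)(\gamma) = \pA(\iota_{j_1})(z_1)\,\pA(\iota_{j_2})(z_2) \cdots \pA(\iota_{j_N})(z_N),
\]
with each $j_\ell \in \{1,2\}$ and each $z_\ell$ in (a refinement of) $\pA(\P^1)(U)$. The good pinch condition $s_j \circ c \simeq 1_{\P^1}$ combined with $s_j \circ \iota_k = \delta_{jk}$ forces
\[
\prod_{\ell :\, j_\ell = j} z_\ell = \gamma \quad \text{for } j = 1,2.
\]
Applying $\pA(\beta \vee \beta_0)$, which restricts to $\pA(\beta)$ along $\pA(\iota_1)$ and to $\pA(\beta_0)$ along $\pA(\iota_2)$, gives a word in $\pA(\P^1)$ alternating between $\pA(\beta)(z_\ell)$ and $\pA(\beta_0)(z_\ell)$. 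By (\ref{it:pi1deg0:landsK2}), every $\pA(\beta_0)(z_\ell)$ is central, so I can commute all such factors to the right. Using that $\pA(\beta)$ and $\pA(\beta_0)$ are group homomorphisms, the expression collapses to $\pA(\beta)(\gamma)\,\pA(\beta_0)(\gamma)$, as desired.

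For the ``in particular'' clause, I will invoke that the group law on $[\P^1,\P^1] \cong \GW^u(k)$ is induced by the simplicial pinch $c_+$ of Example~\ref{ex:c+}, so that $\beta + \beta_0$ is represented by $(\beta \vee \beta_0) \circ c_+$. Since $c_+$ is a good pinch, the identity follows by taking $c = c_+$ in the first part of (\ref{it:pi1deg0:additive}). The main obstacle I expect is making the ``word'' step rigorous at the level of the sheafification $a$: I will need that, after Nisnevich refinement, every section of $\pA(\bigvee \P^1)$ is in the image of the free product $\pA(\P^1) * \pA(\P^1)$, so that the map $\pA(\beta \vee \beta_0)$ (a morphism of sheaves of groups) can be evaluated term-by-term on such a word, and that the resulting equality of sheaf maps descends through $a$ and can be checked on stalks.
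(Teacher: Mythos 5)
Your part (\ref{it:pi1deg0:landsK2}) matches the paper's argument exactly and is fine: Lemma~\ref{lem:wppabeta} with $m=0$ shows $\wp\circ\pA(\beta_0)$ is trivial, so $\pA(\beta_0)$ lands in $\ker\wp\cong\Kmw_2$, which is central by \eqref{eq:Kmw2pA1Gm_extension}.

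The problem is the step you yourself flag in part (\ref{it:pi1deg0:additive}). Your word argument needs every section of $\pA(\bigvee_{i=1}^2\P^1)\cong a(\pA(\P^1)*\pA(\P^1))$ to be, Nisnevich-locally, a word in the images of $\pA(\iota_1)$ and $\pA(\iota_2)$, i.e.\ that the unit map $\pA(\P^1)*\pA(\P^1)\to a(\pA(\P^1)*\pA(\P^1))$ is an epimorphism of Nisnevich sheaves. But $a$ (Definition~\ref{df:a}) is not Nisnevich sheafification: it is the reflection onto \emph{strongly $\A^1$-invariant} sheaves of groups, a localization-type construction whose sections admit no such stalkwise word description, and there is no reason the unit should be locally surjective. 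This is exactly why sections of $\pA(\bigvee\P^1)$ are described in the paper as ``difficult to control,'' and why the paper's entire strategy (here and in Lemma~\ref{lem:Kr_central}, Proposition~\ref{prop:veefic_factors_2_nil}) works only with universal properties of $a$ rather than with elements of $a(\ast)$. So your reduction to a word rearrangement is a genuine gap, not a routine verification.

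The fix is to run the argument the other way around, on maps \emph{out of} the free product. Using (\ref{it:pi1deg0:landsK2}) and centrality of $\Kmw_2$, the map $\pA(\beta)*\pA(\beta_0):\pA(\P^1)*\pA(\P^1)\to\pA(\P^1)$ factors as the addition map $\pA(\P^1)\times\Kmw_2\xrightarrow{+}\pA(\P^1)$ precomposed with a map to $\pA(\P^1)\times\Kmw_2$; since $\pA(\P^1)\times\Kmw_2$ is strongly $\A^1$-invariant, the latter map extends uniquely through $a(\pA(\P^1)*\pA(\P^1))\cong\pA(\P^1\vee\P^1)$, and by uniqueness of that extension it must equal $\pA(\beta\circ s_1)\times\pA(\beta_0\circ s_2)$ (both agree on the image of the free product). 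Precomposing with $\pA(c)$ and using the good-pinch relations $s_j\circ c\simeq 1_{\P^1}$ then gives $\pA((\beta\vee\beta_0)\circ c)(\gamma)=\pA(\beta)(\gamma)\pA(\beta_0)(\gamma)$ without ever describing sections of the wedge's fundamental group. Your ``in particular'' step (identifying $\beta+\beta_0$ with $(\beta\vee\beta_0)\circ c_+$ via Cazanave) is correct and is what the paper does.
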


\begin{proof}
\eqref{it:pi1deg0:landsK2}: Lemma~\ref{lem:wppabeta} implies that the image of $\pA(\phi\beta^u_0)$ lies in the kernel of $\wp$, which is the image of $\pA(\eta) \cong \Kmw_2$ by \eqref{eq:Kmw2pA1Gm_extension}. 

\eqref{it:pi1deg0:additive}: By Morel's $\A^1$-version of the Seifert--van Kampen Theorem \cite[Theorem 7.12]{Mor12}, the canonical maps $\pA(\iota_j)$ assemble to an isomorphism  
\begin{equation}\label{eq:MVwedgeP1}
 a(*_{i=1}^n \pA(\P^1)) \xrightarrow{\cong} \pA( \bigvee_{i=1}^n \P^1)  
\end{equation} from the initial strongly $\A^1$-invariant sheaf $a(*_{i=1}^n \pA(\P^1))$ on the free product $*_{i=1}^n \pA(\P^1)$ to $\pA( \bigvee_{i=1}^n \P^1)$.

Since $\Kmw_2$ lies in the center of $\pA(\P^1)$, there is an induced addition homomorphism $\pA(\P^1) \times  \Kmw_2 \stackrel{+}{\to} \pA(\P^1)$.

By \eqref{it:pi1deg0:landsK2}, the map $\pA(\phi\beta^u) * \pA(\phi\beta^u_0): \pA(\P^1) * \pA(\P^1) \to \pA(\P^1)$ factors through  $\pA(\P^1) \times  \Kmw_2 \stackrel{+}{\to} \pA(\P^1)$. Since  $\pA(\P^1) \times  \Kmw_2 $ is strongly $\A^1$-invariant \cite [Theorem 1.9, Theorem 3.37]{Mor12}, the resulting map $\pA(\P^1) * \pA(\P^1) \to \pA(\P^1) \times  \Kmw_2$ factors through the canonical map $*_{i=1}^2 \pA(\P^1) \to a(*_{i=1}^2 \pA(\P^1))$. In total, we obtain a map $g:a(*_{i=1}^2 \pA(\P^1)) \to \pA(\P^1) \times  \Kmw_2$ induced by $\pA(\phi\beta^u) * \pA(\phi\beta^u_0)$. Since $c$ is a good pinch map, $g$ is identified with $\pA(\phi\beta^u \circ s_1) \times \pA (\phi\beta^u_0 \circ s_2) $ by the isomorphism \eqref{eq:MVwedgeP1}. It follows that $\pA(\phi\beta^u \vee \phi\beta^u_0) = + \circ ( \pA(\phi\beta^u \circ s_1) \times \pA (\phi\beta^u_0 \circ s_2))$, showing that \[
\pA((\phi\beta^u \vee \phi\beta^u_0) \circ c )(\gamma)= \pA(\phi\beta^u)(\gamma)\pA(\phi\beta^u_0)(\gamma) \] as claimed. By \cite[Proposition 3.23, Corollary 3.10, Theorem 3.22]{Caz12}, we have $(\phi\beta^u\vee\phi\beta^u_0)\circ c_+=\phi(\beta^u+\beta^u_0)$ in $[\P^1,\P^1]$, showing \eqref{it:pi1deg0:additive}.
\end{proof}

\begin{rem}\label{rem:commutators_in_2nil_groups}
For clarity, we include the following remark on $2$-nilpotent groups. Let
\begin{equation}\label{eq:2-nil-extension}
1 \to K \to G \to A \to 1
\end{equation} be a central extension of groups with $K$ and $A$ abelian. Let for all $g_1, g_2$ in G, let $[g_1,g_2] = g_1 g_2 g_1^{-1} g_2^{-1}$ denote the commutator. The commutator determines a group homomorphism $\commu: A \otimes A \to K$ defined by taking $a_1 \otimes a_2$ to $[g_1,g_2]$ where $g_i \mapsto a_i$. 
\end{rem}

For a sheaf of groups $G$, let $G \to G^{\tnil}$ denote the initial map to a sheaf of 2-nilpotent groups and let $G \to G^{\ab}$ denote the abelianization. The canonical map 
\[*_{i=1}^n \pA( \P^1 ) \to \times_{i=1}^n \pA(\P^1)\] factors through $*_{i=1}^n \pA( \P^1 ) \to (*_{i=1}^n \pA( \P^1 ))^{\tnil}$ because $\pA(\P^1)$ is $2$-nilpotent. Since $ \times_{i=1}^n \pA(\P^1)$ is strongly $\A^1$-invariant, we obtain a map $a( (*_{i=1}^n \pA( \P^1 ))^{\tnil}) \to \times_{i=1}^n \pA(\P^1)$. Since $\times_{i=1}^n \pA(\P^1)$ is $2$-nilpotent, we obtain a futher map
\[r:(a( (*_{i=1}^n \pA( \P^1 ))^{\tnil}))^{\tnil}\to \times_{i=1}^n \pA(\P^1).\]
 Let $K$ denote the kernel. We claim $K$ is ``generated by commutators" in the following sense. 

\begin{lem}\label{lem:Kr_central}
The extension
\[
1 \to K \to (a( (*_{i=1}^n \pA( \P^1 ))^{\tnil}))^{\tnil} \stackrel{r}{\longrightarrow} \times_{i=1}^n \pA(\P^1) \to 1
\] is central and $K$ receives a surjection 
\[
\times_{i \neq j} a(\pA( \P^1 )^{\ab}\otimes \pA(\P^1)^{\ab}) \to K
\] 
 given by summing maps $$ a(\pA( \P^1 )\otimes \pA(\P^1)) \to K$$ for $i \neq j$ defined by sending $\gamma_1 \otimes \gamma_2$ in $\pA( \P^1 )(U)\otimes \pA(\P^1)(U)$ to the commutator $$[\pA(\iota_i)(\gamma_1), \pA(\iota_j)(\gamma_2)]$$ in $(a( (*_{i=1}^n \pA( \P^1 ))^{\tnil}))^{\tnil}$. 
\end{lem}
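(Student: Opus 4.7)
The plan is to identify $K$ by first working upstairs in the free product, then descending through $a$ and the second $(\cdot)^{\tnil}$ to $G := (a((*_{i=1}^n \pA(\P^1))^{\tnil}))^{\tnil}$. Classically, the kernel of the canonical surjection $*_{i=1}^n \pA(\P^1) \twoheadrightarrow \times_{i=1}^n \pA(\P^1) =: P$ is normally generated by the cross-commutators $[\pA(\iota_i)(\gamma_1), \pA(\iota_j)(\gamma_2)]$ with $i \neq j$. Passing to the 2-nilpotent quotient $H := (*_{i=1}^n \pA(\P^1))^{\tnil}$ turns every commutator into a central element, so this normal closure collapses to the ordinary subgroup generated by those cross-commutators, yielding a central extension $1 \to K_0 \to H \to P \to 1$ (using that $P$ is itself 2-nilpotent, so the surjection factors through $H$). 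Since $a$ is a left adjoint, it preserves epimorphisms, and a further $(\cdot)^{\tnil}$ keeps surjectivity, so $r : G \twoheadrightarrow P$.

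To identify $K := \ker r$, I would introduce the subgroup $K' \subseteq G$ generated by $[\pA(\iota_i)(\gamma_1), \pA(\iota_j)(\gamma_2)]$ for $i \neq j$. Every such commutator is central in the 2-nilpotent $G$, and each maps to the identity in $P$ because distinct factors of $P$ commute; hence $K'$ is central and contained in $K$. For the reverse inclusion, I would use that $G$ is generated by the images of the $\pA(\iota_i)$, so every $g \in G$ can be rewritten in the normal form $\pA(\iota_1)(\alpha_1) \cdots \pA(\iota_n)(\alpha_n) \cdot k$ with $\alpha_i \in \pA(\P^1)$ and $k \in K'$: any swap of generators from distinct factors introduces a central cross-commutator lying in $K'$, while products of generators from a single factor collapse back into the image of $\pA(\iota_i)$. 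Composing with the retractions $r_j : G \to \pA(\P^1)$ induced by the wedge collapses $s_j$, which satisfy $r_j \circ \pA(\iota_i) = \id$ for $i = j$ and the trivial map for $i \neq j$, reads off $\alpha_j = r_j(g)$. Hence $r(g) = 1$ forces all $\alpha_j = 1$ and $g = k \in K'$, giving $K = K'$ and simultaneously proving centrality.

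For the surjection out of the tensor-product source, I would invoke the bilinearity of commutators in the 2-nilpotent group $G$: $[ab, c] = [a, c][b, c]$ and $[a, bc] = [a, b][a, c]$, each valid whenever one of the commutators is central. For each pair $i \neq j$, the pairing $(\gamma_1, \gamma_2) \mapsto [\pA(\iota_i)(\gamma_1), \pA(\iota_j)(\gamma_2)]$ is therefore bilinear on sections over any $U \in \Sm_k$, its target $K$ is abelian (being central in $G$), and so it descends to $\pA(\P^1)^{\ab} \otimes \pA(\P^1)^{\ab} \to K$. Summing over the pairs $i \neq j$ and applying the universal property of $a$ yields the claimed surjection from $\times_{i \neq j} a(\pA(\P^1)^{\ab} \otimes \pA(\P^1)^{\ab})$ onto $K$.

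The principal obstacle is that $a$ is not exact on sheaves of groups, so the central extension upstairs does not automatically descend to a central extension for $a(H)$. This is precisely why the construction takes $(\cdot)^{\tnil}$ a second time after $a$: whatever non-centrality $a$ introduces is collapsed back into central commutator relations in $G$. The other subtle point is verifying that the tensor-product map extends along the unit to $a(\pA(\P^1)^{\ab} \otimes \pA(\P^1)^{\ab})$; here the rescue is that $K$ sits inside a strongly $\A^1$-invariant enveloping object coming from $a$ of the 2-nilpotent free product, so the factorization is available by adjunction.
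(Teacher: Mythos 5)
There is a genuine gap, and it sits exactly at the step where you descend through $a$. Your pre-$a$ analysis (normal generation of the kernel of $*_{i=1}^n\pA(\P^1)\to\times_{i=1}^n\pA(\P^1)$ by cross-commutators, collapsing to a central subgroup after $(-)^{\tnil}$) is fine and agrees with the paper's starting point. But $a$ is the left adjoint to the inclusion of strongly $\A^1$-invariant sheaves of groups; it is not an element-level construction, and sections of $a(H)(U)$ are not words in sections of $H(U)$. In particular, it is not known, and you do not prove, that $G=(a((*_{i=1}^n\pA(\P^1))^{\tnil}))^{\tnil}$ is generated (sectionwise, or in any form usable for your argument) by the images of the $\pA(\iota_i)$. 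Hence your normal-form rewriting $g=\pA(\iota_1)(\alpha_1)\cdots\pA(\iota_n)(\alpha_n)\,k$ with $k\in K'$, which is a word-combinatorics argument valid in $(*_{i=1}^n\pA(\P^1))^{\tnil}$, does not transfer to $G$; and since both the centrality of $K$ and the surjectivity claim in your write-up rest on the identification $K=K'$ obtained this way, the core of the proof is unsupported. The paper avoids elements entirely: it encodes the extension for an arbitrary sheaf of $2$-nilpotent groups $G$ as a pushout square whose top row is the commutator map $\oplus_{i\neq j}(G^{\ab}\otimes G^{\ab})\to(*_{i=1}^nG)^{\tnil}$, uses that the left adjoint $a$ preserves pushouts and epimorphisms to get the corresponding extension for $a((*_{i=1}^n\pA(\P^1))^{\tnil})$ (with $a(\times_i\pA(\P^1))\cong\times_i\pA(\P^1)$ by strong $\A^1$-invariance), and then maps that extension onto the one for $(a((*)^{\tnil}))^{\tnil}$ to deduce both that $a(\oplus_{i\neq j}(\pA(\P^1)^{\ab}\otimes\pA(\P^1)^{\ab}))\to K$ is surjective and that $K$ is central.

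A second, related gap is your final factorization ``by adjunction'': extending the commutator pairing $\pA(\P^1)^{\ab}\otimes\pA(\P^1)^{\ab}\to K$ along the unit to $a(\pA(\P^1)^{\ab}\otimes\pA(\P^1)^{\ab})$ requires the target to be strongly $\A^1$-invariant, and neither $K$ nor $G$ is known to be so; applying $(-)^{\tnil}$ after $a$ can destroy strong $\A^1$-invariance, which is precisely the subtlety about nilpotence and $\A^1$-invariance the paper flags (cf.\ its reference to Asok--Bachmann--Hopkins and Remark \ref{rem:2nil_quotient_free}). Your ``rescue'' via a strongly $\A^1$-invariant enveloping object is an assertion, not an argument. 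The paper sidesteps this entirely: the map out of $a(\oplus_{i\neq j}(\pA(\P^1)^{\ab}\otimes\pA(\P^1)^{\ab}))$ is obtained by applying the functor $a$ to an existing morphism (functoriality), never by invoking the universal property of $a$ against a possibly non-invariant target. To repair your proof you would either have to establish the generation and invariance facts you are implicitly using, or restructure along the paper's categorical lines.
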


\begin{proof}
For any sheaf of $2$-nilpotent groups $G$, we have a pushout diagram
\begin{equation}\label{eq:central_extension_G2nil}
\begin{tikzcd}
\oplus_{i \neq j} (G^{\ab} \otimes G^{\ab})\arrow[rr]\arrow[d] &&(*_{i =1}^n G)^{\tnil}\arrow[d] \\
1 \arrow[rr]  && \times_{i =1}^n G
\end{tikzcd}   
\end{equation} where the top horizontal row is given by commutators $[\iota_i(-), \iota_j(-)] $ and where the right vertical arrow is an epimorphism inducing a central extension. To see this, note that   $*_{i=1}^n G \to \times_{i=1}^n G$ is an epimorphism, whence so is 
\[
 (*_{i=1}^n G)^{\tnil} \to \times_{i=1}^n G.
 \]
The abelianization of $*_{i=1}^n G$ factors through $\times_{i=1}^n G$, whence we have \[
(*_{i=1}^n G)^{\tnil} \to \times_{i=1}^n G\stackrel{\theta_1} {\to} (*_{i=1}^n G)^{\ab}
\] with $\theta_1$ an epimorphism. It follows that $(*_{i=1}^nG)^{\ab} \cong \times_{i=1}^n G^{\ab}$. The pushout
\begin{equation*}
\begin{tikzcd}
\oplus_{i ,j} (G^{\ab} \otimes G^{\ab})\arrow[rr]\arrow[d] &&(*_{i =1}^n G)^{\tnil}\arrow[d] \\
1 \arrow[rr]  && \times_{i =1}^n G^{\ab}
\end{tikzcd}  
\end{equation*} produces the claimed pushout \eqref{eq:central_extension_G2nil}. 

Since $a$ is a left adjoint, $a$ preserves epimorphisms and pushouts, so we may apply $a$ to \eqref{eq:central_extension_G2nil} with $G = \pA(\P^1)$, and obtain a pushout. Let $K'$ denote the image of the map
\[
a(\oplus_{i \neq j} (\pA(\P^1)^{\ab} \otimes \pA(\P^1)^{\ab})) \to a( (*_{i=1}^n \pA( \P^1 ))^{\tnil})
\] producing the extension
\begin{equation}\label{eq:K'atnil_ext}
1 \to K' \to a( (*_{i=1}^n \pA( \P^1 ))^{\tnil}) \to a(\times_{i=1}^n \pA(\P^1)) \cong \times_{i=1}^n \pA(\P^1) \to 1
\end{equation} where the last isomorphism follows because $\pA(\P^1)$ is strongly $\A^1$-invariant. Moreover $K'$ receives a surjection from $a(\oplus_{i \neq j} (\pA(\P^1)^{\ab} \otimes \pA(\P^1)^{\ab})) $. The extension \eqref{eq:K'atnil_ext} surjects onto the extension
\begin{equation*}
1 \to K \to (a( (*_{i=1}^n \pA( \P^1 ))^{\tnil}))^{\tnil} \to \times_{i=1}^n \pA(\P^1) \to 1,
\end{equation*} whence $a(\oplus_{i \neq j} (\pA(\P^1)^{\ab} \otimes \pA(\P^1)^{\ab})) \to K$ is a surjection as claimed. Since commutators are central in 2-nilpotent extensions, $K$ is central in $(a( (*_{i=1}^n \pA( \P^1 ))^{\tnil}))^{\tnil} $ as claimed.

\end{proof}

\begin{rem}\label{rem:2nil_quotient_free}
   The group sheaf $(a( (*_{i=1}^n \pA( \P^1 ))^{\tnil}))^{\tnil}$ deserves some comment. Let $G = \pA(\P^1)$. The innermost $2$-nilpotent quotient $(*_{i=1}^n G)^{\tnil}$ serves to produce a surjection 
   \[
   \oplus_{i \neq j} (G^{\ab} \otimes G^{\ab} ) \to \ker ((*_{i=1}^n G)^{\tnil} \to \oplus_{i=1}^n G).
   \] 
    Passing to the initial strongly $\A^1$-invariant sheaf $a((*_{i=1}^n G)^{\tnil})$ produces a map $$\pA(\vee_{i=1}^n \P^1) \to a((*_{i=1}^n G)^{\tnil}).$$ The final $2$-nilpotent quotient $(a((*_{i=1}^n G)^{\tnil}))^{\tnil}$ serves to make the quotient map 
   \[(a((*_{i=1}^n G)^{\tnil}))^{\tnil} \to \oplus_{i=1}^n G\]
    the quotient of a central extension. 
\end{rem}

Let $\rho$ denote the canonical map  \[
 \rho: \pA(\bigvee_{i=1}^n \P^1)\cong a (*_{i=1}^n \pA( \P^1 )) \to (a( (*_{i=1}^n \pA( \P^1 ))^{\tnil}))^{\tnil},
 \]
where the isomorphism $\pA(\bigvee_{i=1}^n \P^1)\cong a(*_{i=1}^n\pA(\P^1))$ is Morel's $\A^1$-version of the Seifert--van Kampen theorem (Equation~\ref{eq:MVwedgeP1}). We show that $\pA(\vee_i f_i\circ c)$ factors through $\rho$ when $c$ is a good pinch map.

\begin{prop}\label{prop:veefic_factors_2_nil}
Let $c: \P^1 \to \bigvee_{i=1}^n \P^1$ be a good pinch map. Let $f_i: \P^1 \to \P^1$ be endomorphisms in unstable $\A^1$-homotopy theory for $i = 1,\ldots, n$. Then 
\[
\pA(\vee f_i \circ c): \pA(\P^1) \to \pA(\P^1)
\]
 factors as a composition  
 \[
 \pA(\P^1) \xrightarrow{\rho \circ \pi_1^{\A^1}(c)}  (a( (*_{i=1}^n \pA( \P^1 ))^{\tnil}))^{\tnil} \xrightarrow{\overline{\pA(\vee f_i)}} \pA(\P^1)
 \]
 where $\overline{\pA(\vee f_i)}$ is the unique map such that $\pA(\vee f_i) =\overline{\pA(\vee f_i)} \circ \rho$. 
\end{prop}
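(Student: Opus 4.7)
The plan is to produce $\overline{\pA(\vee f_i)}$ by a short chain of three universal properties and then to observe that $\rho$ is an epimorphism, from which both existence of the factorization and the asserted uniqueness will follow. The hypothesis that $c$ is a good pinch map plays no role in the construction of $\overline{\pA(\vee f_i)}$ itself; it only serves to pin down the meaning of the composition $\rho \circ \pA(c)$ appearing on the left side.

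First I would apply Morel's $\A^1$-Seifert--van Kampen theorem \cite[Theorem 7.12]{Mor12}, which via the inclusions $\iota_j$ identifies $\pA(\bigvee_{i=1}^n\P^1)$ with $a(*_{i=1}^n\pA(\P^1))$. Under this identification, $\pA(\vee f_i) : a(*_{i=1}^n\pA(\P^1)) \to \pA(\P^1)$ is the morphism induced, through the universal property of the free product of group sheaves, by the individual maps $\pA(f_i):\pA(\P^1) \to \pA(\P^1)$.

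Next I would factor the resulting map $*_{i=1}^n\pA(\P^1) \to \pA(\P^1)$ through the three successive quotients that assemble to give $\rho$. Because $\pA(\P^1)$ is $2$-nilpotent---a consequence of the central extension~\eqref{eq:Kmw2pA1Gm_extension} with abelian kernel $\Kmw_2$ and abelian quotient $\G_m$---the map factors uniquely through $(*_{i=1}^n\pA(\P^1))^{\tnil}$. Because $\pA(\P^1)$ is strongly $\A^1$-invariant \cite[Theorem 1.9]{Mor12}, the resulting morphism factors uniquely through $a\bigl((*_{i=1}^n\pA(\P^1))^{\tnil}\bigr)$ by the adjunction defining $a$ (Definition~\ref{df:a}). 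A final application of the $2$-nilpotent universal property produces the desired morphism $\overline{\pA(\vee f_i)} : \bigl(a((*_{i=1}^n\pA(\P^1))^{\tnil})\bigr)^{\tnil} \to \pA(\P^1)$, and by construction its composition with $\rho$ recovers $\pA(\vee f_i)$.

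For uniqueness, I would show that $\rho$ is an epimorphism of Nisnevich sheaves of groups. The innermost $2$-nilpotent quotient map is an epimorphism; applying $a$ preserves this, since $a$ is a left adjoint and so preserves epimorphisms, a fact already invoked in the proof of Lemma~\ref{lem:Kr_central}; the outer $2$-nilpotent quotient map is once again an epimorphism. Composing, $\rho$ is an epimorphism, so any factorization of $\pA(\vee f_i)$ through $\rho$ is unique, which produces the final identity $\pA(\vee f_i \circ c) = \pA(\vee f_i) \circ \pA(c) = \overline{\pA(\vee f_i)} \circ \rho \circ \pA(c)$. The main subtlety to track is that the three universal properties must be applied in the correct order---$2$-nilpotent quotient, then $\A^1$-localization, then $2$-nilpotent quotient---so that at each stage the target $\pA(\P^1)$ has the property being exploited; this is precisely the structure of $\rho$ recorded in Remark~\ref{rem:2nil_quotient_free}.
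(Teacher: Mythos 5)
Your proposal is correct and follows essentially the same route as the paper: identify $\pA(\bigvee_{i=1}^n\P^1)$ with $a(*_{i=1}^n\pA(\P^1))$ via Morel's $\A^1$-Seifert--van Kampen theorem, then factor $*_{i=1}^n\pA(f_i)$ successively through the $2$-nilpotent quotient, the functor $a$, and the outer $2$-nilpotent quotient, using that $\pA(\P^1)$ is $2$-nilpotent and strongly $\A^1$-invariant. Your explicit observation that $\rho$ is an epimorphism (hence the uniqueness of $\overline{\pA(\vee f_i)}$) is a correct and harmless supplement to what the paper leaves implicit in its commutative diagram.
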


\begin{proof}
Note that $\pA(\vee f_i ): \pA( \bigvee_{i=1}^n \P^1) \to \pA(\P^1)$. By Morel's $\A^1$-version of the Seifert--van Kampen Theorem (\cite[Theorem 7.12]{Mor12}, recalled in Equation~\ref{eq:MVwedgeP1}), the map $*_{i=1}^n \pA(\iota_i): *_{i=1}^n \pA (\P^1) \to \pA( \bigvee_{i=1}^n \P^1)$ induces an isomorphism $a(*_{i=1}^n \pA (\P^1) ) \stackrel{\cong}{\to }  \pA( \bigvee_{i=1}^n \P^1)$. 

Under this isomorphism  $\pA(\vee f_i )$ is identified with the map induced by $*_{i=1}^n \pA(f_i): *_{i=1}^n \pA (\P^1) \to \pA(\P^1)$. Since $ \pA(\P^1)$ is $2$-nilpotent, $*_{i=1}^n \pA(f_i)$ factors as
\[
*_{i=1}^n \pA (\P^1) \to (*_{i=1}^n \pA (\P^1))^{\tnil} \to \pA(\P^1).
\] 
 Since $ \pA(\P^1)$ is strongly $\A^1$-invariant, we obtain the factorization
\[
*_{i=1}^n \pA (\P^1) \to a( (*_{i=1}^n \pA (\P^1))^{\tnil}) \to \pA(\P^1),
\] giving the commutative diagram 

\[\begin{tikzcd}
    && \pA\big(\bigvee_{i=1}^n\mb{P}^1\big)\arrow[d,"\cong"]\arrow[drr,bend left=15,"\pA(\vee_i f_i)"]&&\\
   \pA(\mb{P}^1)\arrow[urr,bend left=15,"\pA(c)"]\arrow[drr,bend right=15] && a\big(*_{i=1}^n\pA(\mb{P}^1)\big)\arrow[d]\arrow[rr,"a(*_{i=1}^n\pA(f_i))"] && \pA(\mb{P}^1)\\
    && a( (*_{i=1}^n \pA (\P^1) )^{\tnil})\arrow[urr,bend right=15,'] &&
\end{tikzcd}.\]
Since $\pi_1^{\A^1}(\P^1)$ is $2$-nilpotent, the claimed factorization follows.
\end{proof}

The map $\wp:\pA(\P^1) \to \G_m$ in the central extension \eqref{eq:Kmw2pA1Gm_extension} admits a section 
\[
\theta: \G_m \to \pA(\P^1)
\] coming from the map $\G_m \to \Omega \Sigma \G_m \simeq_{\A^1} \Omega \P^1$. Since $\pA(\P^1)$ is a sheaf of $2$-nilpotent groups, there is an induced map
\begin{equation}\label{eq:commu_def}
\commu: \G_m \otimes \G_m \to \Kmw_2
\end{equation} from the bilinear map $\G_m \times \G_m \to \Kmw_2$ sending $\alpha_1 \otimes \alpha_2$ in $\G_m(U) \otimes \G_m(U)$ to the commutator $[\theta(\alpha_1), \theta(\alpha_2)]$ in $\pA(\P^1)(U)$ for any $U \in \Sm_k$. Compare with Remark~\ref{rem:commutators_in_2nil_groups}.

\begin{lem}\label{overlineveefi_on_K}
Let $f_1,\ldots,f_n: \P^1 \to \P^1$ be endomorphisms of $\P^1$ in unstable $\A^1$-homotopy theory and let $m_i:=\rank\deg^u(f_i)$. The restriction of the morphism $\overline{\pA(\vee f_i)}$ to $K$ fits in the commutative diagram

\begin{equation*}
\begin{tikzcd}
\times_{i \neq j} a(\pA( \P^1 )^{\ab}\otimes \pA(\P^1)^{\ab}) \arrow[rr,twoheadrightarrow] \arrow[d,"\times (\wp \otimes \wp)"] && K \arrow[rr,"\overline{\pA(\vee f_i)}"]&& \pA(\P^1)\\
\times_{i \neq j} \G_m \otimes \G_m \arrow[rr,"\times_{i \neq j} (m_i \otimes m_j)"] &&\times_{i \neq j} \G_m \otimes \G_m \arrow[rr,"\times_{i \neq j} \commu"] && \Kmw_2. \arrow[u]
\end{tikzcd}  
\end{equation*} 
\end{lem}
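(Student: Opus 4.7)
The plan is to verify commutativity after precomposing with the surjection
\[
\sigma : \times_{i\neq j} a(\pA(\P^1)^{\ab} \otimes \pA(\P^1)^{\ab}) \twoheadrightarrow K
\]
from Lemma~\ref{lem:Kr_central}, which reduces the problem to checking the identity on commutators. A section $\gamma_1 \otimes \gamma_2$ of the $(i,j)$-summand (with $i \neq j$) is sent by $\sigma$ to the commutator $\kappa := [\rho(\pA(\iota_i)(\gamma_1)),\, \rho(\pA(\iota_j)(\gamma_2))]$ in $(a((*_{i=1}^n \pA(\P^1))^{\tnil}))^{\tnil}$. It therefore suffices to show that $\overline{\pA(\vee f_i)}(\kappa)$ agrees with the image of $\gamma_1 \otimes \gamma_2$ under the bottom composite $\commu \circ (m_i \otimes m_j) \circ (\wp \otimes \wp)$.

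Because $\overline{\pA(\vee f_i)}$ is a group homomorphism and $\overline{\pA(\vee f_i)} \circ \rho \circ \pA(\iota_\ell) = \pA(f_\ell)$ for each $\ell$ (by Proposition~\ref{prop:veefic_factors_2_nil}), we obtain
\[
\overline{\pA(\vee f_i)}(\kappa) = [\pA(f_i)(\gamma_1),\, \pA(f_j)(\gamma_2)] \in \pA(\P^1).
\]
Since $\Kmw_2$ is central in $\pA(\P^1)$ and $\pA(\P^1)/\Kmw_2 \cong \G_m$ is abelian, this commutator automatically lies in $\Kmw_2$, which is already consistent with the right vertical inclusion in the diagram.

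The core observation is then that using the section $\theta : \G_m \to \pA(\P^1)$, every $a \in \pA(\P^1)$ can be written as $a = \theta(\wp(a)) \cdot k$ with $k \in \Kmw_2$ central; consequently, $[a_1, a_2] = [\theta(\wp(a_1)), \theta(\wp(a_2))] = \commu(\wp(a_1) \otimes \wp(a_2))$ by the definition \eqref{eq:commu_def} of $\commu$. Applying this with $a_\ell = \pA(f_\ell)(\gamma_\ell)$ and invoking Lemma~\ref{lem:wppabeta} to compute $\wp(\pA(f_\ell)(\gamma_\ell)) = m_\ell \cdot \wp(\gamma_\ell)$, we obtain
\[
\overline{\pA(\vee f_i)}(\kappa) = \commu\bigl((m_i \cdot \wp(\gamma_1)) \otimes (m_j \cdot \wp(\gamma_2))\bigr),
\]
which is exactly the image along the bottom composite.

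I do not expect a serious obstacle: the desired commutativity is essentially prepackaged by Lemma~\ref{lem:wppabeta} (relating $\wp$ to the rank) and Lemma~\ref{lem:Kr_central} (describing $K$ via commutators), together with the fact that centrality of $\Kmw_2$ allows one to replace elements by their $\wp$-images inside any commutator. The only point requiring careful bookkeeping is the identity $\overline{\pA(\vee f_i)} \circ \rho \circ \pA(\iota_\ell) = \pA(f_\ell)$, which should follow by unwinding the universal properties used to construct $\rho$ and $\overline{\pA(\vee f_i)}$ in Proposition~\ref{prop:veefic_factors_2_nil}.
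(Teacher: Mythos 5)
Your proposal is correct and follows essentially the same route as the paper's proof: reduce to elementary tensors $\gamma_1\otimes\gamma_2$ in the $(i,j)$ summand, whose image in $K$ is the commutator $[\rho\,\pA(\iota_i)(\gamma_1),\rho\,\pA(\iota_j)(\gamma_2)]$, compute $\overline{\pA(\vee f_i)}$ on it as $[\pA(f_i)(\gamma_1),\pA(f_j)(\gamma_2)]$, and then use centrality of $\Kmw_2$, the definition of $\commu$ via the section $\theta$, and Lemma~\ref{lem:wppabeta} to identify this with $\commu(\wp(\gamma_1)^{m_i}\otimes\wp(\gamma_2)^{m_j})$. The one step you gloss over, and which the paper makes explicit, is why checking on such elementary tensors suffices: both composites land in strongly $\A^1$-invariant sheaves, so by the universal property of $a$ it is enough to compare their precompositions with $\times_{i\neq j}(\pA(\P^1)^{\ab}\otimes\pA(\P^1)^{\ab})\to\times_{i\neq j}a(\pA(\P^1)^{\ab}\otimes\pA(\P^1)^{\ab})$, after which evaluation on generators is legitimate.
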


\begin{proof}
 We must show two maps $\times_{i \neq j} a(\pA( \P^1 )^{\ab}\otimes \pA(\P^1)^{\ab}) \to \pA(\P^1)$ are equal. Since $\pA(\P^1)$ is strongly $\A^1$-invariant, it suffices to show their precompositions with $\times_{i \neq j} (\pA( \P^1 )^{\ab}\otimes \pA(\P^1)^{\ab}) \to \times_{i \neq j} a(\pA( \P^1 )^{\ab}\otimes \pA(\P^1)^{\ab})$ are equal. Note that the product $\times_{i \neq j} (\pA( \P^1 )^{\ab}\otimes \pA(\P^1)^{\ab})$  is canonically isomorphic the the sum
\[\oplus_{i \neq j} (\pA( \P^1 )^{\ab}\otimes \pA(\P^1)^{\ab}).\]
 Fix $U \in \Sm_k$ and $i<j$. For $\gamma_1$ and $\gamma_2$ in $\pA(\P^1)(U)$, the tensor $\gamma_1 \otimes \gamma_2$ determines an element $(\gamma_1 \otimes \gamma_2)_{i,j}$ of $\oplus_{i \neq j} (\pA( \P^1 )^{\ab}\otimes \pA(\P^1)^{\ab})$ in the $(i,j)$th summand which we then map into $K$ producing an element we call $(\gamma_1 \otimes \gamma_2)_{i,j,K}$. Applying $\overline{\pA(\vee f_i)}$, we compute
\[
\overline{\pA(\vee f_i)} (\gamma_1 \otimes \gamma_2)_{i,j,K} = [\pA(f_i)(\gamma_1), \pA(f_j)(\gamma_2)] = \commu(\wp(\gamma_1)^{m_1}, \wp(\gamma_2)^{m_2})
\] where the last equality follows by Lemma~\ref{lem:wppabeta} and the existence of the commutator map $\commu$ given in \eqref{eq:commu_def}. 
\end{proof}

\begin{lem}\label{lem:Deltac12_to_K}
Let $c_1$ and $c_2$ be good pinch maps. Then $(\rho \circ \pA (c_1))(\rho \circ \pA (c_2))^{-1} $ determines a homomorphism of sheaves of groups 
\[
\Delta_{c_1,c_2}:\pA(\P^1) \to K.
\]
\end{lem}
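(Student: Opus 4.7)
The plan is to define $\Delta_{c_1,c_2}$ sectionwise by the formula suggested in the statement, and then verify (i) that the image lies in the kernel $K$, and (ii) that the resulting map is a group homomorphism. Both parts are short once the setup of the preceding lemmas is in hand.

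For brevity, write $G := (a( (*_{i=1}^n \pA( \P^1 ))^{\tnil}))^{\tnil}$ and $\phi_j := \rho \circ \pA(c_j) : \pA(\P^1) \to G$ for $j=1,2$. Both $\phi_1$ and $\phi_2$ are homomorphisms of sheaves of groups, so for any $U \in \Sm_k$ and $\gamma \in \pA(\P^1)(U)$ the section
\[
\Delta_{c_1,c_2}(\gamma) := \phi_1(\gamma)\,\phi_2(\gamma)^{-1} \in G(U)
\]
is well-defined. First I would verify that $\Delta_{c_1,c_2}(\gamma) \in K(U)$, i.e.\ that $r \circ \Delta_{c_1,c_2} = 1$. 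Since $r : G \to \times_{i=1}^n \pA(\P^1)$ is projected to the $j$-th factor by $\pA(s_j)$, we compute $\pA(s_j) \circ r \circ \phi_i = \pA(s_j \circ c_i)$; because $c_i$ is a good pinch map, $s_j \circ c_i \simeq \id_{\P^1}$, so this equals the identity of $\pA(\P^1)$. Hence $r \circ \phi_i$ is the diagonal $\Delta : \pA(\P^1) \to \times_{i=1}^n \pA(\P^1)$ for both $i=1,2$, and consequently $r \circ \Delta_{c_1,c_2}(\gamma) = \Delta(\gamma)\Delta(\gamma)^{-1} = 1$. This factors $\Delta_{c_1,c_2}$ through $K = \ker r$.

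Next I would check that the sectionwise assignment $\gamma \mapsto \phi_1(\gamma)\phi_2(\gamma)^{-1}$ is a group homomorphism. Expanding both sides, this amounts to showing
\[
\phi_2(\gamma)^{-1}\cdot \bigl(\phi_1(\gamma')\phi_2(\gamma')^{-1}\bigr) = \bigl(\phi_1(\gamma')\phi_2(\gamma')^{-1}\bigr)\cdot \phi_2(\gamma)^{-1}
\]
for all $\gamma,\gamma' \in \pA(\P^1)(U)$. The element $\phi_1(\gamma')\phi_2(\gamma')^{-1}$ lies in $K(U)$ by the previous paragraph, and Lemma~\ref{lem:Kr_central} asserts that $K$ is central in $G$, so the required commutation is immediate. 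Naturality in $U$ is clear from the construction, so $\Delta_{c_1,c_2}: \pA(\P^1) \to K$ is a morphism of sheaves of groups, as required.

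The main (minor) obstacle is just ensuring that the centrality statement from Lemma~\ref{lem:Kr_central} is exactly what powers the homomorphism property; no deeper input is needed beyond the good pinch map condition $s_j \circ c_i \simeq \id_{\P^1}$ and the centrality of $K$ in $G$.
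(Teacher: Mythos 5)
Your proposal is correct and follows essentially the same route as the paper: define the map sectionwise, use the good pinch condition $s_j\circ c_i\simeq \id_{\P^1}$ to see that $r\circ(\rho\circ\pA(c_1))$ and $r\circ(\rho\circ\pA(c_2))$ agree (so the quotient lands in $K=\ker r$), and then deduce the homomorphism property from the centrality of $K$ established in Lemma~\ref{lem:Kr_central}. The paper's verification of the homomorphism property is the same centrality manipulation you wrote, just phrased as the identity $g(\gamma_1\gamma_2)h(\gamma_1\gamma_2)^{-1}=(g(\gamma_1)h(\gamma_1)^{-1})(g(\gamma_2)h(\gamma_2)^{-1})$.
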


\begin{proof}
Mapping $\gamma \in \pA(\P^1)(U)$ to $(\rho \circ \pA (c_1))(\gamma)(\rho \circ \pA (c_2))^{-1}(\gamma) $ determines a map of sheaves of sets $$\pA(\P^1) \to (a( (*_{i=1}^n \pA( \P^1 ))^{\tnil}))^{\tnil}.$$ Since $c_1$ and $c_2$ are good pinch maps, the composition of $\pA(c_i)$ with the map $r: (a( (*_{i=1}^n \pA( \P^1 ))^{\tnil}))^{\tnil} \to \times_{i=1}^n \pA(\P^1)$ is $(1,\ldots, 1)$ for $i=1,2$. Thus $r \circ (\rho \circ \pA (c_1)(\rho \circ \pA (c_2))^{-1} ) = 0$. Therefore by Lemma~\ref{lem:Kr_central}, we have a map of sheaves of sets
\[
\Delta_{c_1,c_2} = \rho \circ \pA (c_1)(\rho \circ \pA (c_2))^{-1}: \pA(\P^1) \to K.
\] For notational simplicity let $g= \rho \circ \pA (c_1)$ and $h = \rho \circ \pA (c_2)$. For any $U$ in $\Sm_k$ and $\gamma_1,\gamma_2$ in $\pA(\P^1)(U)$, we have that 
\begin{align*}
g(\gamma_1 \gamma_2) h(\gamma_1 \gamma_2)^{-1} &= 
g(\gamma_1 ) (g(\gamma_2)h(\gamma_2)^{-1})h(\gamma_1)^{-1} \\ &=
(g(\gamma_1)h(\gamma_1)^{-1})(g(\gamma_2)h(\gamma_2)^{-1}),
\end{align*} 
whence $\Delta_{c_1,c_2}$ is a homomorphism as claimed.

\end{proof}

Fix $D = \{ r_1, \ldots, r_n \}$ with $r_i \in k$ and $r_i \neq r_j$. For endomorphisms $f_1,\ldots,f_n$ of $\P^1$, define 
\begin{align*}
    \sum_{D^\alg}(f_1,\ldots, f_n) &:= \phi\big(\bigoplus_{i=1}^n\beta_i,\prod_{i=1}^n d_i\cdot\prod_{i<j}(r_i-r_j)^{2m_im_j}\big)
\end{align*}
where $\deg^u(f_i)=(\beta_i, d_i)$ with $m_i = \rank \beta_i$. When there is no danger of confusion, we will use the abbreviations $\sum_D f_i$ and $\sum_{D^\alg}f_i$. 

For integers $m_1,\ldots, m_n$, let $\beta_{m_1,\ldots,m_n,D}$ denote the element of $\GW^u(k)$ given by $\beta_{m_1,\ldots,m_n,D} = (0, \prod_{i<j}(r_i -r_j)^{2 m_i m_j})$. 

\begin{lem}\label{lem:p1D-pi1Dalg=fiDeltaBeta}
Let $f_1,\ldots,f_n$ be endomorphisms of $\P^1$ in unstable $\A^1$-homotopy theory and let $m_i = \rank \deg^u f_i$. We have an equality
\begin{align}\label{eq:pAD-pADalgf}
 (\overline{\pA(\vee f_i)}\circ \Delta_{\cv_D,c_+})\pA(\phi\beta_{m_1,\ldots,m_n,D}) = \pA(\sum_Df_i)(\pA\sum_{D^\alg}f_i)^{-1}.
\end{align} 
of maps of sheaves of sets
\[
\pA(\P^1) \to \Kmw_2.
\] Moreover, both sides are homomorphisms of sheaves of groups.
\end{lem}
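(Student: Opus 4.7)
The plan is to unpack both sides pointwise into expressions involving the three maps $\pA(\vee f_i\circ \cv_D)$, $\pA(\vee f_i \circ c_+)$, and $\pA(\beta_{m_1,\ldots,m_n,D})$, then match them using the centrality of $\Kmw_2$ inside $\pA(\P^1)$. Throughout I interpret $c_+:\P^1\to\bigvee_{i=1}^n\P^1$ as the iterated simplicial pinch obtained by repeated application of Example~\ref{ex:c+}, so that by Cazanave's theorem the class $\deg^u(\vee f_i\circ c_+)$ realizes the group operation on $\GW^u(k)$.

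First I would unpack the LHS. For $U\in\Sm_k$ and $\gamma\in\pA(\P^1)(U)$, the definition in Lemma~\ref{lem:Deltac12_to_K} reads $\Delta_{\cv_D,c_+}(\gamma)=\rho(\pA(\cv_D)(\gamma))\cdot\rho(\pA(c_+)(\gamma))^{-1}$. Since $\overline{\pA(\vee f_i)}$ is a group homomorphism and $\overline{\pA(\vee f_i)}\circ\rho\circ\pA(c)=\pA(\vee f_i\circ c)$ for any good pinch map $c$ by Proposition~\ref{prop:veefic_factors_2_nil}, this gives $(\overline{\pA(\vee f_i)}\circ\Delta_{\cv_D,c_+})(\gamma)=\pA(\vee f_i\circ\cv_D)(\gamma)\cdot\pA(\vee f_i\circ c_+)(\gamma)^{-1}$.

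Next I would translate to $\bigoplus_D$ and $\bigoplus_{D^\alg}$. By Morel's anabelian theorem and the definition $\bigoplus_D(f_i)=\deg^u(\vee f_i\circ\cv_D)$, one has $\pA(\vee f_i\circ\cv_D)=\pA(\bigoplus_D(f_i))$. Iterating Lemma~\ref{lem:pi1deg0}\eqref{it:pi1deg0:additive} together with Cazanave's identification yields $\deg^u(\vee f_i\circ c_+)=\sum_i\deg^u(f_i)=(\oplus_i\beta_i,\prod_i d_i)$, so that $\bigoplus_{D^\alg}(f_i)=\deg^u(\vee f_i\circ c_+)+\beta_{m_1,\ldots,m_n,D}$ in $\GW^u(k)$. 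Since $\beta_{m_1,\ldots,m_n,D}$ has rank $0$, a further application of Lemma~\ref{lem:pi1deg0}\eqref{it:pi1deg0:additive} then gives $\pA(\bigoplus_{D^\alg}(f_i))(\gamma)=\pA(\vee f_i\circ c_+)(\gamma)\cdot\pA(\beta_{m_1,\ldots,m_n,D})(\gamma)$.

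Combining these identities and using that $\pA(\beta_{m_1,\ldots,m_n,D})(\gamma)$ lies in the central subgroup $\Kmw_2\subset\pA(\P^1)$ (Lemma~\ref{lem:pi1deg0}\eqref{it:pi1deg0:landsK2} together with centrality from~\eqref{eq:Kmw2pA1Gm_extension}) shows that both sides of~\eqref{eq:pAD-pADalgf} rearrange to the same element. For the homomorphism claim, the LHS is a pointwise product of two homomorphisms into the abelian sheaf $\Kmw_2$ and is therefore a homomorphism. For the RHS, the ranks of $\bigoplus_D(f_i)$ and $\bigoplus_{D^\alg}(f_i)$ both equal $\sum_i m_i$, so Lemma~\ref{lem:wppabeta} gives $\wp\circ\pA(\bigoplus_D(f_i))=\wp\circ\pA(\bigoplus_{D^\alg}(f_i))$; hence the pointwise ratio takes values in $\ker\wp=\Kmw_2$, and centrality forces this ratio of homomorphisms to itself be a homomorphism. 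The only real subtlety will be careful bookkeeping in the non-abelian group $\pA(\P^1)$—all the delicate commutations go through precisely because every correction term ($\pA(\beta_{m_1,\ldots,m_n,D})(\gamma)$ and the pointwise ratio of two endomorphisms of equal rank) lies in the central subgroup $\Kmw_2$.
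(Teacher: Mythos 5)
Your proposal is correct and follows essentially the same route as the paper's own proof: unpack both sides pointwise via Proposition~\ref{prop:veefic_factors_2_nil}, use Lemma~\ref{lem:pi1deg0} together with Cazanave's identification of $c_+$ with the group law to write $\pA(\bigoplus_{D^\alg}(f_i))$ as $\pA(\vee f_i\circ c_+)\cdot\pA(\beta_{m_1,\ldots,m_n,D})$, and conclude by centrality of $\Kmw_2$, with the homomorphism claim coming from Lemma~\ref{lem:Deltac12_to_K}. The only deviations are cosmetic: you check that the right-hand side lands in $\Kmw_2$ and is a homomorphism directly via Lemma~\ref{lem:wppabeta}, which uses the (true but unproved) assertion $\rank\bigoplus_D(f_i)=\sum_i m_i$ --- this can be avoided by simply reading those properties off from the already-established equality with the left-hand side, as the paper does --- and your final ``rearrange to the same element'' step elides the same placement-of-the-inverse bookkeeping on the central factor $\pA(\beta_{m_1,\ldots,m_n,D})$ that the paper's ``subtracting'' step does.
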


\begin{proof}
Consider the good pinch maps $c_1 = \cv_D$ and $c_2 = c_+$ of Examples \ref{ex:c+} and \ref{ex:CD}. By definition,
\begin{equation}\label{eq:pADf}
\pA(\sum_D(f_1',\ldots,f_n')) = \pA((\vee f'_i) \circ c_1).
\end{equation} By Proposition \ref{prop:veefic_factors_2_nil}, we have $\pA((\vee f'_i) \circ c_1) = \overline{\pA(\vee f'_i)} \circ( \rho \circ \pA(c_1))$.

By Lemma \ref{lem:pi1deg0} and Proposition \ref{prop:veefic_factors_2_nil}, we have 
\begin{align}\label{eq:pADalgf}
\pA(\sum_{D^\alg}(f_i))) &= \pA((\vee f_i) \circ c_2)\pA(\phi\beta_{m_1,\ldots,m_n,D}) \\ \notag
& =(\overline{\pA(\vee f_i)} \circ( \rho \circ \pA(c_2)) \pA(\phi\beta_{m_1,\ldots,m_n,D}),
\end{align} and the image of $\pA(\phi\beta_{m_1,\ldots,m_n,D})$ lies in $\Kmw_2$. Subtracting \eqref{eq:pADalgf} from \eqref{eq:pADf}, we obtain
\begin{align*}
 &(\overline{\pA(\vee f_i)}\circ \Delta_{c_1,c_2})\pA(\phi\beta_{m_1,\ldots,m_n,D}) =  \pA(\sum_Df_i)(\pA(\sum_{D^\alg}f_i))^{-1}
\end{align*} giving the claimed equality of maps of sheaves of sets. By Lemmas \ref{lem:Deltac12_to_K} and \ref{lem:pi1deg0} the left hand side is a homomorphism of sheaves of groups, showing the claim. 
\end{proof}

\begin{lem}\label{lem:key lemma}
Let $D=\{r_1,\ldots,r_n\}\subset\mb{A}^1_k(k)$ with $r_i \neq r_j$ for $i \neq j$. Suppose we have two $n$-tuples of endomorphisms $f_1,\ldots,f_n\in[\mb{P}^1_k,\mb{P}^1_k]$ and $f_1',\ldots,f_n'\in[\mb{P}^1_k,\mb{P}^1_k]$ such that for each $i$, we have $\rank\deg(f_i)=\rank\deg(f_i').$ If
\[\sum_Df_i=\sum_{D^\alg}f_i,\]
then we have
\[\sum_Df_i'=\sum_{D^\alg}f_i'.\]
\end{lem}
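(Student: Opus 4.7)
The strategy is to apply Morel's anabelian theorem \eqref{MorelThmP1Anabelian} to pass to the $\pA$-side, where the ``difference'' between $\bigoplus_D$ and $\bigoplus_{D^\alg}$ is controlled only by the ranks of the $f_i$.

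First, I would observe that by Morel's anabelian theorem, proving $\bigoplus_D(f_i') = \bigoplus_{D^\alg}(f_i')$ in $\GW^u(k) \cong [\P^1_k,\P^1_k]$ is equivalent to proving the equality of the induced endomorphisms $\pA(\bigoplus_D(f_i')) = \pA(\bigoplus_{D^\alg}(f_i'))$ of $\pA(\P^1)$, or equivalently that the map $\pA(\bigoplus_D(f_i'))\,\pA(\bigoplus_{D^\alg}(f_i'))^{-1}$ is the trivial map from $\pA(\P^1)$ to $\Kmw_2$.

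Next, I would apply Lemma~\ref{lem:p1D-pi1Dalg=fiDeltaBeta} to both $n$-tuples $(f_i)$ and $(f_i')$ to rewrite this difference as
\[
\pA\bigoplus_D(f_i')\,(\pA\bigoplus_{D^\alg}(f_i'))^{-1} = (\overline{\pA(\vee f_i')}\circ \Delta_{\cv_D,c_+})\cdot \pA(\beta_{m_1,\ldots,m_n,D}),
\]
and similarly for $(f_i)$. The key point is that $\pA(\beta_{m_1,\ldots,m_n,D})$ depends only on $D$ and the ranks $m_i$ (not on the particular $f_i$). By Lemma~\ref{lem:Deltac12_to_K}, the homomorphism $\Delta_{\cv_D,c_+}$ has image contained in the central subsheaf $K$, and by Lemma~\ref{overlineveefi_on_K} the restriction of $\overline{\pA(\vee f_i)}$ to $K$ factors through the map $\times_{i\neq j}(m_i\otimes m_j)$ and the commutator pairing $\commu:\G_m\otimes\G_m \to \Kmw_2$. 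Hence $\overline{\pA(\vee f_i)}\circ\Delta_{\cv_D,c_+}$ depends on the $f_i$ only through their ranks. Since $\rank\deg(f_i) = \rank\deg(f_i') = m_i$ by hypothesis, we conclude
\[
\overline{\pA(\vee f_i')}\circ\Delta_{\cv_D,c_+} = \overline{\pA(\vee f_i)}\circ\Delta_{\cv_D,c_+}.
\]

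Combining these observations, the right-hand sides of the Lemma~\ref{lem:p1D-pi1Dalg=fiDeltaBeta} identity agree for the two tuples, so
\[
\pA\bigoplus_D(f_i')\,(\pA\bigoplus_{D^\alg}(f_i'))^{-1} = \pA\bigoplus_D(f_i)\,(\pA\bigoplus_{D^\alg}(f_i))^{-1}.
\]
By the hypothesis $\bigoplus_D(f_i) = \bigoplus_{D^\alg}(f_i)$, the right-hand side is the trivial map to $\Kmw_2$. Therefore the left-hand side is trivial, which by Morel's anabelian theorem gives $\bigoplus_D(f_i') = \bigoplus_{D^\alg}(f_i')$, as desired.

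The main content of the argument is really already isolated in Lemmas~\ref{overlineveefi_on_K} and \ref{lem:p1D-pi1Dalg=fiDeltaBeta}; the only subtle bookkeeping step is verifying that the difference is a homomorphism and that multiplying by $\pA(\beta_{m_1,\ldots,m_n,D})$ on both sides is legitimate (it is, since that term lands in the central $\Kmw_2$, so it cancels from both sides). No new ingredient beyond the preceding lemmas is needed.
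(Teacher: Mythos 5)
Your proposal is correct and follows essentially the same route as the paper's own proof: pass to $\pA$ via Morel's anabelian theorem, use Lemma~\ref{lem:p1D-pi1Dalg=fiDeltaBeta} to express the discrepancy as $(\overline{\pA(\vee f_i)}\circ\Delta_{\cv_D,c_+})\pA(\beta_{m_1,\ldots,m_n,D})$, and invoke Lemmas~\ref{lem:Deltac12_to_K} and~\ref{overlineveefi_on_K} to see this depends only on the ranks, so the hypothesis for $(f_i)$ forces it to vanish for $(f_i')$ as well. No gaps; this matches the paper's argument step for step.
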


\begin{proof}
Morel shows that $\pA$ induces a group isomorphism
\[[\mb{P}^1_k,\mb{P}^1_k]\cong\mr{End}(\pA(\mb{P}^1)(k))\] as recalled in \eqref{MorelThmP1Anabelian}. See \cite[Section 7.3, Remark~7.32]{Mor12}.  Thus, it is enough to show that 
\[
\pA(\sum_Df_i')=\pA(\sum_{D^\alg}f_i')
\]
Consider the good pinch maps $c_1 = \cv_D$ and $c_2 = c_+$ of Examples \ref{ex:c+} and \ref{ex:CD}. By Lemma~\ref{lem:p1D-pi1Dalg=fiDeltaBeta} 
\begin{align}\label{eq:pAD-pADalg}
 &(\overline{\pA(\vee f'_i)}\circ \Delta_{c_1,c_2})\pA(\phi\beta_{m_1,\ldots,m_n,D}) =  \pA(\sum_Df_i')(\pA(\sum_{D^\alg}f_i'))^{-1}.
\end{align} By Lemmas \ref{lem:Deltac12_to_K} and \ref{overlineveefi_on_K},
\[
(\overline{\pA(\vee f'_i)}\circ \Delta_{c_1,c_2})\pA(\phi\beta_{m_1,\ldots,m_n,D}) = (\overline{\pA(\vee f_i)}\circ \Delta_{c_1,c_2})\pA(\phi\beta_{m_1,\ldots,m_n,D})
\] because $f_i$ also has the rank of its degree equal to $m_i$. Since the equality \eqref{eq:pAD-pADalg} holds with the $f_i$ replacing the $f_i'$, and by hypothesis
\[
\pA(\sum_Df_i) = (\pA(\sum_{D^\alg}f_i)),
\] we must have that 
\[
(\overline{\pA(\vee f'_i)}\circ \Delta_{c_1,c_2})\pA(\phi\beta_{m_1,\ldots,m_n,D}) =0
\] is the trivial map to the identity element in $\Kmw_2$. By \eqref{eq:pAD-pADalg}, it follows that \[
\pA(\sum_Df_i') = \pA(\sum_{D^\alg}f_i'),
\] completing the proof.
\end{proof}

\begin{cor}\label{cor:d=dalg for positive ranks}
Let $D=\{r_1,\ldots,r_n\}\subset\mb{A}^1_k(k)$. Let \[f_1,\ldots,f_n\in[\mb{P}^1_k,\mb{P}^1_k]\]
satisfy $\rank\deg(f_i)>0$ for all $i$. Then 
\[\sum_D(f_1,\ldots, f_n)=\sum_{D^\alg}(f_1,\ldots,f_n).
\]
\end{cor}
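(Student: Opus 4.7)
The strategy is to feed Lemma~\ref{lem:key lemma} a suitable base case, using Proposition~\ref{prop:local-to-global-rational-function} (i.e. Theorem~\ref{thm:D-sum-local}) as the source of that base case. Concretely, given an $n$-tuple $(f_1,\ldots,f_n)$ with ranks $m_i := \rank\deg(f_i) > 0$, I would produce, for the same rank data, a second $n$-tuple for which the equality $\bigoplus_D = \bigoplus_{D^{\alg}}$ is already established, and then transport the conclusion across Lemma~\ref{lem:key lemma}.

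The base case I plan to use is the one coming from an actual rational function with the prescribed vanishing profile. Since each $m_i$ is a positive integer, set
\[
f(x) := \prod_{i=1}^n (x-r_i)^{m_i},
\]
viewed as a pointed rational function $f/1:\mb{P}^1_k\to\mb{P}^1_k$ (so $f(\infty)=\infty$), whose vanishing locus is exactly $D$. Let $f'_i := \bar{f}_{r_i}\circ c_{r_i}$ be its homotopical local degrees. By the homotopical local-to-global principle (Theorem~\ref{thm:local-global homotopic}), $f = \vee_i f'_i \circ \cv_D$ in $\H(k)$; applying $\deg^u$ gives
\[
\deg^u(f) \;=\; \bigoplus_D(f'_1,\ldots,f'_n).
\]
Meanwhile, Proposition~\ref{prop:local-to-global-rational-function} yields
\[
\deg^u(f) \;=\; \bigoplus_{D^{\alg}}(f'_1,\ldots,f'_n),
\]
so the desired equality holds for the tuple $(f'_1,\ldots,f'_n)$.

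To apply Lemma~\ref{lem:key lemma}, I still need $\rank\deg(f'_i) = m_i$ for each $i$. This follows from Lemma~\ref{lem:degree=newton matrix}: the local Newton matrix $\Nwt_{r_i}(f)$ is an $m_i\times m_i$ nondegenerate matrix (its anti-diagonal entries are $A_{r_i,m_i}\in k^\times$), so $\rank\deg^u_{r_i}(f) = m_i$. Thus the tuples $(f_1,\ldots,f_n)$ and $(f'_1,\ldots,f'_n)$ have matching rank sequences.

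Finally, invoking Lemma~\ref{lem:key lemma} with $(f'_1,\ldots,f'_n)$ as the tuple for which $\bigoplus_D = \bigoplus_{D^{\alg}}$ is known, we conclude the same equality for $(f_1,\ldots,f_n)$, proving the corollary. I don't anticipate any genuine obstacle here: all the heavy lifting (the anabelian/$2$-nilpotent argument) is packaged inside Lemma~\ref{lem:key lemma}, and the only nontrivial input is the existence of a rational function realizing any prescribed positive rank profile on $D$, which is immediate from Proposition~\ref{prop:local-to-global-rational-function} applied to the polynomial $\prod_i(x-r_i)^{m_i}$. The positivity hypothesis $m_i>0$ is used precisely to ensure that such a polynomial with the correct vanishing multiplicities exists; this is what forces the restriction in the corollary and motivates the separate downward-induction argument (via Lemma~\ref{lem:f_i0+1Delta-fDelta}) needed to reach the full Theorem~\ref{thm:D-sum-all-f_i}.
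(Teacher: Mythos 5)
Your proposal is correct and follows essentially the same route as the paper: both reduce to Lemma~\ref{lem:key lemma} by exhibiting the polynomial $f=\prod_{i=1}^n(x-r_i)^{m_i}$, whose tuple of homotopical local degrees realizes the given rank profile (via Lemma~\ref{lem:degree=newton matrix}) and satisfies $\bigoplus_D=\bigoplus_{D^\alg}$ by Theorem~\ref{thm:local-global homotopic} together with Proposition~\ref{prop:local-to-global-rational-function}. Your write-up merely spells out more explicitly the two identifications of $\deg^u(f)$ that the paper leaves implicit in its citation of Proposition~\ref{prop:local-to-global-rational-function}.
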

\begin{proof}
Let $m_i:=\rank\deg(f_i)$ for each $i$. By Lemma~\ref{lem:key lemma} and Proposition~\ref{prop:local-to-global-rational-function}, it suffices to construct a pointed rational map $f:\mb{P}^1_k\to\mb{P}^1_k$ with vanishing locus $D$ such that $\rank\deg^u_{r_i}(f)=m_i$ for each $i$. The map $f:=\prod_{i=1}^n(x-r_i)^{m_i}$ satisfies these criteria, e.g.~by Lemma~\ref{lem:degree=newton matrix}.
\end{proof}

Let $\delta_{ij}$ denote the Kronecker delta 
\[
\delta_{ij} = \begin{cases}
  1  & i=j \\
  0 & i \neq j.
\end{cases}
\] By a slight abuse of notation, we let $\delta_{ij}\langle 1 \rangle^u$ also denote an endomorphism of $\P^1$ in unstable $\A^1$-homotopy theory with this given degree.

\begin{lem}\label{lem:f_i0+1Delta-fDelta}
Let $c_1$ and $c_2$ be good pinch maps. Fix $i_0 \in \{ 1,\ldots, n\}$. Suppose we have two collections of endomorphisms
\begin{align*}
    f_1,\ldots,f_n&\in[\mb{P}^1_k,\mb{P}^1_k],\\
    f_1',\ldots,f_n'&\in[\mb{P}^1_k,\mb{P}^1_k]
\end{align*}
such that $\rank\deg(f_i)=\rank\deg(f_i')$ for each $i \neq i_0$. Then 
\begin{align*}
(\overline{\pA(\vee (f_i + \delta_{i_0 i} \langle 1 \rangle^u))}\circ \Delta_{c_1,c_2}) - (\overline{\pA(\vee (f_i ))}\circ \Delta_{c_1,c_2}) &=\\
(\overline{\pA(\vee (f'_i + \delta_{i_0 i} \langle 1 \rangle^u))}\circ \Delta_{c_1,c_2}) - (\overline{\pA(\vee (f'_i ))}\circ \Delta_{c_1,c_2})&
\end{align*}
are equal maps
\[
\pA(\P^1) \to \Kmw_2.
\]
\end{lem}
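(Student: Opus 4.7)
The plan is to isolate the dependence on $m_{i_0}:=\rank\deg^u(f_{i_0})$ using the structure theorem Lemma~\ref{overlineveefi_on_K} for the restriction $\overline{\pA(\vee f_i)}\big|_K$. By Lemma~\ref{lem:Deltac12_to_K}, $\Delta_{c_1,c_2}$ lands in $K$, so each of the four composites appearing in the claim is determined by the restriction of $\overline{\pA(\vee(-))}$ to $K$, which by Lemma~\ref{overlineveefi_on_K} lands in the abelian subsheaf $\Kmw_2 \subset \pA(\P^1)$; in particular each composite itself lands in $\Kmw_2$, making the differences well-defined. It therefore suffices to prove equality of two homomorphisms $K\to\Kmw_2$. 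Since the map $\times_{i\neq j}a(\pA(\P^1)^{\ab}\otimes\pA(\P^1)^{\ab})\twoheadrightarrow K$ of Lemma~\ref{lem:Kr_central} is an epimorphism and $\Kmw_2$ is strongly $\A^1$-invariant, it further suffices to verify this equality summand by summand after precomposition with $\oplus_{i\neq j}(\pA(\P^1)^{\ab}\otimes\pA(\P^1)^{\ab})\to\times_{i\neq j}a(\pA(\P^1)^{\ab}\otimes\pA(\P^1)^{\ab})$.

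Writing $m_i:=\rank\deg^u(f_i)$ and $m_i':=\rank\deg^u(f_i')$, Lemma~\ref{overlineveefi_on_K} identifies the pullback of $\overline{\pA(\vee f_i)}\big|_K$ to the $(i,j)$-summand with $\commu\circ((m_i\wp)\otimes(m_j\wp))=m_im_j\cdot(\commu\circ(\wp\otimes\wp))$. Replacing $f_i$ by $f_i+\delta_{i_0 i}\langle 1\rangle^u$ sends $m_i$ to $m_i+\delta_{i_0 i}$ (by additivity of rank under the group operation on $[\P^1,\P^1]$), so the corresponding difference on the $(i,j)$-summand is
\[
\bigl((m_i+\delta_{i_0 i})(m_j+\delta_{i_0 j})-m_im_j\bigr)\cdot\bigl(\commu\circ(\wp\otimes\wp)\bigr)=\bigl(m_i\delta_{i_0 j}+\delta_{i_0 i}m_j\bigr)\cdot\bigl(\commu\circ(\wp\otimes\wp)\bigr),
\]
using $\delta_{i_0 i}\delta_{i_0 j}=0$ for $i\neq j$. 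The coefficient $m_i\delta_{i_0 j}+\delta_{i_0 i}m_j$ vanishes unless exactly one of $i,j$ equals $i_0$; in that case it equals $m_k$ for some $k\neq i_0$. In particular, the difference depends on the ranks only through the values $\{m_k:k\neq i_0\}$.

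Since by hypothesis $m_k=m_k'$ for every $k\neq i_0$, the identical formula computes the analogous difference for the primed collection. The two differences therefore agree on each summand, hence on $K$ after summing and descending along the epimorphism of Lemma~\ref{lem:Kr_central}, hence as maps $\pA(\P^1)\to\Kmw_2$ after composing with $\Delta_{c_1,c_2}$. The one technical point that requires care is the justification that $\overline{\pA(\vee(-))}\circ\Delta_{c_1,c_2}$ takes values in the abelian sheaf $\Kmw_2$ and is a homomorphism of sheaves of groups; this is precisely the content of the bottom row of the diagram in Lemma~\ref{overlineveefi_on_K} together with Lemma~\ref{lem:Deltac12_to_K}, and once secured the remaining argument is a routine bilinear computation.
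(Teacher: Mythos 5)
Your proposal is correct and takes essentially the same route as the paper: compose with $\Delta_{c_1,c_2}$ landing in $K$ (Lemma~\ref{lem:Deltac12_to_K}), pull back along the surjection of Lemma~\ref{lem:Kr_central}, and use the rank formula of Lemma~\ref{overlineveefi_on_K} to see that the difference produced by adding $\langle 1\rangle^u$ in slot $i_0$ involves only the ranks $m_k$ with $k\neq i_0$, which agree for the two collections. Your summand-by-summand coefficient $m_i\delta_{i_0 j}+\delta_{i_0 i}m_j$ even tracks the change in both tensor slots (the pairs $(i_0,j)$ and $(i,i_0)$), which the paper's displayed identity records more tersely.
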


\begin{proof}
As above in Lemma~\ref{lem:wppabeta}, for an integer $m$, let $m: \G_m \to \G_m$ denote the map $z \mapsto z^m$. Consider maps
\[
m_i \otimes m_j: \G_m \otimes \G_m \to \G_m \otimes \G_m
\]  where $m_i$ and $m_j$ are integers. Then there is an equality $(m_i + 1) \otimes m_j = m_i \otimes m_j + 1 \otimes m_j$ of such maps. Let $m_i = \rank\deg(f_i)$ for $i=1,\ldots,n$. Then we have an equality 
\[
\times_{i \neq j} ((m_i + \delta_{i_0 i} )\otimes m_j) = \times_{i \neq j} (m_i \otimes m_j) + \times_{j = 1, j \neq i_0}^n 1 \otimes m_j
\] of maps
\[
\times_{i \neq j} \G_m \otimes \G_m \to \times_{i \neq j} \G_m \otimes \G_m
\]

Let $\sigma: \times_{i \neq j} a(\pA(\P^1)^{\ab} \otimes \pA(\P^1)^{\ab}) \to K$ denote the epimorphism of Lemma~\ref{lem:Kr_central}. By Lemma \ref{overlineveefi_on_K}, it follows that
\begin{equation}\label{eq:veef_i+1_restricted_K}
    \overline{\pA(\vee_i (f_i + \delta_{i_0 i} \langle 1 \rangle^u) )} \circ \sigma = \overline{\pA(\vee_i f_i)}\circ \sigma + (\times_{i \neq j} \commu) \circ \times_{j = 1, j \neq i_0}^n (1 \otimes m_j) \circ \times_{j = 1, j \neq i_0}^n(\wp \otimes \wp)
\end{equation}

Note that 
\[
(\times_{i \neq j} \commu) \circ \times_{j = 1, j \neq i_0}^n (1 \otimes m_j) \circ \times_{j = 1, j \neq i_0}^n(\wp \otimes \wp)
\] only depends on $m_1,\ldots, m_{i_0 -1}, m_{i_0 + 1}, \ldots, m_n$ and in particular is independent of $m_{i_0}$. By Lemma~\ref{lem:Deltac12_to_K}  $\Delta_{c_1,c_2}$ determines a homomorphism
\begin{equation*}
\Delta_{c_1,c_2}: \pA(\P^1) \to K
\end{equation*} By Equation~\eqref{eq:veef_i+1_restricted_K} and the fact that $\sigma$ is an epimorphism, it follows that 
\[
\overline{\pA(\vee_i (f_i + \delta_{i_0 i} \langle 1 \rangle^u) )} \circ \Delta_{c_1,c_2} - \overline{\pA(\vee_i f_i )} \circ \Delta_{c_1,c_2} 
\] only depends on $m_1,\ldots, m_{i_0 -1}, m_{i_0 + 1}, \ldots, m_n$.
\end{proof}

\begin{proof}[Proof of Theorem~\ref{thm:D-sum-all-f_i}]
 We prove that $\pA(\sum_Df_i)=\pA(\sum_{D^\alg}f_i)$ which is sufficient by Morel's theorem that $\P^1$ is $\A^1$-anabelian \cite[Section 7.3, Remark~7.32]{Mor12} as recalled in \eqref{MorelThmP1Anabelian}. By Corollary~\ref{cor:d=dalg for positive ranks}, we have $\sum_Df_i=\sum_{D^\alg}f_i$ whenever each $\deg^u(f_i)$ has positive rank. For the inductive hypothesis, assume that we have $(m_1,\ldots,m_n)\in\mb{Z}^n$ such that $\sum_Df_i=\sum_{D^\alg}f_i$ whenever $\rank\deg^u(f_i)\geq m_i$ for each $i$.

We wish to show that if $f_1,\ldots,f_n\in[\mb{P}^1_k,\mb{P}^1_k]$ satisfy $\rank\deg^u(f_{i_0})=m_{i_0}-1$ for some $1\leq i_0\leq n$ and $\rank\deg^u(f_i)=m_i$ for $i\neq i_0$, then $\sum_Df_i=\sum_{D^\alg}f_i$. Let $g_{i_0}\in[\mb{P}^1_k,\mb{P}^1_k]$ be an endomorphism such that $\deg^u(g_{i_0})=\deg^u(f_{i_0})+\langle 1\rangle^u$, which exists since $\GW^u(k)\cong[\mb{P}^1_k,\mb{P}^1_k]$. Let $g_i:=f_i$ for all $i\neq i_0$. By our inductive hypothesis, we have
\begin{equation}\label{eq:Dg=Dalg_g}
\sum_Dg_i=\sum_{D^\alg}g_i.
\end{equation}
By Lemma~\ref{lem:p1D-pi1Dalg=fiDeltaBeta}, we have that
\begin{equation}\label{g_ibeta=0}
(\overline{\pA(\vee g_i)} \circ \Delta_{\cv_D,c_+})\pA(\phi\beta_{m_1,\ldots,m_{i_0}+1,\ldots,m_n,D}) = 0.
\end{equation} 
 By the same reasoning
\begin{equation}\label{g_i+1beta=0}
(\overline{\pA(\vee (g_i + \delta_{i i_0} \langle 1 \rangle^u))}\circ \Delta_{\cv_D,c_+})\pA(\phi\beta_{m_1,\ldots,m_{i_0}+2,\ldots,m_n,D}) = 0.
\end{equation} 
 By Lemma \ref{lem:f_i0+1Delta-fDelta}, for all $n$-tuples $f_1',\ldots,f_n'$ with $\rank f_i' = m_i$ for $i \neq i_0$, we have
\begin{align*}
(\overline{\pA(\vee (f'_i + \delta_{i_0 i} \langle 1 \rangle^u))}\circ \Delta_{c_1,c_2}) - (\overline{\pA(\vee (f'_i ))}\circ \Delta_{c_1,c_2})& =
\\(\overline{\pA(\vee (g_i + \delta_{i_0 i} \langle 1 \rangle^u))}\circ \Delta_{c_1,c_2}) - (\overline{\pA(\vee (g_i ))}\circ \Delta_{c_1,c_2}).
\end{align*}

By Equations \eqref{g_ibeta=0} and \eqref{g_i+1beta=0},
\begin{align*}
(\overline{\pA(\vee (g_i + \delta_{i_0 i} \langle 1 \rangle^u))}\circ \Delta_{c_1,c_2}) - (\overline{\pA(\vee (g_i ))}\circ \Delta_{c_1,c_2})&= 
\\ - \pA(\phi\beta_{m_1,\ldots,m_{i_0}+2,\ldots,m_n,D}) + \pA(\phi\beta_{m_1,\ldots,m_{i_0}+1,\ldots,m_n,D})&.
\end{align*}

By Lemma \ref{lem:pi1deg0},
\begin{align*}
-\pA(\phi\beta_{m_1,\ldots,m_{i_0}+2,\ldots,m_n,D}) + \pA(\phi\beta_{m_1,\ldots,m_{i_0}+1,\ldots,m_n,D})&= \\
\pA(-\phi\beta_{m_1,\ldots,m_{i_0}+2,\ldots,m_n,D} + \phi\beta_{m_1,\ldots,m_{i_0}+1,\ldots,m_n,D})&.
\end{align*} 

By direct calculation, 
\begin{align}\label{eq:rscalc}
-\beta_{m_1,\ldots,m_{i_0}+2,\ldots,m_n,D} + \beta_{m_1,\ldots,m_{i_0}+1,\ldots,m_n,D} = \\ \notag  
\frac{\prod_{\substack{i \neq j \\ i,j \neq i_0 }} (r_i - r_j)^{2 m_i m_j} \prod_{j \neq i_0} (r_{i_0}-r_j)^{2 (m_{i_0} + 1) m_j}}{\prod_{\substack{i \neq j \\ i,j \neq i_0 }} (r_i - r_j)^{2 m_i m_j}\prod_{j \neq i_0} (r_{i_0}-r_j)^{2 (m_{i_0} + 2 ) m_j}} = \\ \notag
\prod_{j \neq i_0} (r_{i_0}-r_j)^{-2 m_j} = \\ \notag
-\beta_{m_1,\ldots,m_{i_0}+1,\ldots,m_n,D} + \beta_{m_1,\ldots,m_{i_0},\ldots,m_n,D}
\end{align} 

Combining, we see that for all $n$-tuples $f_1',\ldots,f_n'$ with $\rank f_i' = m_i$ for $i \neq i_0$, we have
\begin{align}\label{eq:f'_i+1-f_i'=r}
(\overline{\pA(\vee (f'_i + \delta_{i_0 i} \langle 1 \rangle^u))}\circ \Delta_{c_1,c_2}) - (\overline{\pA(\vee f'_i )}\circ \Delta_{c_1,c_2}) = \\ \notag
\pA((0,\prod_{j \neq i_0} (r_{i_0}-r_j)^{-2 m_j})).
\end{align}

Now take $f_i' = f_i$ . Then $f_i' + \delta_{i_0 i} \langle 1 \rangle^u = g_i$. By Equations~\eqref{g_ibeta=0} and \eqref{eq:f'_i+1-f_i'=r}, 
\begin{align*}
(\overline{\pA(\vee (f_i ))}\circ \Delta_{c_1,c_2})& = 
-\pA(\phi\beta_{m_1,\ldots,m_{i_0}+1,\ldots,m_n,D})-\pA((0,\prod_{j \neq i_0} (r_{i_0}-r_j)^{-2 m_j})) 
\\&= -\pA(\phi\beta_{m_1,\ldots,m_{i_0},\ldots,m_n,D})
\end{align*} where the last equality follows from \eqref{eq:rscalc} and Lemma \ref{lem:pi1deg0}. 
By Lemma~\ref{lem:p1D-pi1Dalg=fiDeltaBeta}, we have
\[
\pA(\sum_Df_i)=\pA(\sum_{D^\alg}f_i)
\] as desired.
\end{proof}

\appendix
\section{Code}\label{sec:code}
Here is some code that calculates duplicants and the square root of the ordinary discriminant. We used this code to conjecture a closed formula for the duplicant, which we then proved in Theorem~\ref{thm:duplicant}.

\begin{lstlisting}
def coefficients(f,N):
    # deal with constant term
    coeffs = [f.subs(x=0)]
    # append other coefficients
    for i in range(1,N):
        coeffs.append(f.coefficient(x^i))
    return(coeffs)

def vand(n): # compute sqrt(disc(r0,...,r(n-1)))
    r = var('r',n=n)
    factors = []
    for i in range(n):
        for j in range(i+1,n):
            factors.append(r[i]-r[j])
    return(prod(factors))

def dupl(n,e): # compute duplicant
    x = var('x')
    r = var('r',n=n)
    N = sum(e)
    coeff_list = []
    
    for i in range(n):
        for j in range(1,e[i]+1):
            # generate f/(x-r_i)^j
            e_new = e.copy()
            e_new[i] = e[i]-j
            f = prod([(x-r[l])^e_new[l]\
                      for l in range(n)]).expand()
            coeff_list.append(coefficients(f,N))
            
    # compute det^2 of coefficient matrix
    coeff_matrix = matrix(coeff_list)
    return(coeff_matrix.det()^2)
\end{lstlisting}

\bibliography{unstable-degree}{}
\bibliographystyle{alpha}
\end{document}